\tikzstyle{vertex}=[circle, draw, inner sep=0pt, minimum size=4pt]
\newcommand{\vertex}{\node[vertex]}
\newtheorem{theorem}{Theorem}[section]
\newtheorem{lemma}[theorem]{Lemma}
\newtheorem{proposition}[theorem]{Proposition}
\newtheorem{corollary}[theorem]{Corollary}
\theoremstyle{definition}
\newtheorem{definition}[theorem]{Definition}
\newtheorem{example}[theorem]{Example}
\newtheorem{remark}[theorem]{Remark}
\newcommand\Z{\mathbb{Z}}
\newcommand\R{\mathbb{R}}
\DeclareMathOperator{\vol}{vol}
\renewcommand{\b}{\beta}
\renewcommand{\phi}{\varphi}
\newcommand{\ra}{\rightarrow}
\newcommand{\F}{\mathcal{F}} 
\renewcommand{\a}{\mathbf{a}}
\newcommand{\outd}{\mathrm{outd}}
\newcommand{\ind}{\mathrm{ind}}
\newcommand{\w}{\mathbf{w}}
\title{Volume inequalities for flow polytopes of full directed acyclic graphs}
\author{Benjamin Braun}
\address{Department of Mathematics\\
         University of Kentucky\\
\url{https://sites.google.com/view/braunmath/}}
\email{benjamin.braun@uky.edu}
\author{James Ford McElroy}
\address{Department of Mathematics\\
         University of Kentucky\\
\url{https://math.as.uky.edu/users/jfmc230}}
\email{jfmcelroy@uky.edu}
\date{28 May 2024}
\thanks{Both authors were partially supported by the US National Science Foundation award DMS-1953785.}
\begin{document}

\begin{abstract}
  Given a finite directed acyclic graph, the space of non-negative unit flows is a lattice polytope called the flow polytope of the graph.
  We consider the volumes of flow polytopes for directed acyclic graphs on $n+1$ vertices with a fixed degree sequence, with a focus on graphs having in- and out-degree two on every internal vertex.
  When the out-degree of the source is three and the number of vertices is fixed, we prove that there is an interchange operation on the edge set of these graphs that induces a partial order on the graphs isomorphic to a Boolean algebra.
  Further, we prove that as we move up through this partial order, the volumes of the corresponding flow polytopes weakly decrease.
  Finally, we show that each such graph is strongly planar and we provide an alternative interpretation of our results in the context of linear extensions for posets that are bipartite non-crossing trees.
\end{abstract}

\maketitle

\section{Introduction} \label{sec:intro}

Given a finite directed acyclic graph (DAG) $G$, the space of non-negative flows on $G$ of strength one is a lattice polytope denoted by $\F_1(G)$.
While flow polytopes for DAGs can be defined for arbitrary netflow vectors, strength one flows that conserve flow at inner vertices have remained an important and central object of study in geometric and algebraic combinatorics~\cite{baldoni-vergne,caracolvolume,combflowmodel,DKK,generalpitmanstanley,cyclicorderflow,integerpointsflow,lidskii,gtflow,flowdiagonalharmonics,meszaros-morales,meszaros-morales-striker,morrisidentityflow,vonbell2024triangulations,framedtriangulations}.
A central problem is to determine the (normalized) volume of $\F_1(G)$, and to relate volumes of flow polytopes to structural qualities of the DAGs in question.

In this work, we consider the following general problem.
Consider the family of DAGs on a fixed number of vertices with fixed in- and out-degree sequences.
Which DAGs in this family maximize or minimize flow polytope volume?
What relationships, if any, are there between the DAGs in this family and their flow polytope volumes?
Note that if a DAG has a vertex with either in-degree or out-degree equal to $1$, then contracting the single corresponding edge yields a DAG with an integrally equivalent flow polytope.
Thus, the first non-trivial case to consider is when the in- and out-degree sequence for inner (i.e., non-sink and non-source) vertices is $(2,\ldots,2)$.
When the degree of the source and sink are both $2$, this family consists of only one DAG on $n+1$ vertices, with corresponding flow polytope given by a unit cube.

In this paper, we consider the DAGs having source vertex and sink vertex both of degree $3$, where all other in- and out-degrees are $2$. 
We show that DAGs satisfying this condition are in bijection with binary strings.
Further, crossing nested edges in such a DAG corresponds to changing $0$s and $1$s in the associated binary string.
The main result of this work is to show that the operation of crossing nested edges weakly decreases the volume of the associated flow polytopes; thus, there is a Boolean algebra structure on the set of DAGs on $n+1$ vertices with this degree condition where the volumes of flow polytopes are reverse-ordered from the partial order on the Boolean algebra.

Further, the DAGs we consider are planar.
Due to M\'{e}sz\'{a}ros, Morales, and Striker~\cite{meszaros-morales-striker}, any result about volumes of flow polytopes arising from planar DAGs (along with some other conditions) can be translated to a result about linear extensions of posets arising from the duals of the DAGs.
Thus, our results regarding volume inequalities for flow polytopes translate into inequalities for linear extensions of posets.

The remainder of this paper is structured as follows.
In Section~\ref{sec:background}, we provide background regarding flow polytopes and their volumes.
We prove in Section~\ref{sec:boolean} results regarding Boolean algebra poset structures on our families of DAGs of interest.
In Section~\ref{sec:volume}, we introduce the tool of flow decomposition trees in order to prove Theorem~\ref{thm:MainThm}, which states that the values of the volumes of flow polytopes for DAGs under the Boolean partial order obtained in Corollary~\ref{cor:3coverage} respect the dual partial order.
Finally, in Section~\ref{sec:LinearExtensions} we recast our results about flow polytopes in the context of linear extensions for posets.

\section{Background} \label{sec:background}

Let $G=(V,E)$ be a directed acyclic graph (DAG) on vertices $V=[n+1]$, such that every edge $(i,j)$ in $G$ satisfies that $i<j$, and such that $G$ has a single source $1$ and a single sink $n+1$.
We use the notation $\a=(a_1,\ldots,a_n,-\sum_i a_i)\in \Z_{\geq0}^{n+1}$ for a netflow vector, we denote the (multi)set $S_G:=\{e_i-e_j \ |\ (i,j)\in E\}$, and we denote by $M_G$ the matrix with columns given by the vectors of $S_G$. 

\begin{definition}\label{def:flowpolytope}
  Given a DAG $G$ and netflow vector $\a$, we define the \emph{flow polytope} of $G$ to be
  \[
    \F_G(\a) := \{f_G\in\R^{E}_{\geq0} \ |\ M_G f_G = \a\} \, .
  \]
  \end{definition}
We think of $f_G$ as a flow on $G$, that is, an assignment of nonnegative flow values to each edge satisfying the netflow requirements at each vertex given by the netflow vector $\a$. 
An important choice of netflow vector is $\a=(1,0,\ldots,0,-1)$, and we use the special symbol
\[ 
\F_1(G) := \F_G(1,0\ldots,0,-1)
\]
to denote the corresponding flow polytope.
Flow polytopes are lattice polytopes, in that their vertices are always elements of the integer lattice $\Z^{E}$.

Computing the volume of an arbitrary lattice polytope is a challenging problem.
There is a closed formula for flow polytope volumes, using Kostant partition functions, due to Lidskii~\cite{lidskii} and Baldoni and Vergne~\cite{baldoni-vergne}. 
The Kostant partition function $K_G$ evaluated at the vector $\a\in\Z^{n+1}$ is defined as 
\[
	K_G(\a) = \#\left\{ (f(e))_{e\in E(G)} | \sum_{e\in E(G)} f(e)\alpha(e)=\a, \ \text{and} f(e)\in\Z_{\geq0} \right\},
\]
where $\{\{\alpha(e)\}\}_{e\in E(G)}$ is the multiset of positive type $A_n$ roots corresponding with the multiset $E(G)$.
That is, the Kostant partition function counts the number of ways to write the vector $\a$ as a $\mathbb{N}$-linear combination of vectors $\alpha(e)$ for $e\in E(G)$.
The Lidskii formula states that the (normalized) volume of $\F_G(\a)$ is 
\begin{align*}
    \sum_{\stackrel{(m_1,\ldots,m_n)\geq (\outd_1,\ldots,\outd_n)}{\sum m_i= |E|}}   \binom{|E|-|V|+1}{m_1-1,\ldots,m_n-1} a_1^{m_1-1}\cdots a_n^{m_n-1} K_G(m_1-\outd_1,\ldots,m_n-\outd_n,0),
\end{align*}
where we sum over weak compositions using dominance order. 
Note that the Kostant partition function is also difficult to compute in general, and thus the efficacy of this formula depends on the DAG under consideration.
In the case of the netflow vector $\a=(1,0,\ldots,0,-1)$, the Lidskii formula reduces to 
\begin{equation} \label{eq:LidskiiSimple}
    \vol \F_1(G) = K_G(|E|-|V|+2-\outd_1,1-\outd_2,\ldots,1-\outd_n,0)
\end{equation}
which is equivalent to the following, via flow reversal~\cite{meszaros-morales},
\begin{align*}
    \vol \F_1(G) = K_G(0,\ind_2-1,\ldots,\ind_n-1,|V|-|E|-2+\ind_{n+1}).
\end{align*}
Geometrically, this means that the volume of $\F_1(G)$ is equal to the number of lattice points in the related flow polytope $\F_G(0,\ind_2-1,\ldots,\ind_n-1,|V|-|E|-2+\ind_{n+1})$ or equivalently, the integer flows on $G$ with netflow vector $(0,\ind_2-1,\ldots,\ind_n-1,|V|-|E|-2+\ind_{n+1})$.
It is worth noting that an alternative method exists to compute flow polytope volumes for DAGs containing a directed Hamiltonian path, one which relies on combinatorics of $G$-cyclic orders~\cite[Theorem 4.8]{cyclicorderflow}.

A family of DAGs of particular interest is that of full DAGs and a generalization which we call $r$-full DAGs.
We are interested in $r$-full DAGs because von Bell, Braun, Bruegge, Hanely, Peterson, Serhiyenko and Yip proved~\cite[Proposition 6.15]{vonbell2024triangulations} that the flow polytope $\F_1(G)$ for an $r$-full DAG $G$ is Gorenstein.

\begin{definition} \label{def:r-full}
    A DAG $G$ is \emph{$r$-full} if every inner vertex has in- and out-degree $r$. 
    In the case where $r=2$, we say that $G$ is \emph{full}. 
\end{definition}

\begin{example} \label{ex:r-full}
	Pictured below is a $2$-full (i.e., full) DAG and a $3$-full DAG, respectively. 
	
      \begin{center}
     \begin{tikzpicture}
   
     \begin{scope}[xshift=0, yshift=0, scale=1]
   
	\vertex[fill](a1) at (1,0) {};
	\vertex[fill](a2) at (2,0) {};
	\vertex[fill](a3) at (3,0) {};
	\vertex[fill](a4) at (4,0) {};
	\vertex[fill](a5) at (5,0) {};
	\vertex[fill](a6) at (6,0) {};
	\draw[-stealth, thick] (a1) to[out=0,in=180] (a2);
	 \draw[-stealth, thick] (a1) to[out=60,in=120] (a2);
	 \draw[-stealth, thick] (a1) to[out=60,in=120] (a3);
	 \draw[-stealth, thick] (a2) to[out=0,in=180] (a3);
	\draw[-stealth, thick] (a2) to[out=60,in=120] (a4);
	 \draw[-stealth, thick] (a3) to[out=0,in=180] (a4);
	\draw[-stealth, thick] (a3) to[out=60,in=120] (a6);
	 \draw[-stealth, thick] (a4) to[out=0,in=180] (a5);
	\draw[-stealth, thick] (a4) to[out=60,in=120] (a5);
	 \draw[-stealth, thick] (a5) to[out=0,in=180] (a6);
	\draw[-stealth, thick] (a5) to[out=60,in=120] (a6);
   
     \end{scope}     
     
     \begin{scope}[xshift=200, yshift=0, scale =1]

	\vertex[fill](a1) at (1,0) {};
	\vertex[fill](a2) at (2,0) {};
	\vertex[fill](a3) at (3,0) {};
	\vertex[fill](a4) at (4,0) {};
        \vertex[fill](a5) at (5,0) {};
	\draw[-stealth, thick] (a1) to[out=0,in=180] (a2);
        \draw[-stealth, thick] (a1) to[out=60,in=120] (a2);
        \draw[-stealth, thick] (a1) to[out=-60,in=-120] (a2);
        \draw[-stealth, thick] (a1) to[out=-60,in=-120] (a3);
        \draw[-stealth, thick] (a1) to[out=60,in=120] (a5);
        \draw[-stealth, thick] (a2) to[out=0,in=180] (a3);
        \draw[-stealth, thick] (a2) to[out=60,in=120] (a3);
        \draw[-stealth, thick] (a2) to[out=-60,in=-120] (a4);
        \draw[-stealth, thick] (a3) to[out=0,in=180] (a4);
        \draw[-stealth, thick] (a3) to[out=60,in=120] (a4);
        \draw[-stealth, thick] (a3) to[out=-60,in=-120] (a5);
        \draw[-stealth, thick] (a4) to[out=0,in=180] (a5);
        \draw[-stealth, thick] (a4) to[out=60,in=120] (a5);
        \draw[-stealth, thick] (a4) to[out=-60,in=-120] (a5);
    
	\end{scope}
        
     \end{tikzpicture}
     \end{center}
         
\end{example}

\begin{definition} \label{def:Fnk}
    Let $\mathscr{F}_{n,k}$ be the set of full DAGs on vertices $[n+1]$ with $\outd_1=k$. That is, DAGs with out-degree sequence $(k,2,2,\ldots,2,0)$ and in-degree sequence $(0,2,2,\ldots,2,k)$.
  \end{definition}

  Note that every full DAG with a unique source and sink is an element of $\mathscr{F}_{n,k}$ for some $k$.

\begin{proposition} \label{prop:edgecount}
    A DAG $G\in\mathscr{F}_{n,k}$ has $n+1$ vertices and $2n+k-2$ edges.
\end{proposition}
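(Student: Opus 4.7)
The plan is to prove both claims directly from the definition of $\mathscr{F}_{n,k}$ in Definition~\ref{def:Fnk}. The vertex count is immediate: by definition, the vertex set is $[n+1]$, which has $n+1$ elements. The edge count will follow from the handshake-type identity for directed graphs, namely that $|E| = \sum_{v \in V} \outd(v)$.

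First I would record the out-degree sequence specified in Definition~\ref{def:Fnk}, namely $(k, 2, 2, \ldots, 2, 0)$, where the initial entry $k$ is the out-degree of the source vertex $1$, the $n-1$ interior twos are the out-degrees of the inner vertices $2, \ldots, n$ (each of which has out-degree $2$ since $G$ is full, per Definition~\ref{def:r-full}), and the final zero is the out-degree of the sink vertex $n+1$. Summing,
\[
|E| = \outd_1 + \sum_{i=2}^{n} \outd_i + \outd_{n+1} = k + 2(n-1) + 0 = 2n + k - 2,
\]
which is the desired count.

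As a sanity check, I would verify the same count from the in-degree side: the in-degree sequence is $(0, 2, \ldots, 2, k)$, and summing gives $0 + 2(n-1) + k = 2n + k - 2$, matching the out-degree calculation as required by the equality $\sum_v \indeg(v) = \sum_v \outdeg(v) = |E|$.

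There is no real obstacle here; the statement is essentially a bookkeeping consequence of the degree-sequence constraints imposed on $\mathscr{F}_{n,k}$, and the only thing to be careful about is correctly identifying that exactly $n-1$ vertices (the inner vertices $2, 3, \ldots, n$) contribute out-degree $2$, rather than $n$ vertices.
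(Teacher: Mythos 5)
Your proposal is correct and follows essentially the same approach as the paper: the vertex count is immediate from the definition of $\mathscr{F}_{n,k}$, and the edge count comes from summing the out-degree sequence $(k,2,\ldots,2,0)$ to get $k + 2(n-1) = 2n+k-2$. The additional in-degree sanity check you include is a nice but inessential confirmation that the paper omits.
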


\begin{proof}
    The vertex count is true by assumption. 
The edge count comes from the fact that the number of edges in a DAG is equal to the sum of the out-degrees (or equivalently, the sum of the in-degrees). 
So
\begin{align*}
    |E(G)| &= \sum_{i\in V(G)} \outd_i = k + 2(n-1) = 2n +k -2 
\end{align*}
\end{proof}

We next define the length $n+1$ vector
  \[ 
  \w_n := (0,1,\ldots,1,-(n-1))\, ,
\]
which plays the following special role in computing volumes of flow polytopes for full DAGs.

\begin{proposition} \label{prop:VolumeSpecialization}
    Suppose $G\in\mathscr{F}_{n,3}$. 
    Then
    \[ 
    \vol \F_1(G) = K_G(\w_n). 
    \]
    In other words, computing the volume of the flow polytope $\F_1(G)$ is equivalent to counting lattice points in the flow polytope $\F_G(\w_n)$. 
\end{proposition}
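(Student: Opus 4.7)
The plan is a direct verification by plugging the degree data of $G \in \mathscr{F}_{n,3}$ into the flow-reversal form of the Lidskii formula stated in the background, namely
\[
\vol \F_1(G) = K_G(0, \ind_2 - 1, \ldots, \ind_n - 1, |V| - |E| - 2 + \ind_{n+1}).
\]
So the task reduces to computing each coordinate of this netflow vector explicitly and observing that it equals $\w_n$.

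First I would read off the in-degrees from the definition of $\mathscr{F}_{n,3}$: the in-degree sequence is $(0, 2, 2, \ldots, 2, 3)$, so $\ind_i - 1 = 1$ for each internal vertex $i \in \{2, \ldots, n\}$, giving the middle $n-1$ coordinates equal to $1$, which matches the middle block of $\w_n$. Next I would use Proposition~\ref{prop:edgecount} to get $|V| = n+1$ and $|E| = 2n+1$, and then compute the last coordinate:
\[
|V| - |E| - 2 + \ind_{n+1} = (n+1) - (2n+1) - 2 + 3 = -(n-1),
\]
which agrees with the final coordinate of $\w_n$. Thus the netflow vector in the Lidskii formula is exactly $\w_n$, and the identity follows.

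Since $K_G(\w_n)$ is, by definition, the number of nonnegative integer flows on $G$ with netflow vector $\w_n$, i.e., the number of lattice points of $\F_G(\w_n)$, the second sentence of the proposition is immediate from the first. There is no real obstacle here; the only thing to be careful about is using the reversed form of Lidskii (rather than the original form stated just before it), because the original form would produce the vector $(|E| - |V| + 2 - \outd_1, 1 - \outd_2, \ldots, 1 - \outd_n, 0)$, which has mostly $-1$ entries for a full DAG and does not directly read as $\w_n$. Using the flow-reversal version is what makes the specialization clean.
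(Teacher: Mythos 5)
Your proof is correct and follows the same basic strategy as the paper's: substitute the degree and edge data of $G\in\mathscr{F}_{n,3}$ into the Lidskii formula. In fact your observation at the end is sharper than the paper's one-line proof: the paper points to Equation~\eqref{eq:LidskiiSimple} literally, which for $G\in\mathscr{F}_{n,3}$ evaluates to $K_G(n-1,-1,\ldots,-1,0)$ rather than $K_G(\w_n)$; it is the flow-reversed form (the display immediately following that equation) that specializes directly to $\w_n$, exactly as you computed, and the two are equal only because of the reversal equivalence recorded there.
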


\begin{proof}
    This follows directly from applying Definition~\ref{def:Fnk} for $k=3$ and the edge count from Proposition~\ref{prop:edgecount} to Equation~\eqref{eq:LidskiiSimple}.
\end{proof}

Following Karrer and Newman~\cite{randomDAG2009}, we use the following method to construct DAGs of a fixed degree sequence.
For each vertex of a DAG $G$ with a fixed degree sequence, we imagine ``stubs'' attached to the vertex, with as many outgoing stubs as the out-degree and as many incoming stubs as the in-degree.
To create a DAG we match outgoing stubs with incoming stubs, where we only create an edge $(i,j)$ if $i<j$.
This construction method will be used in Section~\ref{sec:boolean}.

 \section{Poset Structure} \label{sec:boolean}

 In this section, we show first that full DAGs on $n+1$ vertices with $\outd_1=3$ are in bijection with binary strings of length $n-2$. 
 Then we show that the Boolean partial order induced by this bijection agrees with an edge operation on $\mathscr{F}_{n,3}$.

 \begin{lemma} \label{lemma:spinetc}
     Every DAG $G\in\mathscr{F}_{n,3}$ has the following edges:
     \[ 
     \{(i,i+1) \ |\ i\in[n] \} \cup \{(1,2),(n,n+1)\} \subset E(G). 
     \]
 \end{lemma}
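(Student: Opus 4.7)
The plan is to establish the spine edges by cut-counting. I begin with the boundary cases. Vertex $2$ has $\ind_2 = 2$, and since $G$ is a DAG with every edge $(i,j)$ satisfying $i<j$, the only possible source of an in-edge at vertex $2$ is vertex $1$. Hence the edge $(1,2)$ lies in $E(G)$, and in fact appears with multiplicity two. The symmetric argument at vertex $n$, which has $\outd_n = 2$ and only vertex $n+1$ as a possible out-neighbor, shows that $(n,n+1) \in E(G)$ with multiplicity two. This settles the two extremal spine edges as well as the additional copies of $(1,2)$ and $(n,n+1)$ listed in the statement.

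Next, for each $j \in [n]$, I count the edges crossing the cut $(\{1, \ldots, j\}, \{j+1, \ldots, n+1\})$. The sum of out-degrees over $\{1, \ldots, j\}$ equals $3 + 2(j-1) = 2j+1$, while the sum of in-degrees over $\{2, \ldots, j\}$ is $2(j-1)$. Since every in-edge to a vertex in $\{2, \ldots, j\}$ originates in $\{1, \ldots, j-1\} \subset \{1, \ldots, j\}$, subtracting the internal edge count from the total out-degree leaves exactly three edges crossing the cut. This uniform cut count of three reflects the fact that the flow out of the source, $\outd_1 = 3$, must be conserved through every initial segment of the vertex set.

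For the interior spine edges I argue by contradiction. Suppose $(i,i+1) \notin E(G)$ for some $2 \leq i \leq n-1$, and apply the cut count at $j = i-1$: the cut $(\{1, \ldots, i-1\}, \{i, \ldots, n+1\})$ has exactly three crossing edges. Vertex $i$ has in-degree two, and its in-edges all originate in $\{1, \ldots, i-1\}$, contributing two of the crossing edges. Vertex $i+1$ also has in-degree two, and by the assumption that $(i,i+1)$ is absent, its in-edges likewise all originate in $\{1, \ldots, i-1\}$, contributing two additional crossing edges disjoint from those entering $i$. The total of four crossing edges contradicts the count of three. The only mild subtlety is the choice of the \emph{shifted} cut at $j = i-1$ rather than $j = i$; the shift is what allows both vertices $i$ and $i+1$ to serve simultaneously as targets of the crossing edges, squeezing the tight budget of three edges until it breaks.
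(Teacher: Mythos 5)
Your proof is correct, and the interior case is argued by a genuinely different mechanism from the paper's. The paper argues directly: it compares the out-stubs available at $\{1,\ldots,i-1\}$ against the in-stubs consumed by $\{2,\ldots,i\}$ and finds a surplus of exactly one, so at most one of vertex $i$'s two in-stubs can reach back past $i-1$, forcing the edge $(i-1,i)$. You instead establish a uniform invariant --- every initial-segment cut $\{1,\ldots,j\}\mid\{j+1,\ldots,n+1\}$ is crossed by exactly three edges --- and then derive a contradiction by showing that if $(i,i+1)$ were absent, the cut at $j=i-1$ would have to carry four crossing edges (two into $i$, two into $i+1$). Both are degree-sum computations over an initial segment of vertices, but your cut-count framing is cleaner as a reusable invariant; in fact it is essentially the content of the paper's later Lemma~\ref{lemma:UniqueOverpass} (exactly one edge passes over each interior vertex, since two of the three crossing edges must enter vertex $j+1$), which the paper proves by a more roundabout route via the bijection of Theorem~\ref{thm:bijection}. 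Your version also cleanly covers the full range of spine edges: together with the boundary cases $(1,2)$ and $(n,n+1)$, contradiction at $2\le i\le n-1$ yields $(2,3),\ldots,(n-1,n)$, so nothing is missed.
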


 \begin{proof}
     Let $G\in\mathscr{F}_{n,3}$. 
     Note that $\ind_2=2$. 
     Since $(i,j)\in E(G)$ implies $i<j$, all edges going in to $2$ must start at $1$. 
     Thus there must be two copies of the edge $(1,2)$. 
     A similar argument can be made for two copies of the edge $(n,n+1)$ using the $\outd_n=2$. 
   
     Now consider a $2\leq i \leq n-1$. 
     Since $G$ is full, $\ind_i=2$. 
     Once again, both edges must begin at a vertex smaller than $i$. 
     But this is also true for every vertex \textit{before} $i$. 
     If we look at the available pool of out-degrees at non-consecutive vertices with which to connect the two in-stubs at $i$, we have
     \begin{align*}
          \sum_{j=1}^{i-1} \outd_j - \sum_{j=1}^i \ind_j &= (3 + 2(i-2)) - 2(i-1) = 1.
     \end{align*}
     So at most one of the in-stubs at $i$ can be connected to an out-stub \textit{before} $i-1$, forcing at least one edge $(i,i-1)$. 
 \end{proof}

 \begin{definition} \label{def:spine}
     Let $G$ be a DAG with vertices $[n+1]$. 
     If $(i,i+1)\in E(G)$ for all $i\in[n]$, we say $G$ has a \emph{spine}. 
     Further, if $G$ has a spine, we call the edge set $\{(i,i+1) \ |\  i\in [n]\}$ the spine. 
     Throughout this work, we label the spine edges as $e_i=(i,i+1)$ to distinguish them from other (multi)edges between consecutively-labeled vertices. 
 \end{definition}

 \begin{corollary} \label{cor:spine}
     Every DAG in $\mathscr{F}_{n,3}$ has a spine.
 \end{corollary}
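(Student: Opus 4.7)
The plan is very short: this corollary is essentially a restatement of part of Lemma~\ref{lemma:spinetc}. Recall that Definition~\ref{def:spine} declares a DAG on vertices $[n+1]$ to have a spine exactly when it contains every edge of the form $(i,i+1)$ for $i \in [n]$. Lemma~\ref{lemma:spinetc} already asserts that any $G \in \mathscr{F}_{n,3}$ contains the edge set $\{(i,i+1) \mid i \in [n]\} \cup \{(1,2),(n,n+1)\}$, which in particular contains the entire spine.

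So the proof I would write is a single sentence: by Lemma~\ref{lemma:spinetc}, the edges $(i,i+1)$ for $i \in [n]$ all belong to $E(G)$, hence $G$ has a spine in the sense of Definition~\ref{def:spine}. There are no obstacles here — the work has all been done in establishing Lemma~\ref{lemma:spinetc}, where the nontrivial content (the stub-counting argument forcing an edge into each interior vertex from its immediate predecessor) already took place. The corollary is being separated out from the lemma purely because it is the statement that will be cited downstream (e.g., to justify talking about $e_i = (i,i+1)$ in the remainder of Section~\ref{sec:boolean}), while the additional information about double edges $(1,2)$ and $(n,n+1)$ in the lemma is a distinct structural fact that is convenient to record at the same time.
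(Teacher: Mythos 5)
Your proposal is correct and matches the paper's intent exactly: the paper gives no explicit proof of this corollary, treating it as an immediate consequence of Lemma~\ref{lemma:spinetc} combined with Definition~\ref{def:spine}, which is precisely the one-line argument you spell out.
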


 \begin{theorem} \label{thm:bijection}
     The set $\mathscr{F}_{n,3}$ is in bijection with binary strings of length $n-2$, i.e., the elements of $(\Z/2\Z)^{n-2}$. 
 \end{theorem}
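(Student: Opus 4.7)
The plan is to construct an explicit bijection
$\Phi:\mathscr{F}_{n,3}\to\{0,1\}^{n-2}$
and exhibit an inverse $\Psi$ using the stub-pairing framework described at the end of Section~\ref{sec:background}. Given $G\in\mathscr{F}_{n,3}$, define $\Phi(G)=(b_2,\ldots,b_{n-1})$ by letting $b_i=1$ if the spine-adjacent edge $(i,i+1)$ appears with multiplicity two in $G$, and $b_i=0$ otherwise; this is well-defined by Corollary~\ref{cor:spine}. The substance of the proof is to construct the inverse $\Psi$ and verify that the two maps compose to the identity in both orders.

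Given a binary string $b=(b_2,\ldots,b_{n-1})$, I would build $\Psi(b)$ in two stages. First, place the forced edges of Lemma~\ref{lemma:spinetc} (the spine and the extra copies of $(1,2)$ and $(n,n+1)$), and for each $i\in\{2,\ldots,n-1\}$ with $b_i=1$ place an additional copy of $(i,i+1)$. After doing so, the remaining unmatched out-stubs occur singly at the vertices of
\[A:=\{1\}\cup\{i\in\{2,\ldots,n-1\}:b_i=0\},\]
while the remaining unmatched in-stubs occur singly at the vertices of
\[C:=\{i+1:i\in\{2,\ldots,n-1\},\ b_i=0\}\cup\{n+1\};\]
a direct stub count gives $|A|=|C|$.

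The key step is to show that there is a unique way to complete the graph by matching these remaining stubs so that the result lies in $\mathscr{F}_{n,3}$. The crucial constraint is that for $i\in A\cap\{2,\ldots,n-1\}$, the out-stub at $i$ cannot be matched to the in-stub at $i+1$, for otherwise the completed graph would acquire a multi-edge $(i,i+1)$ and contradict $b_i=0$. Writing $A=\{a_0=1<a_1<\cdots<a_{m-1}\}$ and setting $a_m:=n$, one has $C=\{a_1+1,\ldots,a_{m-1}+1,a_m+1\}$. Processing vertices from right to left, the out-stub at $a_{m-1}$ must go to a vertex in $C$ strictly greater than $a_{m-1}+1$, and $a_m+1=n+1$ is the only such candidate; the same reasoning then forces the out-stub at $a_{m-2}$ to pair with $a_{m-1}+1$, and so on, yielding the edges $(a_i,a_{i+1}+1)$ for $i=0,\ldots,m-1$. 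This defines $\Psi(b)$ uniquely.

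To close, I would verify that $\Psi(b)\in\mathscr{F}_{n,3}$ by a short vertex-by-vertex tally of in- and out-degrees, and observe that $\Phi(\Psi(b))=b$ holds by construction since the only consecutive multi-edges in $\Psi(b)$ are those indexed by the $1$'s of $b$. Conversely, $\Psi(\Phi(G))=G$ follows because the same forced-matching argument applies to any $G\in\mathscr{F}_{n,3}$: after removing the spine and all multi-edges, the remaining edges of $G$ match out-stubs in $A$ to in-stubs in $C$ subject to the same forbidden-pair constraint, and hence must coincide with the matching produced by $\Psi$. The main obstacle is articulating the uniqueness-of-matching argument cleanly; the remaining degree and count verifications are routine.
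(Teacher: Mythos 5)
Your proposal is correct, and it takes a somewhat different route from the paper's proof. Both use the stub-pairing framework, and the bijection is in substance the same, but the structure of the argument differs. The paper builds the forward map $\lambda$ from strings to DAGs, proves injectivity by a local case analysis at the first differing index, and then establishes surjectivity indirectly via a counting argument: after the forced edges, there are exactly $2$ choices of incoming edge at each of vertices $3,\ldots,n$, so $|\mathscr{F}_{n,3}|\leq 2^{n-2}$. You instead define both maps directly — the read-off map $\Phi$ that records which consecutive edges are doubled, and $\Psi$ via a forced-matching argument — and verify the two composites. Your uniqueness-of-matching argument (processing the remaining out-stubs right-to-left and showing each has exactly one legal partner once the ``no new doubled edge'' constraint is imposed) does double duty: it both makes $\Psi$ well-defined and yields $\Psi\circ\Phi=\mathrm{id}$, which replaces the paper's counting step with a constructive one. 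This is arguably the cleaner argument, since the paper's ``$2$ choices at each vertex'' surjectivity step is more implicit.

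Two small points worth flagging. First, your bit convention is the complement of the paper's (you set $b_i=1$ when $(i,i+1)$ is doubled, while the paper sets $b_l=0$ for the doubled edge $(l+1,l+2)$), and your index runs $2,\ldots,n-1$ rather than $1,\ldots,n-2$. This is immaterial for the existence of a bijection, but since the paper later uses the specific correspondence between $0\mapsto 1$ flips and the interchange operation (Lemma~\ref{lemma:interchange=0to1}), a reader who wanted to splice your proof in would need to reconcile the conventions. Second, when you claim $\Phi(\Psi(b))=b$ ``by construction,'' the content that needs saying is that none of the matching edges $(a_i,a_{i+1}+1)$ can coincide with a spine edge (since $a_{i+1}+1=a_i+1$ would force $a_{i+1}=a_i$), so the matching step never introduces a new consecutive multi-edge; your sketch gestures at this but it should be made explicit in a final write-up.
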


 \begin{proof}
     We construct a bijection directly. 

 First we construct a map $\lambda:(\Z/2\Z)^{n-2} \ra \mathscr{F}_{n,3}$. 
 Let $\b=b_1 b_2 \cdots b_{n-2}\in (\Z/2\Z)^{n-2}$. 
 Let $J_\b$ be the list of indices in which $\b$ contains a 1. Then $J_\b=\{i\in[n-2] \ |\ b_i=1\}=\{j_1<j_2<\cdots <j_r\}$, for some $0\leq r\leq n-2$. 
 Now define $\lambda(\b)=G_\b$, where $G_\b$ is a DAG with vertices $V(G_\b)=[n+1]$ and edges 
 \begin{align*}
     E(G_\b) &= \{e_i \ |\ i\in[n] \} \cup \{(1,2),(n,n+1)\} \\
     &\cup \{(l+1,l+2) \ |\ b_l=0, \text{for } 1\leq l\leq n-2\} \\
     &\cup \{(1,j_1+2),(j_1+1,j_2+2),\ldots,(j_{r-1}+1,j_r+2),(j_r+1,n+1)\}.
 \end{align*}
 These three subsets of $E(G_\b)$ correspond with the following structures, respectively:
 \begin{enumerate}
     \item[(i)] the edges described in Lemma~\ref{lemma:spinetc};
     \item[(ii)] a second edge from vertex $l+1$ to $l+2$ for every $0$ at index $l$ in $\b$; and
     \item[(iii)] an edge connecting remaining consecutive pairs of out-stubs and in-stubs.
 \end{enumerate}

 Next we show $G_\b$ is full with $\outd_1=3$. 
 By construction, $G_\b$ is a DAG with $V(G)=[n+1]$. 
 Our construction also ensures that 
 \[
 \outd_1=2+0+1=3
 \]  
 and
 \begin{align*}
     \ind_2 &= 2+0+0 = 2.
 \end{align*}
 Further, for $3\leq i\leq n$,
 \begin{align*}
     \ind_i &= \left\{\begin{array}{ll}
         1+1+0, & b_{i-2} = 0   \\
         1+0+1, & b_{i-2} = 1 
     \end{array}\right. 
     = 2
 \end{align*}
 and, for $2\leq i\leq n$,
 \begin{align*}
     \outd_i &= \left\{\begin{array}{ll}
         1+1+0, & b_{i-1} = 0  \\
         1+0+1, & b_{i-1} = 1 
     \end{array}\right. 
     = 2.
 \end{align*}

 Thus $G_\b$ has the required degree sequence to be in $\mathscr{F}_{n,3}$. 

 Now we show $\lambda$ is injective. 
 Suppose $\b\neq\b'\in (\Z/2\Z)^{n-2}$. 
 Then there exists an index $l\in[n-2]$ for which $b_l\neq b'_l$. 
 Without loss of generality we may assume $b_l=0$ and $b'_l=1$. 
 Then according to our construction, we have 
 \begin{align*}
     \{(l+1,l+2),(l+1,l+2)\} &\subset E(G_\b) \text{ and }\\
     \{(l+1,l+2),(l+1,k)\} &\subset E(G_{\b'})\, ,
 \end{align*}
 where 
 \begin{align*}
     l+2 < k = \left\{\begin{array}{ll}
	 n+1, & b'_l \text{ is the last } 1 \text{ in } \b' \\
	 l+m+2,	 & b'_{l+m}=1 \text{ is the next } 1 \text{ in } \b' \text{ for } m\geq 1
     \end{array}\right. .
 \end{align*}

 Since both $G_\b$ and $G_{\b'}$ are full and $l+1$ is an interior vertex, we know there are no other edges out of vertex $l+1$ in $G_{\b'}$ or $G_\b$. 
 Thus $G_\b \neq G_{\b'}$.

 Finally we show that $\lambda$ is surjective. 
 Since $\lambda$ is an injection of finite sets, it suffices to show that $|\mathscr{F}_{n,k}|\leq |(\Z/2\Z)^{n-2}| = 2^{n-2}$. 
 By Proposition~\ref{prop:edgecount}, we have to decide how to allocate $2n+1$ directed edges onto $n+1$ vertices. 
 By Lemma~\ref{lemma:spinetc}, $n+2$ edges are forced by the spine and initial/final vertices. 
 Thus we have 
 \[ 
 2n+1 - (n+2) = n-1 
 \]
 edges remaining. 
 By interpreting the remaining edges as connections between unused stubs on vertices~\cite{randomDAG2009}, we can allocate an edge by pairing an out-going stub with an in-going stub, as long as the in-going stub is at a higher vertex. 

 Observe that once we have placed the edges from Lemma~\ref{lemma:spinetc}, and looking at vertex $i=3$, there are $2$ out-going stubs preceding $3$ and and $1$ in-going stub at $3$.
 Thus, there are two choices of incoming edges for vertex $3$.
 Inductively, for $i=4,\ldots,n$, assume that we have made our choice of incoming edges for vertices up to $i-1$.
 That means the in-going stub at $i$ has $2$ choices of preceding out-going stubs to connect to. 
 Once we reach vertex $n+1$, there is a unique choice of out-going stub for the in-going stub at $n+1$ to connect to. 
 In total, this gives $2^{n-2}$ possible ways to pair in- and out-going stubs to allocate edges. 
 \end{proof}

 \begin{example} \label{ex:n=4bijection}
     Bijection construction for $(\Z/2\Z)^2$ to $\mathscr{F}_{4,3}$ shown in Figure~\ref{fig:n=4Bijection}. 
     In the first column we begin with the edges described in Lemma~\ref{lemma:spinetc}. 
     In the second column, we add a consecutive edge $(l+1,l+2)$ for each $0$ at index $l$ in $\b$. 
     In the final column, we add the remaining edges corresponding to the $1$s in $\b$ and complete the missing in- and out-degrees at each vertex.
 \end{example}

 \begin{figure}
 \begin{center}
 \begin{tikzpicture}

 \begin{scope}[scale=0.8, xshift=-170, yshift=180]

	\vertex[fill,label=below:\footnotesize{$1$}](a1) at (1,0) {};
	\vertex[fill,label=below:\footnotesize{$2$}](a2) at (2,0) {};
	\vertex[fill,label=below:\footnotesize{$3$}](a3) at (3,0) {};
	\vertex[fill,label=below:\footnotesize{$4$}](a4) at (4,0) {};
	\vertex[fill,label=below:\footnotesize{$5$}](a5) at (5,0) {};
	\draw[-stealth, thick] (a1) to[out=0,in=180] (a2);
     \draw[-stealth, thick] (a1) to[out=60,in=120] (a2);
     \draw[-stealth, thick] (a2) to[out=0,in=180] (a3);
     \draw[-stealth, thick] (a3) to[out=0,in=180] (a4);
     \draw[-stealth, thick] (a4) to[out=0,in=180] (a5);
	\draw[-stealth, thick] (a4) to[out=60,in=120] (a5);
     \node[] at (2.5,-0.5) {\textcolor{orange}{$0$}};
     \node[] at (3.5,-0.5) {\textcolor{orange}{$0$}};

 \end{scope}

 \begin{scope}[scale=0.8, xshift=-170, yshift=120]

	\vertex[fill,label=below:\footnotesize{$1$}](a1) at (1,0) {};
	\vertex[fill,label=below:\footnotesize{$2$}](a2) at (2,0) {};
	\vertex[fill,label=below:\footnotesize{$3$}](a3) at (3,0) {};
	\vertex[fill,label=below:\footnotesize{$4$}](a4) at (4,0) {};
	\vertex[fill,label=below:\footnotesize{$5$}](a5) at (5,0) {};
	\draw[-stealth, thick] (a1) to[out=0,in=180] (a2);
     \draw[-stealth, thick] (a1) to[out=60,in=120] (a2);
     \draw[-stealth, thick] (a2) to[out=0,in=180] (a3);
     \draw[-stealth, thick] (a3) to[out=0,in=180] (a4);
     \draw[-stealth, thick] (a4) to[out=0,in=180] (a5);
	\draw[-stealth, thick] (a4) to[out=60,in=120] (a5);
     \node[] at (2.5,-0.5) {\textcolor{orange}{$0$}};
     \node[] at (3.5,-0.5) {\textcolor{orange}{$1$}};

 \end{scope}

 \begin{scope}[scale=0.8, xshift=-170, yshift=60]

	\vertex[fill,label=below:\footnotesize{$1$}](a1) at (1,0) {};
	\vertex[fill,label=below:\footnotesize{$2$}](a2) at (2,0) {};
	\vertex[fill,label=below:\footnotesize{$3$}](a3) at (3,0) {};
	\vertex[fill,label=below:\footnotesize{$4$}](a4) at (4,0) {};
	\vertex[fill,label=below:\footnotesize{$5$}](a5) at (5,0) {};
	\draw[-stealth, thick] (a1) to[out=0,in=180] (a2);
     \draw[-stealth, thick] (a1) to[out=60,in=120] (a2);
     \draw[-stealth, thick] (a2) to[out=0,in=180] (a3);
     \draw[-stealth, thick] (a3) to[out=0,in=180] (a4);
     \draw[-stealth, thick] (a4) to[out=0,in=180] (a5);
	\draw[-stealth, thick] (a4) to[out=60,in=120] (a5);
     \node[] at (2.5,-0.5) {\textcolor{orange}{$1$}};
     \node[] at (3.5,-0.5) {\textcolor{orange}{$0$}};

 \end{scope}

 \begin{scope}[scale=0.8, xshift=-170, yshift=0]

	\vertex[fill,label=below:\footnotesize{$1$}](a1) at (1,0) {};
	\vertex[fill,label=below:\footnotesize{$2$}](a2) at (2,0) {};
	\vertex[fill,label=below:\footnotesize{$3$}](a3) at (3,0) {};
	\vertex[fill,label=below:\footnotesize{$4$}](a4) at (4,0) {};
	\vertex[fill,label=below:\footnotesize{$5$}](a5) at (5,0) {};
	\draw[-stealth, thick] (a1) to[out=0,in=180] (a2);
     \draw[-stealth, thick] (a1) to[out=60,in=120] (a2);
     \draw[-stealth, thick] (a2) to[out=0,in=180] (a3);
     \draw[-stealth, thick] (a3) to[out=0,in=180] (a4);
     \draw[-stealth, thick] (a4) to[out=0,in=180] (a5);
	\draw[-stealth, thick] (a4) to[out=60,in=120] (a5);
     \node[] at (2.5,-0.5) {\textcolor{orange}{$1$}};
     \node[] at (3.5,-0.5) {\textcolor{orange}{$1$}};

 \end{scope}


 \begin{scope}[scale=0.8, xshift=0, yshift=180]

	\vertex[fill,label=below:\footnotesize{$1$}](a1) at (1,0) {};
	\vertex[fill,label=below:\footnotesize{$2$}](a2) at (2,0) {};
	\vertex[fill,label=below:\footnotesize{$3$}](a3) at (3,0) {};
	\vertex[fill,label=below:\footnotesize{$4$}](a4) at (4,0) {};
	\vertex[fill,label=below:\footnotesize{$5$}](a5) at (5,0) {};
	\draw[-stealth, thick] (a1) to[out=0,in=180] (a2);
     \draw[-stealth, thick] (a1) to[out=60,in=120] (a2);
     \draw[-stealth, thick] (a2) to[out=0,in=180] (a3);
	\draw[-stealth, thick] (a2) to[out=60,in=120] (a3);
     \draw[-stealth, thick] (a3) to[out=0,in=180] (a4);
	\draw[-stealth, thick] (a3) to[out=60,in=120] (a4);
     \draw[-stealth, thick] (a4) to[out=0,in=180] (a5);
	\draw[-stealth, thick] (a4) to[out=60,in=120] (a5);
     \node[] at (2.5,-0.5) {\textcolor{orange}{$0$}};
     \node[] at (3.5,-0.5) {\textcolor{orange}{$0$}};

 \end{scope}

 \begin{scope}[scale=0.8, xshift=0, yshift=120]

	\vertex[fill,label=below:\footnotesize{$1$}](a1) at (1,0) {};
	\vertex[fill,label=below:\footnotesize{$2$}](a2) at (2,0) {};
	\vertex[fill,label=below:\footnotesize{$3$}](a3) at (3,0) {};
	\vertex[fill,label=below:\footnotesize{$4$}](a4) at (4,0) {};
	\vertex[fill,label=below:\footnotesize{$5$}](a5) at (5,0) {};
	\draw[-stealth, thick] (a1) to[out=0,in=180] (a2);
     \draw[-stealth, thick] (a1) to[out=60,in=120] (a2);
     \draw[-stealth, thick] (a2) to[out=0,in=180] (a3);
	\draw[-stealth, thick] (a2) to[out=60,in=120] (a3);
     \draw[-stealth, thick] (a3) to[out=0,in=180] (a4);
     \draw[-stealth, thick] (a4) to[out=0,in=180] (a5);
	\draw[-stealth, thick] (a4) to[out=60,in=120] (a5);
     \node[] at (2.5,-0.5) {\textcolor{orange}{$0$}};
     \node[] at (3.5,-0.5) {\textcolor{orange}{$1$}};

 \end{scope}

 \begin{scope}[scale=0.8, xshift=0, yshift=60]

	\vertex[fill,label=below:\footnotesize{$1$}](a1) at (1,0) {};
	\vertex[fill,label=below:\footnotesize{$2$}](a2) at (2,0) {};
	\vertex[fill,label=below:\footnotesize{$3$}](a3) at (3,0) {};
	\vertex[fill,label=below:\footnotesize{$4$}](a4) at (4,0) {};
	\vertex[fill,label=below:\footnotesize{$5$}](a5) at (5,0) {};
	\draw[-stealth, thick] (a1) to[out=0,in=180] (a2);
     \draw[-stealth, thick] (a1) to[out=60,in=120] (a2);
     \draw[-stealth, thick] (a2) to[out=0,in=180] (a3);
     \draw[-stealth, thick] (a3) to[out=0,in=180] (a4);
	\draw[-stealth, thick] (a3) to[out=60,in=120] (a4);
     \draw[-stealth, thick] (a4) to[out=0,in=180] (a5);
	\draw[-stealth, thick] (a4) to[out=60,in=120] (a5);
     \node[] at (2.5,-0.5) {\textcolor{orange}{$1$}};
     \node[] at (3.5,-0.5) {\textcolor{orange}{$0$}};

 \end{scope}

 \begin{scope}[scale=0.8, xshift=0, yshift=0]

	\vertex[fill,label=below:\footnotesize{$1$}](a1) at (1,0) {};
	\vertex[fill,label=below:\footnotesize{$2$}](a2) at (2,0) {};
	\vertex[fill,label=below:\footnotesize{$3$}](a3) at (3,0) {};
	\vertex[fill,label=below:\footnotesize{$4$}](a4) at (4,0) {};
	\vertex[fill,label=below:\footnotesize{$5$}](a5) at (5,0) {};
	\draw[-stealth, thick] (a1) to[out=0,in=180] (a2);
     \draw[-stealth, thick] (a1) to[out=60,in=120] (a2);
     \draw[-stealth, thick] (a2) to[out=0,in=180] (a3);
     \draw[-stealth, thick] (a3) to[out=0,in=180] (a4);
     \draw[-stealth, thick] (a4) to[out=0,in=180] (a5);
	\draw[-stealth, thick] (a4) to[out=60,in=120] (a5);
     \node[] at (2.5,-0.5) {\textcolor{orange}{$1$}};
     \node[] at (3.5,-0.5) {\textcolor{orange}{$1$}};

 \end{scope}


 \begin{scope}[scale=0.8, xshift=170, yshift=180]

	\vertex[fill,label=below:\footnotesize{$1$}](a1) at (1,0) {};
	\vertex[fill,label=below:\footnotesize{$2$}](a2) at (2,0) {};
	\vertex[fill,label=below:\footnotesize{$3$}](a3) at (3,0) {};
	\vertex[fill,label=below:\footnotesize{$4$}](a4) at (4,0) {};
	\vertex[fill,label=below:\footnotesize{$5$}](a5) at (5,0) {};
	\draw[-stealth, thick] (a1) to[out=0,in=180] (a2);
     \draw[-stealth, thick] (a1) to[out=60,in=120] (a2);
     \draw[-stealth, thick] (a1) to[out=60,in=120] (a5);
     \draw[-stealth, thick] (a2) to[out=0,in=180] (a3);
	\draw[-stealth, thick] (a2) to[out=60,in=120] (a3);
     \draw[-stealth, thick] (a3) to[out=0,in=180] (a4);
	\draw[-stealth, thick] (a3) to[out=60,in=120] (a4);
     \draw[-stealth, thick] (a4) to[out=0,in=180] (a5);
	\draw[-stealth, thick] (a4) to[out=60,in=120] (a5);
     \node[] at (2.5,-0.5) {\textcolor{orange}{$0$}};
     \node[] at (3.5,-0.5) {\textcolor{orange}{$0$}};

 \end{scope}

 \begin{scope}[scale=0.8, xshift=170, yshift=120]

	\vertex[fill,label=below:\footnotesize{$1$}](a1) at (1,0) {};
	\vertex[fill,label=below:\footnotesize{$2$}](a2) at (2,0) {};
	\vertex[fill,label=below:\footnotesize{$3$}](a3) at (3,0) {};
	\vertex[fill,label=below:\footnotesize{$4$}](a4) at (4,0) {};
	\vertex[fill,label=below:\footnotesize{$5$}](a5) at (5,0) {};
	\draw[-stealth, thick] (a1) to[out=0,in=180] (a2);
     \draw[-stealth, thick] (a1) to[out=60,in=120] (a2);
     \draw[-stealth, thick] (a1) to[out=60,in=120] (a4);
     \draw[-stealth, thick] (a2) to[out=0,in=180] (a3);
	\draw[-stealth, thick] (a2) to[out=60,in=120] (a3);
     \draw[-stealth, thick] (a3) to[out=0,in=180] (a4);
	\draw[-stealth, thick] (a3) to[out=60,in=120] (a5);
     \draw[-stealth, thick] (a4) to[out=0,in=180] (a5);
	\draw[-stealth, thick] (a4) to[out=60,in=120] (a5);
     \node[] at (2.5,-0.5) {\textcolor{orange}{$0$}};
     \node[] at (3.5,-0.5) {\textcolor{orange}{$1$}};

 \end{scope}

 \begin{scope}[scale=0.8, xshift=170, yshift=60]

	\vertex[fill,label=below:\footnotesize{$1$}](a1) at (1,0) {};
	\vertex[fill,label=below:\footnotesize{$2$}](a2) at (2,0) {};
	\vertex[fill,label=below:\footnotesize{$3$}](a3) at (3,0) {};
	\vertex[fill,label=below:\footnotesize{$4$}](a4) at (4,0) {};
	\vertex[fill,label=below:\footnotesize{$5$}](a5) at (5,0) {};
	\draw[-stealth, thick] (a1) to[out=0,in=180] (a2);
     \draw[-stealth, thick] (a1) to[out=60,in=120] (a2);
     \draw[-stealth, thick] (a1) to[out=60,in=120] (a3);
     \draw[-stealth, thick] (a2) to[out=0,in=180] (a3);
	\draw[-stealth, thick] (a2) to[out=60,in=120] (a5);
     \draw[-stealth, thick] (a3) to[out=0,in=180] (a4);
	\draw[-stealth, thick] (a3) to[out=60,in=120] (a4);
     \draw[-stealth, thick] (a4) to[out=0,in=180] (a5);
	\draw[-stealth, thick] (a4) to[out=60,in=120] (a5);
     \node[] at (2.5,-0.5) {\textcolor{orange}{$1$}};
     \node[] at (3.5,-0.5) {\textcolor{orange}{$0$}};

 \end{scope}

 \begin{scope}[scale=0.8, xshift=170, yshift=0]

	\vertex[fill,label=below:\footnotesize{$1$}](a1) at (1,0) {};
	\vertex[fill,label=below:\footnotesize{$2$}](a2) at (2,0) {};
	\vertex[fill,label=below:\footnotesize{$3$}](a3) at (3,0) {};
	\vertex[fill,label=below:\footnotesize{$4$}](a4) at (4,0) {};
	\vertex[fill,label=below:\footnotesize{$5$}](a5) at (5,0) {};
	\draw[-stealth, thick] (a1) to[out=0,in=180] (a2);
     \draw[-stealth, thick] (a1) to[out=60,in=120] (a2);
     \draw[-stealth, thick] (a1) to[out=60,in=120] (a3);
     \draw[-stealth, thick] (a2) to[out=0,in=180] (a3);
	\draw[-stealth, thick] (a2) to[out=60,in=120] (a4);
     \draw[-stealth, thick] (a3) to[out=0,in=180] (a4);
	\draw[-stealth, thick] (a3) to[out=60,in=120] (a5);
     \draw[-stealth, thick] (a4) to[out=0,in=180] (a5);
	\draw[-stealth, thick] (a4) to[out=60,in=120] (a5);
     \node[] at (2.5,-0.5) {\textcolor{orange}{$1$}};
     \node[] at (3.5,-0.5) {\textcolor{orange}{$1$}};

 \end{scope}

 \end{tikzpicture}
 \end{center}
 \caption{Bijection construction for $(\Z/2\Z)^2$ to $\mathscr{F}_{4,3}$.}
 \label{fig:n=4Bijection}
 \end{figure}

 \begin{corollary} \label{cor:DAGCount}
     There are $2^{n-2}$ full DAGs on $n+1$ vertices and $\outd_1=3$. 
 \end{corollary}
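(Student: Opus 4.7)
The plan is to observe that Corollary~\ref{cor:DAGCount} is an immediate consequence of Theorem~\ref{thm:bijection}. By definition, $\mathscr{F}_{n,3}$ is the set of full DAGs on $n+1$ vertices with $\outd_1 = 3$, so the corollary is precisely the statement that $|\mathscr{F}_{n,3}| = 2^{n-2}$.

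First I would invoke Theorem~\ref{thm:bijection}, which provides an explicit bijection $\lambda : (\Z/2\Z)^{n-2} \to \mathscr{F}_{n,3}$. Since $\lambda$ is a bijection between finite sets, we have $|\mathscr{F}_{n,3}| = |(\Z/2\Z)^{n-2}|$. The latter cardinality is $2^{n-2}$ since binary strings of length $n-2$ are indexed by choosing each coordinate independently from $\{0,1\}$.

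There is essentially no obstacle here; the entire content of the claim has already been established during the proof of Theorem~\ref{thm:bijection}. In fact, the surjectivity argument in that proof already counts the possible stub-pairings and arrives at $2^{n-2}$ directly, so one could alternatively cite that counting step without invoking the full bijection. Either way, the proof reduces to a single sentence pointing to Theorem~\ref{thm:bijection}.
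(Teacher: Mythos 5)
Your proof is correct and takes exactly the approach the paper intends: the corollary is stated with no written proof precisely because it follows immediately from the bijection $\lambda$ of Theorem~\ref{thm:bijection}. Your observation that the surjectivity argument already performs the $2^{n-2}$ count directly is accurate and a fair alternative citation, but both routes are really the same one-line deduction.
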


 Now that we have a bijection between full DAGs and binary strings, we show that the Boolean poset structure of the binary strings is represented in the DAGs by an edge operation.   

 \begin{definition} \label{def:nested}
     Give a DAG $G$ with a linear order on its vertices, and a pair of edges $(a,d)$, $(b,c)$ satisfying $a<b<c<d$, we say such a pair of edges is \emph{nested}. 
 \end{definition}

 \begin{definition} \label{def:interchange}
     Give a DAG $G$ with a nested pair of edges $(a,d)$, $(b,c)$, define the \emph{interchange operation} to be the mapping $G\mapsto G'$, where $G'$ is defined by vertices $V(G')=V(G)$ and edges $E(G')=E(G)\setminus \{(a,d),(b,c)\}\cup \{(a,c),(b,d)\}$. 
 \end{definition}

 \begin{definition} \label{def:crossed}
     Give a DAG $G$ with a linear order on its vertices, and a pair of edges $(a,c)$, $(b,d)$ satisfying $a<b<c<d$, we say such a pair of edges is \emph{crossed} or \emph{twisted} or \emph{interchanged}. 
   \end{definition}

 \begin{definition} \label{def:reverse-interchange}
     Give a DAG $G'$ with a crossed pair of edges $(a,c)$, $(b,d)$, define the \emph{reverse interchange operation} to be the mapping $G'\mapsto G$, where $G$ is defined by vertices $V(G)=V(G')$ and edges $E(G)=E(G')\setminus \{(a,c),(b,d)\}\cup \{(a,d),(b,c)\}$. 
 \end{definition}

 \begin{remark}
     What we refer to in this paper as the interchange operation is also known as a double edge swap in the contexts of graph sampling~\cite{SIAMMCMC}. 
     The specialization to directed graphs continues below.
   \end{remark}

   \begin{example}\label{ex:interchange}
     Considering Figure~\ref{fig:n=4Bijection}, the right-most DAG labeled by the string $[1,0]$ has edge $(3,4)$ nesting in edge $(2,5)$.
     Edges $(1,3)$ and $(2,5)$ are crossed.
     Note that the DAGs labeled by strings $[1,1]$ and $[1,0]$ are related by the interchanging operations.
     \end{example}

 \begin{lemma} \label{lemma:closure}
     $\mathscr{F}_{n,k}$ is closed under the interchange operation.
 \end{lemma}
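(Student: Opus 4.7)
The plan is to verify that an interchange preserves every property that defines membership in $\mathscr{F}_{n,k}$: the vertex set, the ordering of edges, acyclicity, and the in-/out-degree sequence.

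First I would fix $G\in\mathscr{F}_{n,k}$ and a nested pair $(a,d),(b,c)\in E(G)$ with $a<b<c<d$, and let $G'$ be the result of the interchange, so $E(G')=E(G)\setminus\{(a,d),(b,c)\}\cup\{(a,c),(b,d)\}$. The vertex set is unchanged by definition, so $V(G')=[n+1]$. Because $a<c$ and $b<d$, the two new edges point from a smaller-labeled vertex to a larger-labeled vertex, matching the convention for DAGs in $\mathscr{F}_{n,k}$; combined with the fact that removing and adding edges between existing vertices that respect the linear order on $[n+1]$ cannot introduce a directed cycle, this shows $G'$ is a DAG with the correct orientation convention.

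Next I would check that the degree sequence is preserved, which is the heart of the argument. At vertex $a$, the interchange removes the outgoing edge $(a,d)$ and adds the outgoing edge $(a,c)$, leaving $\outd_a$ unchanged and $\ind_a$ untouched. Symmetrically, at vertex $b$ one outgoing edge $(b,c)$ is swapped for $(b,d)$; at vertex $c$ one incoming edge $(b,c)$ is swapped for $(a,c)$; and at vertex $d$ one incoming edge $(a,d)$ is swapped for $(b,d)$. No other vertex has any incident edge altered. Hence the full in-degree and out-degree vectors of $G'$ match those of $G$. In particular $\outd_1(G')=\outd_1(G)=k$, and every internal vertex retains in- and out-degree $2$, so $G'$ is full.

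Finally, since the source/sink structure is determined by the degree sequence (vertex $1$ has in-degree $0$ and vertex $n+1$ has out-degree $0$, which are preserved), $G'$ still has $1$ as its unique source and $n+1$ as its unique sink. Assembling these observations yields $G'\in\mathscr{F}_{n,k}$. I do not anticipate a genuine obstacle here; the only subtlety worth flagging is the edge case in which $G$ has multiple copies of $(a,d)$ or $(b,c)$, which should be handled by treating $E(G)$ as a multiset and removing a single designated copy of each edge in the swap, so that the degree bookkeeping above still goes through unchanged.
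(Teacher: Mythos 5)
Your proof is correct and takes essentially the same route as the paper: observe that only $a,b,c,d$ have incident edges altered, and at each of those four vertices the interchange swaps one out-edge for another (or one in-edge for another), so all in- and out-degrees are preserved. You add some useful extra care — explicitly checking acyclicity, the edge-ordering convention, and the multi-edge case — that the paper leaves implicit, but the core argument is identical.
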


 \begin{proof}
     Let $G\in\mathscr{F}_{n,k}$. 
 Suppose $G$ has two edges $(a,d)$, $(b,c)\in E(G)$ satisfying $a<b<c<d$. 
 Let $G'$ be the DAG obtained by performing an interchange on $G$. 
 Then $G'$ still has $n+1$ vertices, and the edges incident to vertices other than $a,b,c,d$ are unchanged, preserving the in- and out-degrees at these vertices. 
 Thus we only need verify that the in- and out-degrees for $a,b,c,d$ are also unchanged, which follows directly from the definition of the interchange operation.
 \end{proof}  

 Next, we prove that the interchange operation spans the entirety of $\mathscr{F}_{n,3}$. We first introduce a useful fact about DAGs in $\mathscr{F}_{n,3}$. 

 \begin{lemma} \label{lemma:UniqueOverpass}
     Let $G\in\mathscr{F}_{n,3}$ and let $i$ be an interior vertex of $G$. 
     Then there is exactly one edge in $E(G)$ that passes vertex $i$. 
     That is, there is a unique edge $(a,d)$ so satisfying $a<i<d$. 
 \end{lemma}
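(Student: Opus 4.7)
The plan is to prove this by a straightforward degree-counting argument using the prescribed in- and out-degree sequence for $\mathscr{F}_{n,3}$. Let $i$ be an interior vertex, so $2\le i\le n$. Because every edge of $G$ is of the form $(a,d)$ with $a<d$, each edge whose tail satisfies $a<i$ must have its head either in $\{2,\dots,i\}$ (contributing to the in-degree of some vertex $\le i$) or in $\{i+1,\dots,n+1\}$ (contributing a passing edge over $i$). The key identity is therefore
\[
\#\{(a,d)\in E(G) : a<i<d\} \;=\; \sum_{j=1}^{i-1}\outd_j \;-\; \sum_{j=2}^{i}\ind_j.
\]

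Next I would evaluate both sums using the degree sequence from Definition~\ref{def:Fnk}: for $G\in\mathscr{F}_{n,3}$, the out-degree sequence is $(3,2,2,\dots,2,0)$ and the in-degree sequence is $(0,2,2,\dots,2,3)$. Substituting gives
\[
\sum_{j=1}^{i-1}\outd_j = 3 + 2(i-2) = 2i-1,
\qquad
\sum_{j=2}^{i}\ind_j = 2(i-1) = 2i-2,
\]
so the difference is exactly $1$. This proves that there is a unique edge $(a,d)$ with $a<i<d$. The argument applies uniformly for all interior $i$; the boundary cases $i=2$ and $i=n$ are captured because the source out-degree $3$ appears in the first sum for $i=2$, and the endpoint $n+1$ is excluded from the second sum for $i=n$.

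I do not expect any real obstacle here; the only subtle point is justifying the partition identity above, which is just the observation that an edge $(a,d)$ with $a\le i-1$ is counted in $\sum_{j=2}^{i}\ind_j$ exactly when $d\le i$ and otherwise crosses strictly over $i$. Since Lemma~\ref{lemma:spinetc} and Corollary~\ref{cor:spine} are already in place, I would phrase the proof as a direct computation rather than invoking any structural description of elements of $\mathscr{F}_{n,3}$.
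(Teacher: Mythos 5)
Your degree-counting argument is correct, and it is a genuinely different (and arguably more direct) proof than the one in the paper. The paper's proof of Lemma~\ref{lemma:UniqueOverpass} proceeds structurally: it invokes the bijection with binary strings from Theorem~\ref{thm:bijection}, explicitly identifies the index $j_m$ and the corresponding edge $(j_m+1, j_{m+1}+2)$ as the unique overpass edge, and rules out any other candidate by examining the three types of edges in the constructed edge set. Your proof instead reinterprets the quantity
\[
\sum_{j=1}^{i-1}\outd_j - \sum_{j=2}^{i}\ind_j
\]
directly as the number of edges straddling $i$: an edge $(a,d)$ with $a<d$ is counted by the first sum precisely when $a<i$, and counted by the second sum precisely when $d\le i$; since $d\le i$ forces $a<i$, the difference is exactly the number of edges with $a<i<d$. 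Substituting the degree sequence gives $1$. Interestingly, this same arithmetic already appears in the paper's proof of Lemma~\ref{lemma:spinetc} (where the first in-degree sum starts at $j=1$, but $\ind_1=0$ so it is identical), though there it is used only as an upper bound ("at most one in-stub of $i$ can reach back past $i-1$"). Your version gives both the upper and lower bound simultaneously and does not need the bijection at all, which makes it cleaner and more self-contained; the trade-off is that the paper's proof also yields an explicit description of the overpass edge, which is then quoted in Proposition~\ref{prop:OverpassDeterminesInflow} and Lemma~\ref{lemma:0interchangeable}.
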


 \textbf{Proof:} From Theorem~\ref{thm:bijection}, we know that there exists a length $n-2$ binary string $\b$ so that $G=G_\b$ with  
 \[ 
 J_\b=\{i\in[n-2] \ |\ b_i=1\}=\{j_1<\cdots<j_r\}. 
 \]
 For edge case convenience, define $j_0:=0$, $j_{r+1}:=n-1$ and 
 \[ 
 \widehat J_\b:=\{j_0=0<j_1<\cdots<j_r<j_{r+1}=n-1\}. 
 \]
 Note that this ensures $\widehat J_\b$ is always nonempty, even if $J_\b$ is empty. 
 Now let $m=\max\{l\in \widehat J_\b \ |\ j_l<i-1\}$. 
 That is, $j_m$ is the largest index in $\b$ where there is a $1$ left of $i$.

 Then $j_m<i-1$ by construction, so $j_m+1<i$. 
 Our maximum construction also ensures $j_{m+1}>i-1$, so $j_{m+1}+2>i$. 
 Thus the edge $(j_m +1,j_{m+1}+2)$ passes over vertex $i$, and we know $(j_m +1,j_{m+1}+2)\in E(G_\b)$ by the construction in Theorem~\ref{thm:bijection}. 

 To prove uniqueness, we once again turn to the construction of $E(G_\b)$ in the proof of Theorem~\ref{thm:bijection}. 
 Recall that we have three types of edges:
 \begin{enumerate}
     \item[(i)] Spine edges: $\{(i,i+1) \ |\ i\in[n] \} \cup \{(1,2),(n,n+1)\}$
     \item[(ii)] $0$-type edges: $\{(l+1,l+2) \ |\ b_l=0, \text{for } 1\leq l\leq n-2\}$
     \item[(iii)] $1$-type edges: $\{(1,j_1+2),(j_1+1,j_2+2),\ldots,(j_{r-1}+1,j_r+2),(j_r+1,n+1)\}$
 \end{enumerate}
 Since both (i) and (ii) connect consecutive vertices, no edge of those types can pass over an vertex. 
 The only other edges in $E(G_\b)$ are of the form $(j_l +1,j_{l+1}+2)$ for $j_l\in \widehat J_\b$. 
 But if $l\neq m$, then $(j_l +1,j_{l+1}+2)$ doesn't pass over $i$. 

 \qed 

 \begin{proposition} \label{prop:MinInflow}
     Let $G\in \mathscr{F}_{n,3}$ and let $i$ be an interior vertex. 
     Then any integral flow $f$ on $G$ with netflow $\w_n$ sends at most $i-2$ units of flow into vertex $i$. 
 \end{proposition}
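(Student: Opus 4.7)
The plan is to use a simple cut argument combined with the uniqueness result in Lemma~\ref{lemma:UniqueOverpass}. Let $f$ be any integral flow on $G$ satisfying the netflow vector $\w_n = (0,1,\ldots,1,-(n-1))$, and let $i$ be an interior vertex, so $2 \leq i \leq n$.

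First, I would consider the cut separating $S := \{1,2,\ldots,i-1\}$ from $\overline{S} := \{i,i+1,\ldots,n+1\}$. Because $G$ is a DAG with edges oriented from smaller to larger vertex labels, every edge either lies entirely within $S$, entirely within $\overline{S}$, or crosses the cut from $S$ to $\overline{S}$. Summing the flow conservation equations $(\text{out-flow} - \text{in-flow} = a_k)$ over $k \in S$, the internal edges cancel, leaving
\[
\sum_{(a,d) \in E,\, a \in S,\, d \in \overline{S}} f(a,d) \;=\; \sum_{k=1}^{i-1} (\w_n)_k \;=\; 0 + \underbrace{1+\cdots+1}_{i-2} \;=\; i-2.
\]

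Next, I would classify the edges crossing this cut. Each such edge $(a,d)$ with $a < i \leq d$ either terminates at $i$ (so $d=i$) or overpasses vertex $i$ (so $a < i < d$). By Lemma~\ref{lemma:UniqueOverpass}, there is exactly one overpassing edge at an interior vertex $i$; call it $(a^\ast, d^\ast)$. Therefore
\[
(\text{in-flow at } i) + f(a^\ast,d^\ast) \;=\; \sum_{(a,d):\, a \in S,\, d \in \overline{S}} f(a,d) \;=\; i-2.
\]

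Finally, since $f \geq 0$ entrywise, in particular $f(a^\ast,d^\ast) \geq 0$, so the in-flow at $i$ is at most $i-2$, as claimed. The argument is quite short; the only non-obvious ingredient is Lemma~\ref{lemma:UniqueOverpass}, which guarantees that precisely one edge of $G$ overpasses any interior vertex, and this has already been established for DAGs in $\mathscr{F}_{n,3}$ via the binary-string bijection. No real obstacle is expected; the main care needed is to verify the edge case $i=2$ (where the bound says in-flow is $0$, which is forced because $(\w_n)_1 = 0$ gives zero out-flow from vertex $1$, consistent with no overpassing edge being needed).
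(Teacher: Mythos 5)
Your proof is correct and rests on the same essential counting as the paper's, but you make it rigorous where the paper is quite terse. The paper's proof ("Suppose $f$ sends all available flow through the spine\ldots This accounts for all possible outflow before vertex $i$.") is really an informal cut argument: it observes that the total outflow available from vertices $1,\ldots,i-1$ under the netflow $\w_n$ is $i-2$. You make this explicit by summing the conservation equations over the initial segment $S=\{1,\ldots,i-1\}$, observing that internal edges cancel, and concluding that the total flow crossing the cut equals $\sum_{k=1}^{i-1}(\w_n)_k = i-2$. That is a cleaner and more complete version of the same idea. One small remark: invoking Lemma~\ref{lemma:UniqueOverpass} is slightly more than the proposition requires. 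The inequality follows already from the fact that the in-flow at $i$ is a sub-sum of the flow crossing the cut and all flow values are non-negative; you do not need to know how many overpassing edges there are, only that they carry non-negative flow. The uniqueness is, however, exactly what is needed for the exact-value statement in Proposition~\ref{prop:OverpassDeterminesInflow}, and your computation essentially establishes that result as well, so no harm done.
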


 \begin{proof}
     Suppose $f$ sends all available flow through the spine. Then vertices $2,\ldots,i-1$ each contribute one unit of flow into vertex $i$. This accounts for all possible outflow before vertex $i$.
 \end{proof}

 \begin{proposition} \label{prop:OverpassDeterminesInflow}
    Let $G\in \mathscr{F}_{n,3}$ and let $i$ be an interior vertex. 
    Let $(a,d)$ be the unique edge passing over $i$ as in Lemma~\ref{lemma:UniqueOverpass}. 
    Then any integral flow $f$ on $G$ with netflow $\w_n$ has $i-2-f(a,d)$ units of flow entering vertex $i$.  
 \end{proposition}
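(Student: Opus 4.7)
The plan is to apply conservation of flow to the vertex set $S=\{1,2,\ldots,i-1\}$ and read off both sides of the resulting cut equation. Because the netflow vector is prescribed to be $\w_n$, summing the local conservation equations $\outd_v$--$\ind_v$-style over $v\in S$ will, after the usual telescoping cancellation on interior edges of $S$, produce a single equation relating the inflow at $i$ to the flow on edges that skip over $i$.

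First I would compute the left-hand side. Since $\w_n=(0,1,\ldots,1,-(n-1))$, summing its entries over $v=1,\ldots,i-1$ yields $0+1+1+\cdots+1=i-2$. This is valid for every interior vertex $i$, i.e.\ $2\le i\le n$.

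Next I would compute the right-hand side. Summing $(\text{outflow}-\text{inflow})$ over $v\in S$, every edge $(u,v)$ with both endpoints in $S$ contributes $+f(u,v)$ at $u$ and $-f(u,v)$ at $v$, hence $0$. Since $G$ is a DAG whose edges point from lower- to higher-indexed vertices, there are no edges from $V\setminus S$ into $S$, so only edges with tail in $S$ and head outside $S$ survive. Such an edge $(u,w)$ satisfies $u<i\le w$, and splits into two kinds:
\begin{itemize}
\item $w=i$: these contribute the total inflow into vertex $i$;
\item $w>i$: these are precisely the edges passing over $i$ in the sense of Lemma~\ref{lemma:UniqueOverpass}.
\end{itemize}
By Lemma~\ref{lemma:UniqueOverpass}, the second class consists of the single edge $(a,d)$, contributing $f(a,d)$.

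Equating the two sides gives $\text{(inflow at } i) + f(a,d) = i-2$, i.e.\ the inflow at $i$ equals $i-2-f(a,d)$, as desired. The argument is a routine ``cut = flow across cut'' computation, and I do not anticipate any real obstacle; the only subtlety is checking that $\w_n$ contributes exactly $i-2$ (which uses $\w_n[1]=0$) and that Lemma~\ref{lemma:UniqueOverpass} correctly identifies $(a,d)$ as the only edge whose tail lies in $S$ and whose head lies in $V\setminus(S\cup\{i\})$.
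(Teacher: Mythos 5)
Your proof is correct and takes essentially the same approach as the paper: both compute the net flow across the cut $S=\{1,\ldots,i-1\}$, observing that the netflow entries over $S$ sum to $i-2$ and that by Lemma~\ref{lemma:UniqueOverpass} the only flow leaving $S$ that bypasses vertex $i$ travels on $(a,d)$. The paper phrases this informally (``all flow coming out of vertices $2,\ldots,i-1$ not sent on edge $(a,d)$ must pass through vertex $i$''), whereas you spell out the telescoping conservation sum explicitly, but the underlying argument is identical.
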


 \begin{proof}
      Since $(a,d)$ is the unique edge passing over $i$, all flow coming out of vertices $2,\ldots,i-1$ not sent on edge $(a,d)$ must pass through vertex $i$. 
 Since the netflow vector is $\w_n$, each of these vertices contributes exactly one unit of flow. 
 \end{proof}

 \begin{proposition} \label{prop:ConsecutiveCrosses}
   
     Let $G\in\mathscr{F}_{n,3}$ with crossed edge pair $(a,c)$, $(b,d)$. 
     Then $c=b+1$. 
     That is, the middle two vertices of an crossed pair of edges must be consecutive. 
 \end{proposition}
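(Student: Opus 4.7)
The plan is to prove this by contradiction, leveraging Lemma~\ref{lemma:UniqueOverpass} which asserts that in any $G\in\mathscr{F}_{n,3}$ each interior vertex is passed over by exactly one edge. Suppose, toward contradiction, that the crossed pair $(a,c),(b,d)$ satisfies $c>b+1$, so that $c\geq b+2$.

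With this assumption, I would set $i:=b+1$ and check that $i$ is an interior vertex of $G$. Since $1\leq a<b$ forces $b\geq 2$, we have $i=b+1\geq 3\geq 2$; and since $i<c<d\leq n+1$, we have $i\leq n-1\leq n$. Thus $i\in\{2,\ldots,n\}$ is an interior vertex, and Lemma~\ref{lemma:UniqueOverpass} applies.

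The main (and essentially only) step is then to verify that \emph{both} edges $(a,c)$ and $(b,d)$ pass over $i=b+1$. For $(a,c)$: $a<b<b+1=i$ gives $a<i$, and $i=b+1<c$ by the contradiction hypothesis, so $a<i<c$. For $(b,d)$: trivially $b<b+1=i$, and $i=b+1<c<d$ gives $i<d$. Hence two distinct edges pass over the interior vertex $i$, contradicting Lemma~\ref{lemma:UniqueOverpass}. Therefore $c=b+1$, as desired.

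There is no real obstacle here; the entire content of the statement is encapsulated in the uniqueness half of Lemma~\ref{lemma:UniqueOverpass}, and the argument reduces to a short inequality check. The only thing to be careful about is confirming that $i=b+1$ genuinely lies in the interior range $\{2,\ldots,n\}$, which follows immediately from the constraints $1\leq a<b<c<d\leq n+1$.
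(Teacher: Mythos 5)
Your proof is correct and follows essentially the same approach as the paper: both argue by contradiction, observe that if $c>b+1$ then both $(a,c)$ and $(b,d)$ pass over the interior vertex $b+1$, and invoke the uniqueness in Lemma~\ref{lemma:UniqueOverpass}. The only difference is that you spell out why $b+1$ is an interior vertex, which the paper leaves implicit.
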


 \begin{proof}
     Assume for the sake of contradiction that $G\in\mathscr{F}_{n,3}$ has interchange pairs $(a,c)$, $(b,d)$ where $c>b+1$. 
     Then if we look at vertex $b+1$ (which is interior), we have that $(a,c)$ satisfies $a<b+1<c$ and $(b,d)$ satisfies $b<b+1<d$. 
     But this means there are two distinct edges passing over vertex $b+1$, in violation of Lemma~\ref{lemma:UniqueOverpass}. 
 \end{proof}

 \begin{lemma} \label{lemma:0interchangeable}
     Let $G_\b\in\mathscr{F}_{n,3}$ with $\b=b_1\cdots b_{n-2}$. Suppose $b_i=0$. 
     Then $(i+1,i+2)\in E(G_\b)$, and is nested within exactly one other edge in $E(G_\b)$.

 \end{lemma}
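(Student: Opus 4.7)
The plan is to establish both claims by appealing primarily to Lemma~\ref{lemma:UniqueOverpass}.

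The inclusion $(i+1,i+2) \in E(G_\b)$ follows directly from item (ii) in the construction of $E(G_\b)$ within Theorem~\ref{thm:bijection}: when $b_i = 0$, a second edge from vertex $i+1$ to $i+2$ is added (a $0$-type edge running parallel to the spine edge $e_{i+1}$). So the first claim is immediate.

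For the nesting claim, I would separate uniqueness from existence. For uniqueness, note that $1 \le i \le n-2$ forces both $i+1$ and $i+2$ to be interior vertices of $G_\b$. Any edge $(a,d)$ strictly nesting $(i+1,i+2)$ satisfies $a < i+1 < i+2 < d$, and in particular $a < i+1 < d$, so $(a,d)$ passes over vertex $i+1$. By Lemma~\ref{lemma:UniqueOverpass} there is at most one such edge; hence at most one edge of $G_\b$ nests $(i+1,i+2)$.

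For existence, I would use the explicit description of overpass edges from the proof of Lemma~\ref{lemma:UniqueOverpass}: the unique edge passing over an interior vertex $v$ is the $1$-type edge $(j_m+1,\,j_{m+1}+2)$, where $m = \max\{l \in \widehat J_\b : j_l < v-1\}$. Applying this to $v = i+1$ and to $v = i+2$ gives two overpasses, selected by the conditions $j_m < i$ and $j_m < i+1$ respectively. The point is that $b_i = 0$ means $i \notin J_\b$, so no $j_l$ equals $i$, and therefore $j_m < i+1$ is equivalent to $j_m < i$. Hence the two choices of $m$ coincide and produce the same edge $(a^*,d^*)$ with $a^* < i+1$ and $d^* > i+2$. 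Combining with uniqueness, this edge is the sole nester of $(i+1,i+2)$.

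The main obstacle is the existence step: we must rule out the possibility that the unique overpass of $i+1$ terminates precisely at vertex $i+2$ (and therefore fails to nest the doubled edge). The hypothesis $b_i = 0$ is exactly what prevents this, since a $1$ at position $i$ in $\b$ would introduce an edge of the form $(\cdot, i+2)$ ending at $i+2$, whereas $b_i = 0$ guarantees the overpass of $i+1$ continues strictly past $i+2$.
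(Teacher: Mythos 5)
Your proposal is correct and follows essentially the same approach as the paper: derive $(i+1,i+2)\in E(G_\b)$ from the bijective construction, then invoke Lemma~\ref{lemma:UniqueOverpass} at vertex $i+1$ and use $b_i=0$ to rule out the overpass terminating at $i+2$. The only minor difference is in that last step: the paper observes that $b_i=0$ forces both in-stubs at $i+2$ to be filled by consecutive edges (so no $1$-type edge can end there), while you argue via the explicit index formula from the proof of Lemma~\ref{lemma:UniqueOverpass} that the overpasses of $i+1$ and $i+2$ coincide when $i\notin J_\b$ — both are clean ways to close the gap.
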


 \begin{proof}
     Since $b_i=0$, it follows from the bijective construction in Theorem~\ref{thm:bijection} that $(i+1,i+2)\in E(G_\b)$. Then we can apply Lemma~\ref{lemma:UniqueOverpass} to vertex $i+1$ to get a unique edge $(j_l+1,j_{l+1}+2)$ satisfying $j_l+1<i+1<j_{l+1}+2$. 

 Since $\b_i=0$, both in-stubs for $i+2$ are filled by consecutive edges (Type (i) and Type (ii)). 
 Thus $j_{l+1}+2\neq i+2$, so we have $j_l+1<i+1<i+2<j_{l+1}+2$, and the edges $(i+1,i+2)$ and $(j_l+1,j_{l+1}+2)$ are nested. 
 \end{proof} 

 \begin{lemma} \label{lemma:interchange=0to1}
     Let $\b\in(\Z/2\Z)^{n-2}$ with $b_i=0$. Define $\b'$ to be the binary string obtained by changing $\b$ to have a $1$ at index $i$. 
     Define $G'$ to be the DAG obtained by interchanging the edges $(i+1,i+2)$ and $(j_l+1,j_{l+1}+2)$ in $G$ (as in Lemma~\ref{lemma:0interchangeable}). 
     Then $G_{\b'}=G'$. 

     In short, changing a $0$ to a $1$ is ``the same" as interchanging the edge associated with the $0$. 
 \end{lemma}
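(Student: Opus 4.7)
The plan is to directly compare the edge multisets $E(G_\b)$ and $E(G_{\b'})$ using the explicit construction in Theorem~\ref{thm:bijection}, identify precisely which edges differ, and verify that this difference is exactly what the interchange operation on the nested pair from Lemma~\ref{lemma:0interchangeable} produces.

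First, I would set up notation using the three edge types from the proof of Theorem~\ref{thm:bijection}. Writing $J_\b = \{j_1 < \cdots < j_r\}$ with the convention $j_0 = 0$, $j_{r+1} = n-1$, I would let $l$ be the unique index with $j_l < i < j_{l+1}$ (well-defined since $b_i = 0$ means $i \notin J_\b$). Then $J_{\b'} = J_\b \cup \{i\}$, and the indices flanking $i$ in $\widehat{J}_{\b'}$ are still $j_l$ and $j_{l+1}$. The spine edges (type (i)) and initial/final double edges are identical for $\b$ and $\b'$, and the type (ii) 0-edges agree everywhere except at position $i$: the edge $(i+1, i+2)$ appears as a type (ii) edge in $E(G_\b)$ but not in $E(G_{\b'})$.

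Next, I would compare the type (iii) 1-edges. For $\b$, the type (iii) edge spanning past position $i$ is exactly $(j_l + 1, j_{l+1} + 2)$, the edge from Lemma~\ref{lemma:0interchangeable}. In $\b'$, inserting $i$ into the list of 1-positions splits this into the two consecutive edges $(j_l + 1, i+2)$ and $(i+1, j_{l+1} + 2)$; all other type (iii) edges are identical. Therefore, as multisets,
\begin{align*}
E(G_\b) \setminus E(G_{\b'}) &= \{(i+1, i+2),\ (j_l+1, j_{l+1}+2)\}, \\
E(G_{\b'}) \setminus E(G_\b) &= \{(j_l+1, i+2),\ (i+1, j_{l+1}+2)\}.
\end{align*}

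Finally, I would verify this matches the interchange. Setting $a = j_l + 1$, $b = i+1$, $c = i+2$, $d = j_{l+1} + 2$, the inequalities $j_l < i < j_{l+1}$ give $a < b < c < d$, so $(a,d)$ and $(b,c)$ form the nested pair of Lemma~\ref{lemma:0interchangeable}, and by Definition~\ref{def:interchange} the interchange replaces them by $(a,c) = (j_l+1, i+2)$ and $(b,d) = (i+1, j_{l+1}+2)$, which is exactly the symmetric difference computed above. Hence $G' = G_{\b'}$.

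The only subtlety, and the place where care is required, is the multiplicity bookkeeping: in $G_\b$ there are two copies of $(i+1, i+2)$, namely the spine edge $e_{i+1}$ and the type (ii) edge, and the interchange is applied to the type (ii) copy (leaving the spine edge untouched), so that $G_{\b'}$ still contains the spine edge $e_{i+1}$ as required. Edge cases where $l = 0$ or $l+1 = r+1$ require recalling the explicit form of the first and last type (iii) edges, $(1, j_1+2)$ and $(j_r+1, n+1)$, but the argument is otherwise identical.
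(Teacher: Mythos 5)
Your proof is correct and follows essentially the same route as the paper: both arguments compare $E(G_\b)$ and $E(G_{\b'})$ via the explicit three-type edge decomposition from Theorem~\ref{thm:bijection}, identify the symmetric difference as the pair $\{(i+1,i+2),(j_l+1,j_{l+1}+2)\}$ swapped for $\{(j_l+1,i+2),(i+1,j_{l+1}+2)\}$, and match that to Definition~\ref{def:interchange}. Your closing remarks on multiplicity (distinguishing the type-(ii) copy of $(i+1,i+2)$ from the spine edge $e_{i+1}$) and on the uniform handling of the endpoint cases via $j_0=0$, $j_{r+1}=n-1$ are slightly more explicit than the paper's version, but the underlying argument is the same.
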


 \begin{proof}
     Since $V(G')=V(G_{\b'})=[n+1]$, we need only show that $E(G')=E(G_{\b'})$. By Lemma~\ref{lemma:0interchangeable} and the definition of interchange, we have that 
 \begin{align*}
     E(G') &= E(G)/\{(i+1,i+2),(j_a+1,j_{a+1}+2)\} 
     \cup \{(i+1,j_{l+1}+2),(j_l+1,i+2)\}.
 \end{align*}

 Now consider $\b'$. 
 By assumption, the list of indices where $\b'$ has $1$s is
 \[ 
 J_{\b'} = J_\b\cup\{i\} = \{\cdots j_l < i< j_{l+1}<\cdots\}. 
 \]
 It follows from the bijective construction that $(i+1,j_{l+1}+2),(j_l+1,i+2)\in E(G_{\b'})$. 
 Additionally, since $b'_i\neq 0$, we have $(i+1,i+2)\not\in E(G_{\b'})$.
 Finally since $j_l$ and $j_{l+1}$ are not the indices of consecutive $1$s in $\b'$, $(j_l+1,j_{l+1}+2)\not\in E(G_{\b'})$.
 Since no other indices of $\b$ were changed to get $\b'$, we have that   
 \begin{align*}
     E(G_{\b'}) &= E(G)/\{(i+1,i+2),(j_l+1,j_{l+1}+2)\} 
     \cup \{(i+1,j_{l+1}+2),(j_l+1,i+2)\} \\
     &= E(G').
 \end{align*}
 \end{proof}  

 \begin{corollary} \label{cor:3coverage}
     Every DAG in $\mathscr{F}_{n,3}$ is obtainable by performing interchange operations on the DAG $G$ defined by
     \begin{align*}
         V(G) &= [n+1] \\
         E(G) &= \{(i,i+1)^2 \ |\ i\in[n]\} \cup \{(1,n+1)\}.
     \end{align*}
   
     That is, the DAG $G$ in correspondence with the binary string $0\cdots0$. 
 \end{corollary}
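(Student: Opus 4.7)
The plan is to apply induction on the number of $1$s in the binary string associated with the target DAG, using Lemma~\ref{lemma:interchange=0to1} as the driving mechanism. First, I would observe that the DAG $G$ described in the statement is precisely $G_{\mathbf{0}}$, the DAG associated under the bijection of Theorem~\ref{thm:bijection} to the all-zeros string $\mathbf{0} = 0\cdots 0 \in (\Z/2\Z)^{n-2}$: indeed, the Type (i) and Type (ii) edges from the proof of Theorem~\ref{thm:bijection} together yield two copies of each spine edge $(i,i+1)$, and the unique Type (iii) edge collapses to the single long edge $(1,n+1)$.

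Next, for an arbitrary target $G_\beta \in \mathscr{F}_{n,3}$, I would induct on the Hamming weight $r = |J_\beta|$ of $\beta$. The base case $r=0$ is trivial since $G_\beta = G_{\mathbf{0}} = G$, reached by an empty sequence of interchanges. For the inductive step with $r \geq 1$, I would pick any index $i \in J_\beta$ (say the smallest), let $\beta'$ be obtained from $\beta$ by changing $b_i$ from $1$ to $0$, and note that $\beta'$ has weight $r-1$. By the inductive hypothesis, there is a sequence of interchanges taking $G_{\mathbf{0}}$ to $G_{\beta'}$. Then Lemma~\ref{lemma:interchange=0to1}, applied to $\beta'$ (which has a $0$ at index $i$) and its flip $\beta$, exhibits a single interchange operation on the pair of nested edges of $G_{\beta'}$ guaranteed by Lemma~\ref{lemma:0interchangeable} that produces $G_\beta$. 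Concatenating these interchanges gives the desired sequence.

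I do not expect any serious obstacle: the heavy lifting has already been done in Lemmas~\ref{lemma:0interchangeable} and~\ref{lemma:interchange=0to1}, which together encode the fact that the Boolean cover relation in $(\Z/2\Z)^{n-2}$ is realized by a single interchange of the corresponding DAGs. The only minor subtlety to record is the identification of the DAG $G$ in the statement with $G_{\mathbf{0}}$, which follows directly from unpacking the edge construction in Theorem~\ref{thm:bijection} when $J_\beta = \emptyset$: the third bullet produces only the edge $(1,n+1)$, and the second bullet doubles every spine edge $(l+1,l+2)$ for $l = 1,\ldots,n-2$, which together with the doubled endpoint edges $(1,2)$ and $(n,n+1)$ from the first bullet yields exactly $\{(i,i+1)^2 : i \in [n]\} \cup \{(1,n+1)\}$.
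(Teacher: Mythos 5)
Your proof is correct and takes essentially the same approach as the paper: both reduce the claim to repeated application of Lemma~\ref{lemma:interchange=0to1}, flipping one $0$ to a $1$ at a time along a sequence of binary strings from $\mathbf{0}$ to $\b$. You simply package this as an explicit induction on Hamming weight and also take the extra care of verifying that the DAG $G$ in the statement is indeed $G_{\mathbf{0}}$ under the bijection, which the paper states without unpacking.
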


 \begin{proof}
     Let $G_\b\in\mathscr{F}_{n,3}$, where $\b\in(\Z/2\Z)^{n-2}$ is the binary string corresponding to $G_\b$. 
 Then perform interchanges to $G_{0\cdots0}$, as in Lemma~\ref{lemma:interchange=0to1}, at every index where $\b$ has a 1. 

 Note that we can choose any order to perform the necessary interchanges. 
 This is ensured by the bijection with binary strings and previous lemma. 
 \end{proof} 

 \begin{corollary} \label{cor:3poset}
     The interchange operation induces a partial order on $\mathscr{F}_{n,3}$ that agrees with the Boolean partial order on binary strings of length $n-2$. 
 \end{corollary}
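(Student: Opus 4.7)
The plan is to define a relation $\leq$ on $\mathscr{F}_{n,3}$ by declaring $G \leq G'$ whenever $G'$ is obtainable from $G$ by a (possibly empty) sequence of interchange operations on nested edge pairs, and then to transfer this relation to the componentwise order on $(\Z/2\Z)^{n-2}$ via the bijection $\lambda$ of Theorem~\ref{thm:bijection}. Reflexivity is immediate from the empty sequence, transitivity from concatenation of sequences, and by Lemma~\ref{lemma:closure} every intermediate DAG stays in $\mathscr{F}_{n,3}$; thus $\leq$ is a well-defined preorder.

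Next, I would use Lemma~\ref{lemma:interchange=0to1} to translate single interchange steps into coordinate flips: a single interchange on a nested pair in $G_\b$ produces $G_{\b'}$, where $\b'$ differs from $\b$ in exactly one entry, changed from $0$ to $1$. Iterating, $G_\b \leq G_{\b'}$ implies $b_i \leq b'_i$ for every $i$, so $\b$ precedes $\b'$ in the Boolean order. Conversely, given $\b \leq \b'$ componentwise, one applies Lemma~\ref{lemma:interchange=0to1} once at each coordinate where $\b$ and $\b'$ differ, in any order (as noted in Corollary~\ref{cor:3coverage}), to transform $G_\b$ into $G_{\b'}$, establishing $G_\b \leq G_{\b'}$ in $\mathscr{F}_{n,3}$.

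Antisymmetry of $\leq$ then follows from antisymmetry of the Boolean order: if $G \leq G'$ and $G' \leq G$ both hold, then the corresponding binary strings satisfy $\b \leq \b'$ and $\b' \leq \b$ componentwise, forcing $\b = \b'$ and $G = G'$ by injectivity of $\lambda$. Thus $\leq$ is a partial order on $\mathscr{F}_{n,3}$, and $\lambda$ is promoted to an isomorphism of posets onto the Boolean algebra $(\Z/2\Z)^{n-2}$.

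The proof is essentially bookkeeping built on top of Lemma~\ref{lemma:interchange=0to1} and Corollary~\ref{cor:3coverage}, which already supply the combinatorial content: interchanges in $\mathscr{F}_{n,3}$ correspond exactly to bit flips $0 \mapsto 1$, and such flips may be applied in any order. The only point requiring careful articulation is the separation between the preorder defined by sequences of interchanges on DAGs and its identification with componentwise comparison of binary strings, so there is no genuine obstacle beyond assembling earlier results.
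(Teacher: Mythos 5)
The proposal is essentially correct and matches the paper's implicit reasoning (the paper states this corollary without a written proof). However, one step deserves more justification: you assert, citing Lemma~\ref{lemma:interchange=0to1}, that a single interchange on a nested pair in $G_\b$ always produces $G_{\b'}$ with $\b'$ differing from $\b$ by changing a single $0$ to a $1$. Lemma~\ref{lemma:interchange=0to1} only establishes one direction of the correspondence: the \emph{particular} interchange it describes (pairing the consecutive edge $(i+1,i+2)$ associated to a $0$ at index $i$ with its unique nesting edge) realizes the flip $b_i \mapsto 1$. To conclude that \emph{every} interchange on a nested pair of $G_\b$ is of this form --- which is exactly what the forward direction of your order agreement requires --- you must check that the only nested pairs in $G_\b$ are those described in Lemma~\ref{lemma:0interchangeable}.

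That verification is short given what is already proven, but it should be stated. The outer edge $(a,d)$ of any nested pair satisfies $d-a\geq 3$, so it is not consecutive and must be a type-(iii) edge $(j_l+1,\ j_{l+1}+2)$. Two type-(iii) edges cannot nest: if $(j_l+1,\ j_{l+1}+2)$ nested $(j_m+1,\ j_{m+1}+2)$ we would need both $j_l<j_m$ and $j_{m+1}<j_{l+1}$, contradicting the increasing sequence $j_1<\cdots<j_r$. So the inner edge is consecutive, $(b,b+1)$, and the nesting condition gives $j_l+2\leq b\leq j_{l+1}$. Since there is no element of $J_\b$ strictly between $j_l$ and $j_{l+1}$, it follows that $b-1\notin J_\b$, i.e.\ $b_{b-1}=0$, so this nested pair is exactly the one from Lemma~\ref{lemma:0interchangeable} at index $i=b-1$. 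Once this is supplied, your bookkeeping --- translating interchange sequences to coordinate flips via $\lambda$ and inheriting reflexivity, transitivity, and antisymmetry from the Boolean order --- is complete and matches the paper's intended argument.
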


 \begin{example} \label{ex:n=5HasseInterchange}
     See Figure~\ref{fig:n=5HasseInterchange} for the Hasse diagram demonstrating poset equivalence between $\mathscr{F}_{5,3}$ and $(\Z/2\Z)^{3}$. 
     Edge interchange pairs are listed next to the edges the correspond with (assume moving upwards through the Hasse diagram). Note that all $2^3$ DAGs in $\mathscr{F}_{5,3}$ are achieved via interchanges of $G_{000}$.
 \end{example}

 \begin{figure}
     \begin{center}
     \begin{tikzpicture}
   
     \begin{scope}[xshift=0, yshift=305, scale=0.7]
   
	\vertex[fill](a1) at (1,0) {};
	\vertex[fill](a2) at (2,0) {};
	\vertex[fill](a3) at (3,0) {};
	\vertex[fill](a4) at (4,0) {};
	\vertex[fill](a5) at (5,0) {};
	\vertex[fill](a6) at (6,0) {};
	\draw[-stealth, thick] (a1) to[out=0,in=180] (a2);
	 \draw[-stealth, thick] (a1) to[out=60,in=120] (a2);
	 \draw[-stealth, thick] (a1) to[out=60,in=120] (a3);
	 \draw[-stealth, thick] (a2) to[out=0,in=180] (a3);
	\draw[-stealth, thick] (a2) to[out=60,in=120] (a4);
	 \draw[-stealth, thick] (a3) to[out=0,in=180] (a4);
	\draw[-stealth, thick] (a3) to[out=60,in=120] (a5);
	 \draw[-stealth, thick] (a4) to[out=0,in=180] (a5);
	\draw[-stealth, thick] (a4) to[out=60,in=120] (a6);
	 \draw[-stealth, thick] (a5) to[out=0,in=180] (a6);
	\draw[-stealth, thick] (a5) to[out=60,in=120] (a6);
	 \node[] at (2.5,-0.5) {\textcolor{orange}{$1$}};
	 \node[] at (3.5,-0.5) {\textcolor{orange}{$1$}};
	 \node[] at (4.5,-0.5) {\textcolor{orange}{$1$}};
   
     \end{scope}
   
     \begin{scope}[xshift=-129.9, yshift=200, scale=0.7]
   
	\vertex[fill](a1) at (1,0) {};
	\vertex[fill](a2) at (2,0) {};
	\vertex[fill](a3) at (3,0) {};
	\vertex[fill](a4) at (4,0) {};
	\vertex[fill](a5) at (5,0) {};
	\vertex[fill](a6) at (6,0) {};
	\draw[-stealth, thick] (a1) to[out=0,in=180] (a2);
	 \draw[-stealth, thick] (a1) to[out=60,in=120] (a2);
	 \draw[-stealth, thick] (a1) to[out=60,in=120] (a4);
	 \draw[-stealth, thick] (a2) to[out=0,in=180] (a3);
	\draw[-stealth, thick] (a2) to[out=60,in=120] (a3);
	 \draw[-stealth, thick] (a3) to[out=0,in=180] (a4);
	\draw[-stealth, thick] (a3) to[out=60,in=120] (a5);
	 \draw[-stealth, thick] (a4) to[out=0,in=180] (a5);
	\draw[-stealth, thick] (a4) to[out=60,in=120] (a6);
	 \draw[-stealth, thick] (a5) to[out=0,in=180] (a6);
	\draw[-stealth, thick] (a5) to[out=60,in=120] (a6);
	 \node[] at (2.5,-0.5) {\textcolor{orange}{$0$}};
	 \node[] at (3.5,-0.5) {\textcolor{orange}{$1$}};
	 \node[] at (4.5,-0.5) {\textcolor{orange}{$1$}};
   
     \end{scope}
   
     \begin{scope}[xshift=0, yshift=200, scale=0.7]
   
	\vertex[fill](a1) at (1,0) {};
	\vertex[fill](a2) at (2,0) {};
	\vertex[fill](a3) at (3,0) {};
	\vertex[fill](a4) at (4,0) {};
	\vertex[fill](a5) at (5,0) {};
	\vertex[fill](a6) at (6,0) {};
	\draw[-stealth, thick] (a1) to[out=0,in=180] (a2);
	 \draw[-stealth, thick] (a1) to[out=60,in=120] (a2);
	 \draw[-stealth, thick] (a1) to[out=60,in=120] (a3);
	 \draw[-stealth, thick] (a2) to[out=0,in=180] (a3);
	\draw[-stealth, thick] (a2) to[out=60,in=120] (a5);
	 \draw[-stealth, thick] (a3) to[out=0,in=180] (a4);
	\draw[-stealth, thick] (a3) to[out=60,in=120] (a4);
	 \draw[-stealth, thick] (a4) to[out=0,in=180] (a5);
	\draw[-stealth, thick] (a4) to[out=60,in=120] (a6);
	 \draw[-stealth, thick] (a5) to[out=0,in=180] (a6);
	\draw[-stealth, thick] (a5) to[out=60,in=120] (a6);
	 \node[] at (2.5,-0.5) {\textcolor{orange}{$1$}};
	 \node[] at (3.5,-0.5) {\textcolor{orange}{$0$}};
	 \node[] at (4.5,-0.5) {\textcolor{orange}{$1$}};
   
     \end{scope}
   
     \begin{scope}[xshift=129.9, yshift=200, scale=0.7]
   
	\vertex[fill](a1) at (1,0) {};
	\vertex[fill](a2) at (2,0) {};
	\vertex[fill](a3) at (3,0) {};
	\vertex[fill](a4) at (4,0) {};
	\vertex[fill](a5) at (5,0) {};
	\vertex[fill](a6) at (6,0) {};
	\draw[-stealth, thick] (a1) to[out=0,in=180] (a2);
	 \draw[-stealth, thick] (a1) to[out=60,in=120] (a2);
	 \draw[-stealth, thick] (a1) to[out=60,in=120] (a3);
	 \draw[-stealth, thick] (a2) to[out=0,in=180] (a3);
	\draw[-stealth, thick] (a2) to[out=60,in=120] (a4);
	 \draw[-stealth, thick] (a3) to[out=0,in=180] (a4);
	\draw[-stealth, thick] (a3) to[out=60,in=120] (a6);
	 \draw[-stealth, thick] (a4) to[out=0,in=180] (a5);
	\draw[-stealth, thick] (a4) to[out=60,in=120] (a5);
	 \draw[-stealth, thick] (a5) to[out=0,in=180] (a6);
	\draw[-stealth, thick] (a5) to[out=60,in=120] (a6);
	 \node[] at (2.5,-0.5) {\textcolor{orange}{$1$}};
	 \node[] at (3.5,-0.5) {\textcolor{orange}{$1$}};
	 \node[] at (4.5,-0.5) {\textcolor{orange}{$0$}};
   
     \end{scope}
   
     \begin{scope}[xshift=-129.9, yshift=75, scale=0.7]
   
	\vertex[fill](a1) at (1,0) {};
	\vertex[fill](a2) at (2,0) {};
	\vertex[fill](a3) at (3,0) {};
	\vertex[fill](a4) at (4,0) {};
	\vertex[fill](a5) at (5,0) {};
	\vertex[fill](a6) at (6,0) {};
	\draw[-stealth, thick] (a1) to[out=0,in=180] (a2);
	 \draw[-stealth, thick] (a1) to[out=60,in=120] (a2);
	 \draw[-stealth, thick] (a1) to[out=60,in=120] (a5);
	 \draw[-stealth, thick] (a2) to[out=0,in=180] (a3);
	\draw[-stealth, thick] (a2) to[out=60,in=120] (a3);
	 \draw[-stealth, thick] (a3) to[out=0,in=180] (a4);
	\draw[-stealth, thick] (a3) to[out=60,in=120] (a4);
	 \draw[-stealth, thick] (a4) to[out=0,in=180] (a5);
	\draw[-stealth, thick] (a4) to[out=60,in=120] (a6);
	 \draw[-stealth, thick] (a5) to[out=0,in=180] (a6);
	\draw[-stealth, thick] (a5) to[out=60,in=120] (a6);
	 \node[] at (2.5,-0.5) {\textcolor{orange}{$0$}};
	 \node[] at (3.5,-0.5) {\textcolor{orange}{$0$}};
	 \node[] at (4.5,-0.5) {\textcolor{orange}{$1$}};
   
     \end{scope}
   
     \begin{scope}[xshift=0, yshift=75, scale=0.7]
   
	\vertex[fill](a1) at (1,0) {};
	\vertex[fill](a2) at (2,0) {};
	\vertex[fill](a3) at (3,0) {};
	\vertex[fill](a4) at (4,0) {};
	\vertex[fill](a5) at (5,0) {};
	\vertex[fill](a6) at (6,0) {};
	\draw[-stealth, thick] (a1) to[out=0,in=180] (a2);
	 \draw[-stealth, thick] (a1) to[out=60,in=120] (a2);
	 \draw[-stealth, thick] (a1) to[out=60,in=120] (a4);
	 \draw[-stealth, thick] (a2) to[out=0,in=180] (a3);
	\draw[-stealth, thick] (a2) to[out=60,in=120] (a3);
	 \draw[-stealth, thick] (a3) to[out=0,in=180] (a4);
	\draw[-stealth, thick] (a3) to[out=60,in=120] (a6);
	 \draw[-stealth, thick] (a4) to[out=0,in=180] (a5);
	\draw[-stealth, thick] (a4) to[out=60,in=120] (a5);
	 \draw[-stealth, thick] (a5) to[out=0,in=180] (a6);
	\draw[-stealth, thick] (a5) to[out=60,in=120] (a6);
	 \node[] at (2.5,-0.5) {\textcolor{orange}{$0$}};
	 \node[] at (3.5,-0.5) {\textcolor{orange}{$1$}};
	 \node[] at (4.5,-0.5) {\textcolor{orange}{$0$}};
   
     \end{scope}
   
     \begin{scope}[xshift=129.9, yshift=75, scale=0.7]
   
	\vertex[fill](a1) at (1,0) {};
	\vertex[fill](a2) at (2,0) {};
	\vertex[fill](a3) at (3,0) {};
	\vertex[fill](a4) at (4,0) {};
	\vertex[fill](a5) at (5,0) {};
	\vertex[fill](a6) at (6,0) {};
	\draw[-stealth, thick] (a1) to[out=0,in=180] (a2);
	 \draw[-stealth, thick] (a1) to[out=60,in=120] (a2);
	 \draw[-stealth, thick] (a1) to[out=60,in=120] (a3);
	 \draw[-stealth, thick] (a2) to[out=0,in=180] (a3);
	\draw[-stealth, thick] (a2) to[out=60,in=120] (a6);
	 \draw[-stealth, thick] (a3) to[out=0,in=180] (a4);
	\draw[-stealth, thick] (a3) to[out=60,in=120] (a4);
	 \draw[-stealth, thick] (a4) to[out=0,in=180] (a5);
	\draw[-stealth, thick] (a4) to[out=60,in=120] (a5);
	 \draw[-stealth, thick] (a5) to[out=0,in=180] (a6);
	\draw[-stealth, thick] (a5) to[out=60,in=120] (a6);
	 \node[] at (2.5,-0.5) {\textcolor{orange}{$1$}};
	 \node[] at (3.5,-0.5) {\textcolor{orange}{$0$}};
	 \node[] at (4.5,-0.5) {\textcolor{orange}{$0$}};
   
     \end{scope}
   
     \begin{scope}[xshift=0, yshift=-30, scale=0.7]
   
	\vertex[fill](a1) at (1,0) {};
	\vertex[fill](a2) at (2,0) {};
	\vertex[fill](a3) at (3,0) {};
	\vertex[fill](a4) at (4,0) {};
	\vertex[fill](a5) at (5,0) {};
	\vertex[fill](a6) at (6,0) {};
	\draw[-stealth, thick] (a1) to[out=0,in=180] (a2);
	 \draw[-stealth, thick] (a1) to[out=60,in=120] (a2);
	 \draw[-stealth, thick] (a1) to[out=60,in=120] (a6);
	 \draw[-stealth, thick] (a2) to[out=0,in=180] (a3);
	\draw[-stealth, thick] (a2) to[out=60,in=120] (a3);
	 \draw[-stealth, thick] (a3) to[out=0,in=180] (a4);
	\draw[-stealth, thick] (a3) to[out=60,in=120] (a4);
	 \draw[-stealth, thick] (a4) to[out=0,in=180] (a5);
	\draw[-stealth, thick] (a4) to[out=60,in=120] (a5);
	 \draw[-stealth, thick] (a5) to[out=0,in=180] (a6);
	\draw[-stealth, thick] (a5) to[out=60,in=120] (a6);
	 \node[] at (2.5,-0.5) {\textcolor{orange}{$0$}};
	 \node[] at (3.5,-0.5) {\textcolor{orange}{$0$}};
	 \node[] at (4.5,-0.5) {\textcolor{orange}{$0$}};
   
     \end{scope}
   
     \draw[thick] (2.5,0) to (-2,2);
     \draw[thick] (2.5,0) to (2.5,2);
     \draw[thick] (2.5,0) to (7,2);
   
     \draw[thick] (2.5,10) to (-2,8);
     \draw[thick] (2.5,10) to (2.5,8);
     \draw[thick] (2.5,10) to (7,8);
   
     \draw[thick] (-2,4) to (-2,6);
     \draw[thick] (-2,4) to (2.5,6);
   
     \draw[thick] (7,4) to (2.5,6);
     \draw[thick] (7,4) to (7,6);
   
     \draw[thick] (2.5,4) to (-2,6);
     \draw[thick] (2.5,4) to (7,6);
   
     \node[] at (0.6,1.5) {$(1,6),(4,5)$};
     \node[] at (3.6,1.5) {$(1,6),(3,4)$};
     \node[] at (7.6,1.5) {$(1,6),(2,3)$};
   
     \node[] at (0.6,8.5) {$(1,4),(2,3)$};
     \node[] at (3.6,8.5) {$(2,5),(3,4)$};
     \node[] at (7.6,8.5) {$(3,6),(4,5)$};
   
     \node[] at (-3.1,5) {$(1,5),(3,4)$};
     \node[] at (4.5,5.8) {$(2,6),(4,5)$};
   
     \node[] at (0.5,5.8) {$(1,5),(2,3)$};
     \node[] at (4.5,4.2) {$(1,4),(2,3)$};
   
     \node[] at (0.5,4.2) {$(3,6),(4,5)$};
     \node[] at (8.1,5) {$(2,6),(3,4)$};
   
     \end{tikzpicture}
     \end{center}
     \caption{$\mathscr{F}_{5,3}$ Hasse diagram with edges labelled by corresponding interchanges. DAG vertex labels suppressed for clarity, but each DAG is labelled $1$ to $6$ from left to right.}
     \label{fig:n=5HasseInterchange}
     \end{figure}
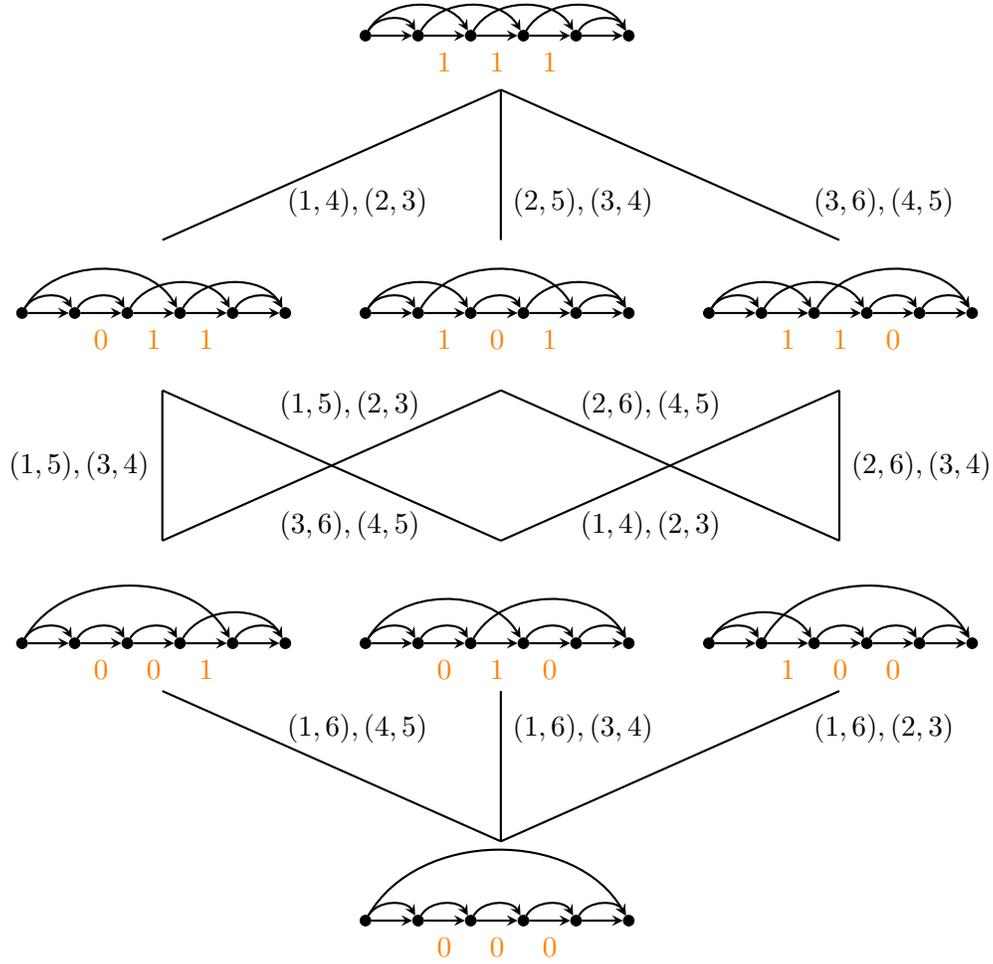

\section{Volume Inequalities} \label{sec:volume}

Our goal in this section is to prove Theorem~\ref{thm:MainThm}, which states that the values of the volumes of flow polytopes for DAGs under the Boolean partial order obtained in Corollary~\ref{cor:3coverage} respects the dual partial order.
To illustrate this, consider Example~\ref{ex:n=5HasseVolume}, which demonstrates that the function $\vol:\mathscr{F}_{n,3}\ra \Z_{\geq0}$ is order reversing with respect to our interchange order, as we prove in Theorem~\ref{thm:MainThm}. 
To prove that this is true in general, we construct a procedure to find the value of $K_G(\w_n)$, and hence find the volume of $\F_1(G)$ (Proposition~\ref{prop:VolumeSpecialization}). 
Further, this procedure can be used to compute $K_G(\a)$ for any netflow vector $\a$, and any DAG $G$ (not necessarily full).

\begin{example} \label{ex:n=5HasseVolume}
    In Figure~\ref{fig:n=5HasseVolume}, the DAGs in $\mathscr{F}_{5,3}$ are arranged as in Figure~\ref{fig:n=5HasseInterchange}. 
    Additionally, labels are given for each DAG $G$ that provide the volume of its flow polytope $\F_1(G)$.
  \end{example}

\begin{figure}
    \begin{center}
    \begin{tikzpicture}
    
    \begin{scope}[xshift=0, yshift=305, scale=0.7]
    
    	\vertex[fill](a1) at (1,0) {};
    	\vertex[fill](a2) at (2,0) {};
    	\vertex[fill](a3) at (3,0) {};
    	\vertex[fill](a4) at (4,0) {};
    	\vertex[fill](a5) at (5,0) {};
    	\vertex[fill](a6) at (6,0) {};
    	\draw[-stealth, thick] (a1) to[out=0,in=180] (a2);
        \draw[-stealth, thick] (a1) to[out=60,in=120] (a2);
        \draw[-stealth, thick] (a1) to[out=60,in=120] (a3);
        \draw[-stealth, thick] (a2) to[out=0,in=180] (a3);
    	\draw[-stealth, thick] (a2) to[out=60,in=120] (a4);
        \draw[-stealth, thick] (a3) to[out=0,in=180] (a4);
    	\draw[-stealth, thick] (a3) to[out=60,in=120] (a5);
        \draw[-stealth, thick] (a4) to[out=0,in=180] (a5);
    	\draw[-stealth, thick] (a4) to[out=60,in=120] (a6);
        \draw[-stealth, thick] (a5) to[out=0,in=180] (a6);
    	\draw[-stealth, thick] (a5) to[out=60,in=120] (a6);
        \node[] at (2.5,-0.5) {\textcolor{orange}{$1$}};
        \node[] at (3.5,-0.5) {\textcolor{orange}{$1$}};
        \node[] at (4.5,-0.5) {\textcolor{orange}{$1$}};
    
    \end{scope}
    
    \begin{scope}[xshift=-129.9, yshift=200, scale=0.7]
    
    	\vertex[fill](a1) at (1,0) {};
    	\vertex[fill](a2) at (2,0) {};
    	\vertex[fill](a3) at (3,0) {};
    	\vertex[fill](a4) at (4,0) {};
    	\vertex[fill](a5) at (5,0) {};
    	\vertex[fill](a6) at (6,0) {};
    	\draw[-stealth, thick] (a1) to[out=0,in=180] (a2);
        \draw[-stealth, thick] (a1) to[out=60,in=120] (a2);
        \draw[-stealth, thick] (a1) to[out=60,in=120] (a4);
        \draw[-stealth, thick] (a2) to[out=0,in=180] (a3);
    	\draw[-stealth, thick] (a2) to[out=60,in=120] (a3);
        \draw[-stealth, thick] (a3) to[out=0,in=180] (a4);
    	\draw[-stealth, thick] (a3) to[out=60,in=120] (a5);
        \draw[-stealth, thick] (a4) to[out=0,in=180] (a5);
    	\draw[-stealth, thick] (a4) to[out=60,in=120] (a6);
        \draw[-stealth, thick] (a5) to[out=0,in=180] (a6);
    	\draw[-stealth, thick] (a5) to[out=60,in=120] (a6);
        \node[] at (2.5,-0.5) {\textcolor{orange}{$0$}};
        \node[] at (3.5,-0.5) {\textcolor{orange}{$1$}};
        \node[] at (4.5,-0.5) {\textcolor{orange}{$1$}};
    
    \end{scope}
    
    \begin{scope}[xshift=0, yshift=200, scale=0.7]
    
    	\vertex[fill](a1) at (1,0) {};
    	\vertex[fill](a2) at (2,0) {};
    	\vertex[fill](a3) at (3,0) {};
    	\vertex[fill](a4) at (4,0) {};
    	\vertex[fill](a5) at (5,0) {};
    	\vertex[fill](a6) at (6,0) {};
    	\draw[-stealth, thick] (a1) to[out=0,in=180] (a2);
        \draw[-stealth, thick] (a1) to[out=60,in=120] (a2);
        \draw[-stealth, thick] (a1) to[out=60,in=120] (a3);
        \draw[-stealth, thick] (a2) to[out=0,in=180] (a3);
    	\draw[-stealth, thick] (a2) to[out=60,in=120] (a5);
        \draw[-stealth, thick] (a3) to[out=0,in=180] (a4);
    	\draw[-stealth, thick] (a3) to[out=60,in=120] (a4);
        \draw[-stealth, thick] (a4) to[out=0,in=180] (a5);
    	\draw[-stealth, thick] (a4) to[out=60,in=120] (a6);
        \draw[-stealth, thick] (a5) to[out=0,in=180] (a6);
    	\draw[-stealth, thick] (a5) to[out=60,in=120] (a6);
        \node[] at (2.5,-0.5) {\textcolor{orange}{$1$}};
        \node[] at (3.5,-0.5) {\textcolor{orange}{$0$}};
        \node[] at (4.5,-0.5) {\textcolor{orange}{$1$}};
    
    \end{scope}
    
    \begin{scope}[xshift=129.9, yshift=200, scale=0.7]
    
    	\vertex[fill](a1) at (1,0) {};
    	\vertex[fill](a2) at (2,0) {};
    	\vertex[fill](a3) at (3,0) {};
    	\vertex[fill](a4) at (4,0) {};
    	\vertex[fill](a5) at (5,0) {};
    	\vertex[fill](a6) at (6,0) {};
    	\draw[-stealth, thick] (a1) to[out=0,in=180] (a2);
        \draw[-stealth, thick] (a1) to[out=60,in=120] (a2);
        \draw[-stealth, thick] (a1) to[out=60,in=120] (a3);
        \draw[-stealth, thick] (a2) to[out=0,in=180] (a3);
    	\draw[-stealth, thick] (a2) to[out=60,in=120] (a4);
        \draw[-stealth, thick] (a3) to[out=0,in=180] (a4);
    	\draw[-stealth, thick] (a3) to[out=60,in=120] (a6);
        \draw[-stealth, thick] (a4) to[out=0,in=180] (a5);
    	\draw[-stealth, thick] (a4) to[out=60,in=120] (a5);
        \draw[-stealth, thick] (a5) to[out=0,in=180] (a6);
    	\draw[-stealth, thick] (a5) to[out=60,in=120] (a6);
        \node[] at (2.5,-0.5) {\textcolor{orange}{$1$}};
        \node[] at (3.5,-0.5) {\textcolor{orange}{$1$}};
        \node[] at (4.5,-0.5) {\textcolor{orange}{$0$}};
    
    \end{scope}
    
    \begin{scope}[xshift=-129.9, yshift=75, scale=0.7]
    
    	\vertex[fill](a1) at (1,0) {};
    	\vertex[fill](a2) at (2,0) {};
    	\vertex[fill](a3) at (3,0) {};
    	\vertex[fill](a4) at (4,0) {};
    	\vertex[fill](a5) at (5,0) {};
    	\vertex[fill](a6) at (6,0) {};
    	\draw[-stealth, thick] (a1) to[out=0,in=180] (a2);
        \draw[-stealth, thick] (a1) to[out=60,in=120] (a2);
        \draw[-stealth, thick] (a1) to[out=60,in=120] (a5);
        \draw[-stealth, thick] (a2) to[out=0,in=180] (a3);
    	\draw[-stealth, thick] (a2) to[out=60,in=120] (a3);
        \draw[-stealth, thick] (a3) to[out=0,in=180] (a4);
    	\draw[-stealth, thick] (a3) to[out=60,in=120] (a4);
        \draw[-stealth, thick] (a4) to[out=0,in=180] (a5);
    	\draw[-stealth, thick] (a4) to[out=60,in=120] (a6);
        \draw[-stealth, thick] (a5) to[out=0,in=180] (a6);
    	\draw[-stealth, thick] (a5) to[out=60,in=120] (a6);
        \node[] at (2.5,-0.5) {\textcolor{orange}{$0$}};
        \node[] at (3.5,-0.5) {\textcolor{orange}{$0$}};
        \node[] at (4.5,-0.5) {\textcolor{orange}{$1$}};
    
    \end{scope}
    
    \begin{scope}[xshift=0, yshift=75, scale=0.7]
    
    	\vertex[fill](a1) at (1,0) {};
    	\vertex[fill](a2) at (2,0) {};
    	\vertex[fill](a3) at (3,0) {};
    	\vertex[fill](a4) at (4,0) {};
    	\vertex[fill](a5) at (5,0) {};
    	\vertex[fill](a6) at (6,0) {};
    	\draw[-stealth, thick] (a1) to[out=0,in=180] (a2);
        \draw[-stealth, thick] (a1) to[out=60,in=120] (a2);
        \draw[-stealth, thick] (a1) to[out=60,in=120] (a4);
        \draw[-stealth, thick] (a2) to[out=0,in=180] (a3);
    	\draw[-stealth, thick] (a2) to[out=60,in=120] (a3);
        \draw[-stealth, thick] (a3) to[out=0,in=180] (a4);
    	\draw[-stealth, thick] (a3) to[out=60,in=120] (a6);
        \draw[-stealth, thick] (a4) to[out=0,in=180] (a5);
    	\draw[-stealth, thick] (a4) to[out=60,in=120] (a5);
        \draw[-stealth, thick] (a5) to[out=0,in=180] (a6);
    	\draw[-stealth, thick] (a5) to[out=60,in=120] (a6);
        \node[] at (2.5,-0.5) {\textcolor{orange}{$0$}};
        \node[] at (3.5,-0.5) {\textcolor{orange}{$1$}};
        \node[] at (4.5,-0.5) {\textcolor{orange}{$0$}};
    
    \end{scope}
    
    \begin{scope}[xshift=129.9, yshift=75, scale=0.7]
    
    	\vertex[fill](a1) at (1,0) {};
    	\vertex[fill](a2) at (2,0) {};
    	\vertex[fill](a3) at (3,0) {};
    	\vertex[fill](a4) at (4,0) {};
    	\vertex[fill](a5) at (5,0) {};
    	\vertex[fill](a6) at (6,0) {};
    	\draw[-stealth, thick] (a1) to[out=0,in=180] (a2);
        \draw[-stealth, thick] (a1) to[out=60,in=120] (a2);
        \draw[-stealth, thick] (a1) to[out=60,in=120] (a3);
        \draw[-stealth, thick] (a2) to[out=0,in=180] (a3);
    	\draw[-stealth, thick] (a2) to[out=60,in=120] (a6);
        \draw[-stealth, thick] (a3) to[out=0,in=180] (a4);
    	\draw[-stealth, thick] (a3) to[out=60,in=120] (a4);
        \draw[-stealth, thick] (a4) to[out=0,in=180] (a5);
    	\draw[-stealth, thick] (a4) to[out=60,in=120] (a5);
        \draw[-stealth, thick] (a5) to[out=0,in=180] (a6);
    	\draw[-stealth, thick] (a5) to[out=60,in=120] (a6);
        \node[] at (2.5,-0.5) {\textcolor{orange}{$1$}};
        \node[] at (3.5,-0.5) {\textcolor{orange}{$0$}};
        \node[] at (4.5,-0.5) {\textcolor{orange}{$0$}};
    
    \end{scope}
    
    \begin{scope}[xshift=0, yshift=-30, scale=0.7]
    
    	\vertex[fill](a1) at (1,0) {};
    	\vertex[fill](a2) at (2,0) {};
    	\vertex[fill](a3) at (3,0) {};
    	\vertex[fill](a4) at (4,0) {};
    	\vertex[fill](a5) at (5,0) {};
    	\vertex[fill](a6) at (6,0) {};
    	\draw[-stealth, thick] (a1) to[out=0,in=180] (a2);
        \draw[-stealth, thick] (a1) to[out=60,in=120] (a2);
        \draw[-stealth, thick] (a1) to[out=60,in=120] (a6);
        \draw[-stealth, thick] (a2) to[out=0,in=180] (a3);
    	\draw[-stealth, thick] (a2) to[out=60,in=120] (a3);
        \draw[-stealth, thick] (a3) to[out=0,in=180] (a4);
    	\draw[-stealth, thick] (a3) to[out=60,in=120] (a4);
        \draw[-stealth, thick] (a4) to[out=0,in=180] (a5);
    	\draw[-stealth, thick] (a4) to[out=60,in=120] (a5);
        \draw[-stealth, thick] (a5) to[out=0,in=180] (a6);
    	\draw[-stealth, thick] (a5) to[out=60,in=120] (a6);
        \node[] at (2.5,-0.5) {\textcolor{orange}{$0$}};
        \node[] at (3.5,-0.5) {\textcolor{orange}{$0$}};
        \node[] at (4.5,-0.5) {\textcolor{orange}{$0$}};
    
    \end{scope}
    
    \draw[thick] (2.5,0) to (-2,2);
    \draw[thick] (2.5,0) to (2.5,2);
    \draw[thick] (2.5,0) to (7,2);
    
    \draw[thick] (2.5,10) to (-2,8);
    \draw[thick] (2.5,10) to (2.5,8);
    \draw[thick] (2.5,10) to (7,8);
    
    \draw[thick] (-2,4) to (-2,6);
    \draw[thick] (-2,4) to (2.5,6);
    
    \draw[thick] (7,4) to (2.5,6);
    \draw[thick] (7,4) to (7,6);
    
    \draw[thick] (2.5,4) to (-2,6);
    \draw[thick] (2.5,4) to (7,6);
    
    \node[] at (4.7,-1) {\textcolor{blue}{$120$}};
    
    \node[] at (-4.4,2.7) {\textcolor{blue}{$84$}};
    \node[] at (4.6,2.7) {\textcolor{blue}{$76$}};
    \node[] at (9.4,2.7) {\textcolor{blue}{$84$}};
    
    \node[] at (-4.4,7.1) {\textcolor{blue}{$70$}};
    \node[] at (4.6,7.1) {\textcolor{blue}{$66$}};
    \node[] at (9.4,7.1) {\textcolor{blue}{$70$}};
    
    \node[] at (4.7,10.8) {\textcolor{blue}{$61$}};
    
    \end{tikzpicture}
    \end{center}
    \caption{$\mathscr{F}_{5,3}$ Hasse diagram with associated flow polytope volumes.}
    \label{fig:n=5HasseVolume}
    \end{figure}
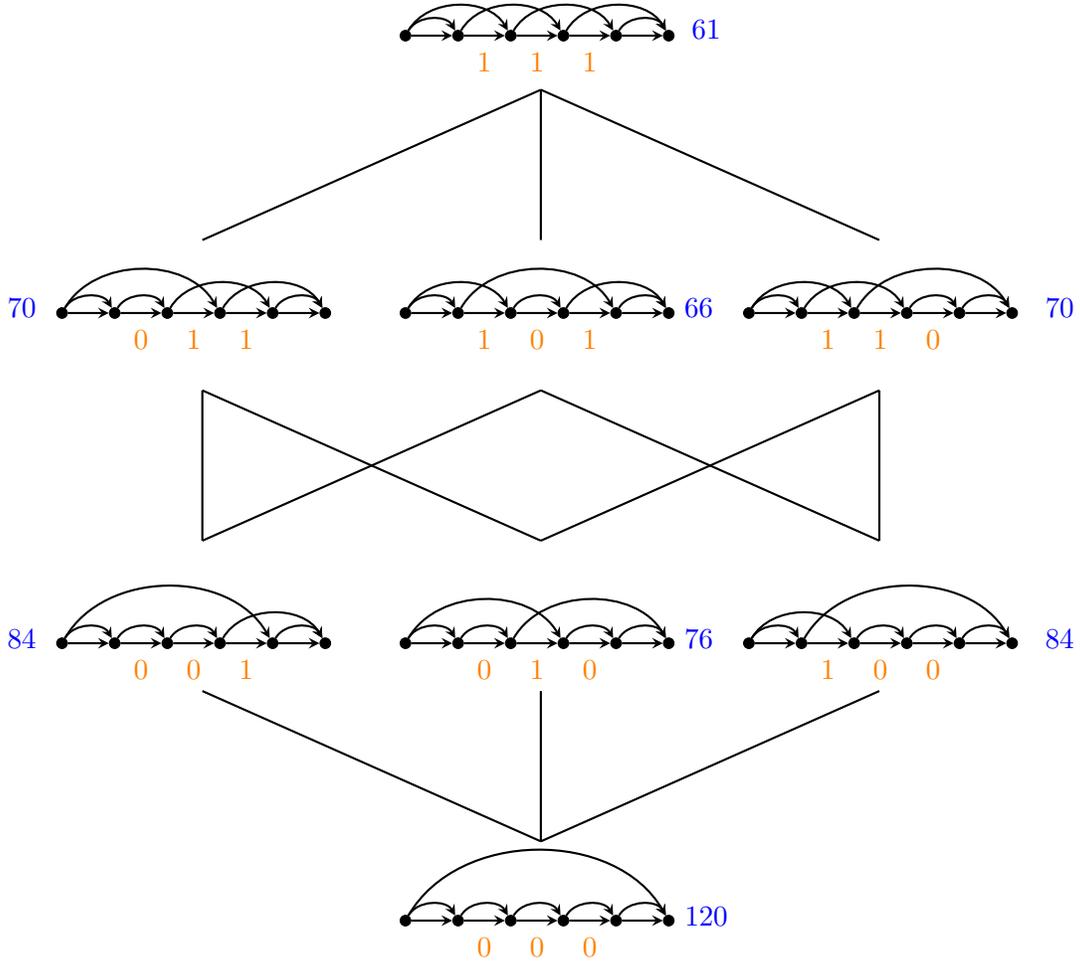

We begin with some notation and definitions regarding partially-determined flow values for a DAG.

\begin{definition}
    Let $G$ be a DAG on vertex set $[n+1]$ with netflow vector $\a=(a_1,\ldots,a_n,-\sum_i a_i)$. 
    Let $L\subseteq E(G)$ and $M\subseteq [n+1]$. 
    Define a \textit{partial flow $f_{G,L}:L\ra \R_{\geq0}$} to be an assignment of flow values to the subset of edges $L$. 
    We say a \emph{partial flow} $f_{G,L}$ is \textit{valid on $M$} if the assigned flow values respect the netflow values $a_i$ for all $i\in M$. 
\end{definition}

    For convenience, we introduce the following notation. For $G$ a DAG with vertices $[n+1]$, we set  
    \[ 
    L_k:=\{(i,j)\in E(G) \ |\ i\leq k\}. 
    \]

\begin{definition}
    Let $G$ be a DAG with vertices $[n+1]$ and $\a=(a_1,\ldots,a_n,-\sum_i a_i)$ be a netflow vector. 
    We define a tree $T_G(\a)$, called the \emph{flow decomposition tree}, 
    as follows:
    \begin{itemize}
        \item The nodes of $T_G(\a)$ are of the form $(G,i,f_{G,L_i})$, i.e., they consist of the DAG $G$ with a partial flow $f_{G,L_i}$ valid on $[i]$, for each $0\leq i\leq n$. 
        Note that when $i=0$, we consider the node $(G,0,f_{G,L_0})$ to be $G$ with no flow values assigned.
        \item The edges of $T_G(\a)$ are between $(G,i,f_{G,L_i})$ and $(G,i+1,g_{G,L_{i+1}})$ whenever
        \[ 
        f_{G,L_i} = g_{G,L_{i+1}}|_{L_i}\, . 
        \] 
    \end{itemize}
    
\end{definition}

We will use the word ``node'' when referring to flow decomposition trees and the word ``vertex'' when referring to DAGs to help distinguish the different objects. 
Note that the data of a node $(G,i,f_{G,L_i})$ in $T_G(\a)$ includes the DAG $G$, a vertex $i$, and a partial flow on $L_i$ valid for $[i]$. 
    However, we can represent the node $(G,i,f_{G,L_i})$ by just its partial flow $f_{G,L_i}$, since the DAG is already part of the partial flow's data, and we choose to make the set of vertices on which the partial flow is valid to match the subscript of $L_i$. 
    We will use this abbreviation convention unless specifying the full node-data is necessary. 

\begin{definition} \label{def:LeafCount}
    Let $f_{G,L_i}$ be a partial flow on $G$ valid on $[n]$ that determines a node in $T_G(\a)$. 
    Denote the number of branches at the node $f_{G,L_{i}}$ by $b(f_{G,L_i})$. 
    Similarly, denote the number of leaves at the node $f_{G,L_{i}}$ by $l(f_{G,L_i})$.
\end{definition}

Observe that $T_G(\a)$ is a rooted tree whose leaves are (complete) flows on $G$. 
    Thus 
    \[ 
    K_G(\a) = \# \text{ of leaves in } T_G(\a) = l(G,0,f_{G,L_0}). 
    \]     
 Further, we can generate $T_G(\a)$ inductively using the following process.
    \begin{enumerate}
        \item Begin with the node $f_{G,L_0}$, which is the root. 
        \item Compute all possible partial flows on $L_1$ valid for vertex $1$, and create a node for each. 
        These nodes form the second level of $T_G(\a)$.
        \item Continue this process until termination. 
        For a node of the form $f_{G,L_i}$, its children are precisely the nodes of the form $g_{G,L_{i+1}}$ where $g_{G,L_{i+1}}|_{L_i} = f_{G,L_i}$ (just as partial flows, not as nodes). 
        In other words, given a partial flow on $L_i$ valid on $[i]$, assign flows to edges out of vertex $i+1$ so that netflow at vertex $i+1=a_{i+1}$
        \item A node is a leaf if $i=n+1$ (flows have been assigned to all edges). 
    \end{enumerate}
    We say a node $f_{G,L_{i}}$ in $T_G(\a)$ is \emph{at (or on) level} $i+1$. We denote the set of level $i+1$ nodes in $T_G(\a)$ by $V_{i+1}(T_G(\a))$, or $V_{i+1}(G)$ if the netflow vector is understood, or just $V_{i+1}$ if both the DAG and netflow vector are clear. 
    The branches of a level $i$ node, which are level $i+1$ nodes, correspond with the cases of partial flows on $L_i$ valid on $[i]$. 
    Further, looking at all the level $i+1$ nodes of $T_G(\a)$, we see all possible partial flows on $L_i$ valid on $[i]$.
    Further, the following are equivalent:
    \begin{itemize}
        \item A partial flow with outflow determined for vertices $[i]$, respecting the netflows on $[i]$;
        \item A partial flow $f_{G,L_i}$ valid on vertices $[i]$; and
        \item A level $i+1$ node in $T_G(\a)$.
    \end{itemize}

\begin{example}
    See Figure~\ref{fig:K_4fd-tree} below for $T_{K_4}(1,1,1,-3)$. 
    Each node of the flow decomposition tree is a copy of $K_4$ (vertices labelled $1$ to $4$, but suppressed) with a partial flow. 
    In the partial flows, unassigned and flow $0$ edges are dashed, edges with flow $1$ are solid, and flows greater than $1$ are labelled. 
\end{example}

\begin{figure}
\begin{center}
\begin{tikzpicture}[scale=0.95]


    \begin{scope}[xshift=175, yshift=240, scale=0.7]
	\vertex[fill](a1) at (1,0) {};
	\vertex[fill](a2) at (2,0) {};
	\vertex[fill](a3) at (3,0) {};
	\vertex[fill](a4) at (4,0) {};
	\draw[-stealth, dashed] (a1) to[out=0,in=180] (a2);
    \draw[-stealth, dashed] (a1) to[out=60,in=120] (a3);
    \draw[-stealth, dashed] (a1) to[out=60,in=120] (a4);
    \draw[-stealth, dashed] (a2) to[out=0,in=180] (a3);
	\draw[-stealth, dashed] (a2) to[out=60,in=120] (a4);
    \draw[-stealth, dashed] (a3) to[out=0,in=180] (a4);

    \end{scope}

    \begin{scope}[xshift=35, yshift=160, scale=0.7]
	\vertex[fill](a1) at (1,0) {};
	\vertex[fill](a2) at (2,0) {};
	\vertex[fill](a3) at (3,0) {};
	\vertex[fill](a4) at (4,0) {};
	\draw[-stealth, dashed] (a1) to[out=0,in=180] (a2);
    \draw[-stealth, dashed] (a1) to[out=60,in=120] (a3);
    \draw[-stealth, thick] (a1) to[out=60,in=120] (a4);
    \draw[-stealth, dashed] (a2) to[out=0,in=180] (a3);
	\draw[-stealth, dashed] (a2) to[out=60,in=120] (a4);
    \draw[-stealth, dashed] (a3) to[out=0,in=180] (a4);

    \end{scope}

    \begin{scope}[xshift=175, yshift=160, scale=0.7]
	\vertex[fill](a1) at (1,0) {};
	\vertex[fill](a2) at (2,0) {};
	\vertex[fill](a3) at (3,0) {};
	\vertex[fill](a4) at (4,0) {};
	\draw[-stealth, dashed] (a1) to[out=0,in=180] (a2);
    \draw[-stealth, thick] (a1) to[out=60,in=120] (a3);
    \draw[-stealth, dashed] (a1) to[out=60,in=120] (a4);
    \draw[-stealth, dashed] (a2) to[out=0,in=180] (a3);
	\draw[-stealth, dashed] (a2) to[out=60,in=120] (a4);
    \draw[-stealth, dashed] (a3) to[out=0,in=180] (a4);

    \end{scope}

    \begin{scope}[xshift=350, yshift=160, scale=0.7]
	\vertex[fill](a1) at (1,0) {};
	\vertex[fill](a2) at (2,0) {};
	\vertex[fill](a3) at (3,0) {};
	\vertex[fill](a4) at (4,0) {};
	\draw[-stealth, thick] (a1) to[out=0,in=180] (a2);
    \draw[-stealth, dashed] (a1) to[out=60,in=120] (a3);
    \draw[-stealth, dashed] (a1) to[out=60,in=120] (a4);
    \draw[-stealth, dashed] (a2) to[out=0,in=180] (a3);
	\draw[-stealth, dashed] (a2) to[out=60,in=120] (a4);
    \draw[-stealth, dashed] (a3) to[out=0,in=180] (a4);

    \end{scope}

    \begin{scope}[xshift=0, yshift=80, scale=0.7]
	\vertex[fill](a1) at (1,0) {};
	\vertex[fill](a2) at (2,0) {};
	\vertex[fill](a3) at (3,0) {};
	\vertex[fill](a4) at (4,0) {};
	\draw[-stealth, dashed] (a1) to[out=0,in=180] (a2);
    \draw[-stealth, dashed] (a1) to[out=60,in=120] (a3);
    \draw[-stealth, thick] (a1) to[out=60,in=120] (a4);
    \draw[-stealth, dashed] (a2) to[out=0,in=180] (a3);
	\draw[-stealth, thick] (a2) to[out=60,in=120] (a4);
    \draw[-stealth, dashed] (a3) to[out=0,in=180] (a4);

    \end{scope}

    \begin{scope}[xshift=70, yshift=80, scale=0.7]
	\vertex[fill](a1) at (1,0) {};
	\vertex[fill](a2) at (2,0) {};
	\vertex[fill](a3) at (3,0) {};
	\vertex[fill](a4) at (4,0) {};
	\draw[-stealth, dashed] (a1) to[out=0,in=180] (a2);
    \draw[-stealth, dashed] (a1) to[out=60,in=120] (a3);
    \draw[-stealth, thick] (a1) to[out=60,in=120] (a4);
    \draw[-stealth, thick] (a2) to[out=0,in=180] (a3);
	\draw[-stealth, dashed] (a2) to[out=60,in=120] (a4);
    \draw[-stealth, dashed] (a3) to[out=0,in=180] (a4);

    \end{scope}

    \begin{scope}[xshift=140, yshift=80, scale=0.7]
	\vertex[fill](a1) at (1,0) {};
	\vertex[fill](a2) at (2,0) {};
	\vertex[fill](a3) at (3,0) {};
	\vertex[fill](a4) at (4,0) {};
	\draw[-stealth, dashed] (a1) to[out=0,in=180] (a2);
    \draw[-stealth, thick] (a1) to[out=60,in=120] (a3);
    \draw[-stealth, dashed] (a1) to[out=60,in=120] (a4);
    \draw[-stealth, dashed] (a2) to[out=0,in=180] (a3);
	\draw[-stealth, thick] (a2) to[out=60,in=120] (a4);
    \draw[-stealth, dashed] (a3) to[out=0,in=180] (a4);

    \end{scope}

    \begin{scope}[xshift=210, yshift=80, scale=0.7]
	\vertex[fill](a1) at (1,0) {};
	\vertex[fill](a2) at (2,0) {};
	\vertex[fill](a3) at (3,0) {};
	\vertex[fill](a4) at (4,0) {};
	\draw[-stealth, dashed] (a1) to[out=0,in=180] (a2);
    \draw[-stealth, thick] (a1) to[out=60,in=120] (a3);
    \draw[-stealth, dashed] (a1) to[out=60,in=120] (a4);
    \draw[-stealth, thick] (a2) to[out=0,in=180] (a3);
	\draw[-stealth, dashed] (a2) to[out=60,in=120] (a4);
    \draw[-stealth, dashed] (a3) to[out=0,in=180] (a4);

    \end{scope}

    \begin{scope}[xshift=280, yshift=80, scale=0.7]
	\vertex[fill](a1) at (1,0) {};
	\vertex[fill](a2) at (2,0) {};
	\vertex[fill](a3) at (3,0) {};
	\vertex[fill](a4) at (4,0) {};
	\draw[-stealth, thick] (a1) to[out=0,in=180] (a2);
    \draw[-stealth, dashed] (a1) to[out=60,in=120] (a3);
    \draw[-stealth, dashed] (a1) to[out=60,in=120] (a4);
    \draw[-stealth, dashed] (a2) to[out=0,in=180] (a3);
	\draw[-stealth, thick] (a2) to[out=60,in=120] (a4);
    \draw[-stealth, dashed] (a3) to[out=0,in=180] (a4);
    \node[] at (2.1,0.4) {\tiny 2};

    \end{scope}

    \begin{scope}[xshift=350, yshift=80, scale=0.7]
	\vertex[fill](a1) at (1,0) {};
	\vertex[fill](a2) at (2,0) {};
	\vertex[fill](a3) at (3,0) {};
	\vertex[fill](a4) at (4,0) {};
	\draw[-stealth, thick] (a1) to[out=0,in=180] (a2);
    \draw[-stealth, dashed] (a1) to[out=60,in=120] (a3);
    \draw[-stealth, dashed] (a1) to[out=60,in=120] (a4);
    \draw[-stealth, thick] (a2) to[out=0,in=180] (a3);
	\draw[-stealth, thick] (a2) to[out=60,in=120] (a4);
    \draw[-stealth, dashed] (a3) to[out=0,in=180] (a4);

    \end{scope}

    \begin{scope}[xshift=420, yshift=80, scale=0.7]
	\vertex[fill](a1) at (1,0) {};
	\vertex[fill](a2) at (2,0) {};
	\vertex[fill](a3) at (3,0) {};
	\vertex[fill](a4) at (4,0) {};
	\draw[-stealth, thick] (a1) to[out=0,in=180] (a2);
    \draw[-stealth, dashed] (a1) to[out=60,in=120] (a3);
    \draw[-stealth, dashed] (a1) to[out=60,in=120] (a4);
    \draw[-stealth, thick] (a2) to[out=0,in=180] (a3);
	\draw[-stealth, dashed] (a2) to[out=60,in=120] (a4);
    \draw[-stealth, dashed] (a3) to[out=0,in=180] (a4);
    \node[] at (2.4,-0.2) {\tiny 2};

    \end{scope}

    \begin{scope}[xshift=0, yshift=0, scale=0.7]
	\vertex[fill](a1) at (1,0) {};
	\vertex[fill](a2) at (2,0) {};
	\vertex[fill](a3) at (3,0) {};
	\vertex[fill](a4) at (4,0) {};
	\draw[-stealth, dashed] (a1) to[out=0,in=180] (a2);
    \draw[-stealth, dashed] (a1) to[out=60,in=120] (a3);
    \draw[-stealth, thick] (a1) to[out=60,in=120] (a4);
    \draw[-stealth, dashed] (a2) to[out=0,in=180] (a3);
	\draw[-stealth, thick] (a2) to[out=60,in=120] (a4);
    \draw[-stealth, thick] (a3) to[out=0,in=180] (a4);

    \end{scope}

    \begin{scope}[xshift=70, yshift=0, scale=0.7]
	\vertex[fill](a1) at (1,0) {};
	\vertex[fill](a2) at (2,0) {};
	\vertex[fill](a3) at (3,0) {};
	\vertex[fill](a4) at (4,0) {};
	\draw[-stealth, dashed] (a1) to[out=0,in=180] (a2);
    \draw[-stealth, dashed] (a1) to[out=60,in=120] (a3);
    \draw[-stealth, thick] (a1) to[out=60,in=120] (a4);
    \draw[-stealth, thick] (a2) to[out=0,in=180] (a3);
	\draw[-stealth, dashed] (a2) to[out=60,in=120] (a4);
    \draw[-stealth, thick] (a3) to[out=0,in=180] (a4);
    \node[] at (3.4,-0.2) {\tiny 2};

    \end{scope}

    \begin{scope}[xshift=140, yshift=0, scale=0.7]
	\vertex[fill](a1) at (1,0) {};
	\vertex[fill](a2) at (2,0) {};
	\vertex[fill](a3) at (3,0) {};
	\vertex[fill](a4) at (4,0) {};
	\draw[-stealth, dashed] (a1) to[out=0,in=180] (a2);
    \draw[-stealth, thick] (a1) to[out=60,in=120] (a3);
    \draw[-stealth, dashed] (a1) to[out=60,in=120] (a4);
    \draw[-stealth, dashed] (a2) to[out=0,in=180] (a3);
	\draw[-stealth, thick] (a2) to[out=60,in=120] (a4);
    \draw[-stealth, thick] (a3) to[out=0,in=180] (a4);
    \node[] at (3.4,-0.2) {\tiny 2};

    \end{scope}

    \begin{scope}[xshift=210, yshift=0, scale=0.7]
	\vertex[fill](a1) at (1,0) {};
	\vertex[fill](a2) at (2,0) {};
	\vertex[fill](a3) at (3,0) {};
	\vertex[fill](a4) at (4,0) {};
	\draw[-stealth, dashed] (a1) to[out=0,in=180] (a2);
    \draw[-stealth, thick] (a1) to[out=60,in=120] (a3);
    \draw[-stealth, dashed] (a1) to[out=60,in=120] (a4);
    \draw[-stealth, thick] (a2) to[out=0,in=180] (a3);
	\draw[-stealth, dashed] (a2) to[out=60,in=120] (a4);
    \draw[-stealth, thick] (a3) to[out=0,in=180] (a4);
    \node[] at (3.4,-0.2) {\tiny 3};

    \end{scope}

    \begin{scope}[xshift=280, yshift=0, scale=0.7]
	\vertex[fill](a1) at (1,0) {};
	\vertex[fill](a2) at (2,0) {};
	\vertex[fill](a3) at (3,0) {};
	\vertex[fill](a4) at (4,0) {};
	\draw[-stealth, thick] (a1) to[out=0,in=180] (a2);
    \draw[-stealth, dashed] (a1) to[out=60,in=120] (a3);
    \draw[-stealth, dashed] (a1) to[out=60,in=120] (a4);
    \draw[-stealth, dashed] (a2) to[out=0,in=180] (a3);
	\draw[-stealth, thick] (a2) to[out=60,in=120] (a4);
    \draw[-stealth, thick] (a3) to[out=0,in=180] (a4);
    \node[] at (2.1,0.4) {\tiny 2};

    \end{scope}

    \begin{scope}[xshift=350, yshift=0, scale=0.7]
	\vertex[fill](a1) at (1,0) {};
	\vertex[fill](a2) at (2,0) {};
	\vertex[fill](a3) at (3,0) {};
	\vertex[fill](a4) at (4,0) {};
	\draw[-stealth, thick] (a1) to[out=0,in=180] (a2);
    \draw[-stealth, dashed] (a1) to[out=60,in=120] (a3);
    \draw[-stealth, dashed] (a1) to[out=60,in=120] (a4);
    \draw[-stealth, thick] (a2) to[out=0,in=180] (a3);
	\draw[-stealth, thick] (a2) to[out=60,in=120] (a4);
    \draw[-stealth, thick] (a3) to[out=0,in=180] (a4);
    \node[] at (3.4,-0.2) {\tiny 2};

    \end{scope}

    \begin{scope}[xshift=420, yshift=0, scale=0.7]
	\vertex[fill](a1) at (1,0) {};
	\vertex[fill](a2) at (2,0) {};
	\vertex[fill](a3) at (3,0) {};
	\vertex[fill](a4) at (4,0) {};
	\draw[-stealth, thick] (a1) to[out=0,in=180] (a2);
    \draw[-stealth, dashed] (a1) to[out=60,in=120] (a3);
    \draw[-stealth, dashed] (a1) to[out=60,in=120] (a4);
    \draw[-stealth, thick] (a2) to[out=0,in=180] (a3);
	\draw[-stealth, dashed] (a2) to[out=60,in=120] (a4);
    \draw[-stealth, thick] (a3) to[out=0,in=180] (a4);
    \node[] at (2.4,-0.2) {\tiny 2};
    \node[] at (3.4,-0.2) {\tiny 3};

    \end{scope}


    \draw[thick] (1.75,1) to (1.75,2.4);
    \draw[thick] (4.2,1) to (4.2,2.4);
    \draw[thick] (6.65,1) to (6.65,2.4);
    \draw[thick] (9.1,1) to (9.1,2.4);
    \draw[thick] (11.55,1) to (11.55,2.4);
    \draw[thick] (14,1) to (14,2.4);
    \draw[thick] (16.45,1) to (16.45,2.4);

    \draw[thick] (1.75,3.8) to (2.975,5.2);
    \draw[thick] (4.2,3.8) to (2.975,5.2);
    \draw[thick] (6.65,3.8) to (7.875,5.2);
    \draw[thick] (9.1,3.8) to (7.875,5.2);
    \draw[thick] (11.55,3.8) to (14,5.2);
    \draw[thick] (14,3.8) to (14,5.2);
    \draw[thick] (16.45,3.8) to (14,5.2);

    \draw[thick] (2.975,6.6) to (7.875,8);
    \draw[thick] (7.875,6.6) to (7.875,8);
    \draw[thick] (14,6.6) to (7.875,8);

\end{tikzpicture}
\end{center}
\caption{The flow decomposition tree $T_{K_4}(1,1,1,-3)$.}
\label{fig:K_4fd-tree}
\end{figure}
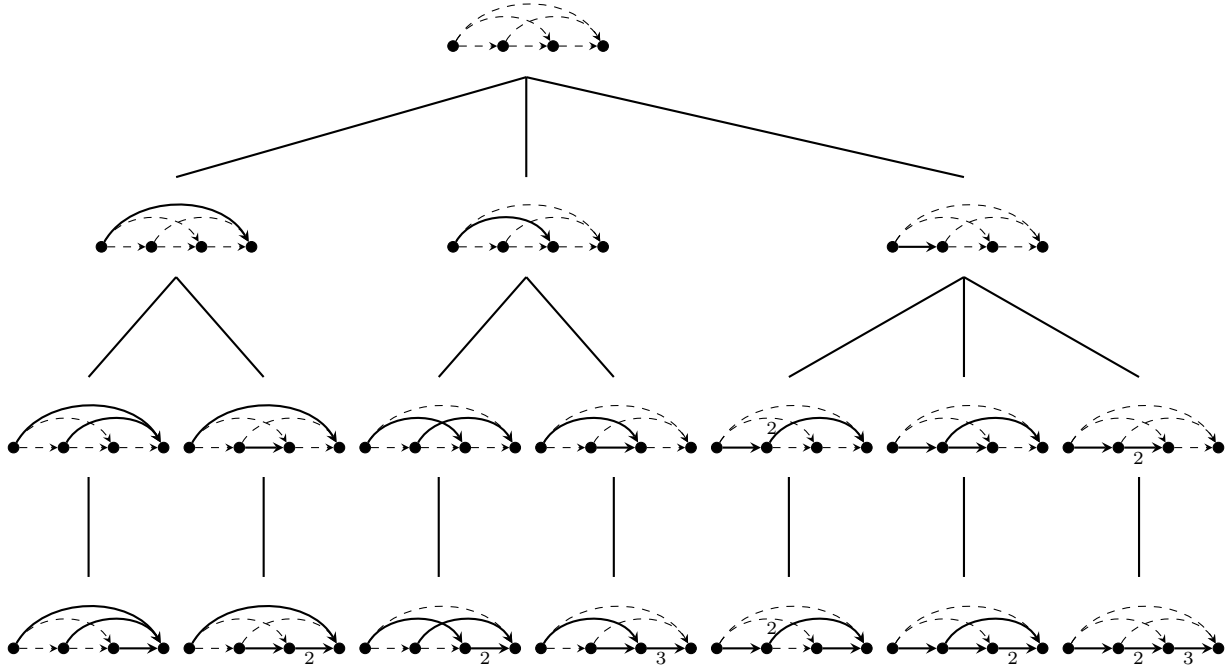

In the following proposition, we show that the number of branches at a given node in the flow decomposition tree is determined by the partial flow, netflow vector, and out-degree sequence.

\begin{proposition} \label{prop:BranchCountGeneral}
    Let $f_{G,L_{i-1}}$ be a level $i$ node in $T_G(\a)$ for some $1\leq i\leq n$. 
    Suppose the partial flow $f_{G,L_{i-1}}$ has $t$ units of inflow entering vertex $i$. 
    Then the number of branches of the node is
    \begin{align*}
       b(f_{G,L_{i-1}}) &= \binom{t+a_i+\outd_i-1}{\outd_i-1}.
    \end{align*}   
\end{proposition}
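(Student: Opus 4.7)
The plan is to observe that the branches of a level $i$ node correspond exactly to the possible extensions of the partial flow $f_{G,L_{i-1}}$ to a partial flow $g_{G,L_i}$ valid on $[i]$. Since $L_i \setminus L_{i-1}$ consists precisely of the $\outd_i$ edges leaving vertex $i$, each branch is determined by an assignment of non-negative integer flow values to these $\outd_i$ edges, subject to the single linear constraint coming from validity at vertex $i$.

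First I would unpack the validity condition. Because $M_G f_G = \a$ says that for each vertex $k$ the outflow minus the inflow equals $a_k$, validity on $[i]$ (together with validity on $[i-1]$, which $f_{G,L_{i-1}}$ already provides) is equivalent to requiring that the total outflow from vertex $i$ equals $(\text{inflow at } i) + a_i = t + a_i$. Note that the inflow $t$ is entirely determined by the partial flow $f_{G,L_{i-1}}$, since every edge $(j,i)$ entering $i$ has $j<i$ and hence lies in $L_{i-1}$, so its flow value is already assigned.

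The counting step is then a standard stars-and-bars argument: the number of extensions is the number of ways to write $t+a_i$ as an ordered sum of $\outd_i$ non-negative integers, i.e., the number of weak compositions of $t+a_i$ into $\outd_i$ parts, which equals
\[
\binom{t+a_i+\outd_i-1}{\outd_i-1}.
\]
Each such composition yields a distinct valid extension, and conversely each branch at $f_{G,L_{i-1}}$ arises in this way, giving a bijection.

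I do not anticipate any real obstacle here: the main subtlety is simply to make explicit that all edges contributing to the inflow at $i$ already lie in $L_{i-1}$ (so that $t$ is well-defined from the partial flow), and to state correctly the sign convention by which outflow equals inflow plus $a_i$. Once these two bookkeeping points are fixed, the proof reduces to a one-line application of the weak composition count.
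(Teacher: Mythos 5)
Your proposal is correct and follows essentially the same argument as the paper: extensions of the partial flow correspond to weak compositions of $t+a_i$ into $\outd_i$ parts, counted by stars and bars. The only difference is that you make explicit the (true, and worth saying) observation that every edge entering vertex $i$ already lies in $L_{i-1}$, so $t$ is determined by the node's data; the paper leaves this implicit.
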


\begin{proof}
    To extend $f_{G,L_{i-1}}$ to a $[i]$-valid partial flow on $L_{i}$, we need to respect the netflow at vertex $i$. 
Thus we need the outflow at $i$ to be $t+a_i$ units. 
This outgoing flow is distributed over the outgoing edges from $i$. 
Hence the number of branches is the number of weak compositions of $t+a_i$ into $\outd_i$ parts. 
This quantity is counted by 
$\displaystyle \binom{t+a_i+\outd_i-1}{\outd_i-1}. $
\end{proof}

For $G\in \mathscr{F}_{n,3}$, we have the following special case. 

\begin{corollary} \label{cor:BranchCountSpeial}
    Let $G\in \mathscr{F}_{n,3}$ and netflow vector $\w_n$. 
    Then the number of branches of a node $f_{G,L_{i-1}}$ will be 
    \begin{align*}
        b(f_{G,L_{i-1}}) &= \left\{
        \begin{array}{ll}
            1, & i=1 \\
            2, & i=2 \\
            t+2, &  i\geq3
        \end{array}
        \right. .
    \end{align*}
\end{corollary}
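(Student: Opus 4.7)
The plan is to obtain the corollary as a direct specialization of Proposition~\ref{prop:BranchCountGeneral}, substituting the specific netflow vector $\w_n$ and the degree sequence that defines $\mathscr{F}_{n,3}$. Proposition~\ref{prop:BranchCountGeneral} gives
\[
b(f_{G,L_{i-1}}) = \binom{t + a_i + \outd_i - 1}{\outd_i - 1},
\]
so the whole task reduces to reading off $a_i$ and $\outd_i$ in each of the three cases and simplifying the binomial coefficient.

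For $i = 1$, since vertex $1$ is the source of $G$, no edge of $G$ enters it and therefore $t = 0$ for the (unique) root node $f_{G,L_0}$. The definition of $\w_n$ gives $a_1 = 0$, and $G \in \mathscr{F}_{n,3}$ gives $\outd_1 = 3$, so the formula yields $\binom{0+0+3-1}{3-1} = \binom{2}{2} = 1$. This also confirms that the only level-$2$ node is the partial flow that assigns $0$ to all three edges leaving vertex $1$.

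For $i = 2$, the previous step forces every partial flow on $L_1$ to be identically zero, and since $\ind_2 = 2$ with both in-edges originating at vertex $1$, we must have $t = 0$ for every level-$2$ node. Using $a_2 = 1$ and $\outd_2 = 2$, the formula gives $\binom{0+1+2-1}{2-1} = \binom{2}{1} = 2$. For $i \geq 3$, the vertex $i$ is an interior vertex of the full DAG $G$, so $\outd_i = 2$, and the $i$-th coordinate of $\w_n$ is $a_i = 1$; plugging into the formula yields $\binom{t+1+2-1}{2-1} = \binom{t+2}{1} = t+2$, as claimed.

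There is no real obstacle here: the only subtle point is that in the cases $i = 1$ and $i = 2$ the value of $t$ is forced (to $0$), so the binomial coefficient becomes a concrete integer rather than an expression in $t$, while for $i \geq 3$ the value of $t$ genuinely depends on the partial flow and must remain as a parameter. This minor bookkeeping about which $t$ values actually occur is what distinguishes the three cases in the statement.
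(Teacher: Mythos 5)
Your proof is correct and takes essentially the same route as the paper: specialize Proposition~\ref{prop:BranchCountGeneral} by inserting $a_i$ and $\outd_i$ for a DAG in $\mathscr{F}_{n,3}$ with netflow $\w_n$, then simplify the binomial coefficients. The only difference is that you explicitly justify why $t=0$ in the cases $i=1$ and $i=2$ (source has no inflow; the single level-$1$ branch forces all outflow from vertex $1$ to vanish), whereas the paper implicitly substitutes $t=0$ in those binomials without comment — your added bookkeeping is a small improvement in exposition, not a different argument.
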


\begin{proof}
    By assumption $G\in \mathscr{F}_{n,3}$, 
$\outd_1=3$ and $\outd_i=2$ for interior vertices $i$. 
Further, assuming netflow vector to be $\w_n$ gives $a_1=0$ and $a_i=1$ for interior vertices $i$. 
Thus
\begin{align*}
    b(f_{G,L_{i-1}})
    &= \left\{
        \begin{array}{ll}
            \binom{0+0+3-1}{3-1}, & i=1 \\
            \binom{0+1+2-1}{2-1}, & i=2 \\
            \binom{t+1+2-1}{2-1}, &  i\geq3
        \end{array}
        \right. 
        = \left\{
        \begin{array}{ll}
            1, & i=1 \\
            2, & i=2 \\
            t+2, &  i\geq3
        \end{array}
        \right. .
\end{align*}
\end{proof}

The following lemma establishes a relationship between level $i$ nodes in flow decomposition trees for DAGs related by an edge interchange.

\begin{lemma} \label{lemma:Levelb+1Corr}
    Let $G\in\mathscr{F}_{n,3}$. 
    Suppose $G'$ is obtained by interchanging edges $(a,d)$ and $(b,b+1)$ in $G$. 
    Then for every $1\leq i\leq b+1$, the nodes of level $i$ in $T_G(\w_n)$ are in bijection with the nodes of level $i$ in $T_{G'}(\w_n)$.
    This bijection is denoted $\phi$.
\end{lemma}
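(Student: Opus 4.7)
The plan is to construct the bijection $\phi$ explicitly and then verify that it preserves validity of partial flows. The key structural observation is that after interchanging the nested pair $(a,d)$ and $(b,b+1)$ to obtain $G'$, the edge set changes precisely by removing these two edges and adding $(a,b+1)$ and $(b,d)$, where $a<b<b+1<d$. All four modified edges have their tails in $\{a,b\}$ and their heads in $\{b+1,d\}$. Hence, for any $1 \leq i \leq b+1$, membership of a modified edge in $L_{i-1}$ (for $G$) or the corresponding set for $G'$ is determined solely by whether $a\leq i-1$ or $b\leq i-1$, and these conditions match between the two graphs.

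First, I would define $\phi$ by sending a partial flow $f_{G,L_{i-1}}$ to the partial flow $g_{G',L'_{i-1}}$ that agrees with $f$ on all edges common to the two graphs, with the additional assignments $g(a,b+1):=f(a,d)$ whenever $a\leq i-1$ and $g(b,d):=f(b,b+1)$ whenever $b\leq i-1$ (the latter only relevant at level $i=b+1$). Next, I would verify that $g$ is valid on $[i-1]$ by checking the netflow condition at each vertex $v\in[i-1]$. For $v\notin\{a,b\}$ this is automatic because no incident edges are affected. For $v=a$, the outflow at $a$ is preserved because we have merely relocated one outgoing flow value from $(a,d)$ to $(a,b+1)$, and the inflow at $a$ is unchanged because every modified edge has its head in $\{b+1,d\}$, neither of which equals $a$. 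The argument at $v=b$ (needed only when $i=b+1$) is identical: outflow at $b$ is preserved by swapping the flow values on $(b,b+1)$ and $(b,d)$, and inflow at $b$ is unchanged since $b<b+1,d$.

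Finally, $\phi$ is a bijection because the analogous recipe applied in the opposite direction produces a map $V_i(G')\to V_i(G)$ that is manifestly a two-sided inverse of $\phi$; non-negativity and integrality are preserved since we are merely relabeling flow values. The main obstacle I anticipate is primarily bookkeeping, namely handling the cases $i\leq b$ (where only the edge pair $(a,d)\leftrightarrow(a,b+1)$ appears in $L_{i-1}$) and $i=b+1$ (where both swaps matter) uniformly, and being careful that the relevant edges really do belong to the respective $L_{i-1}$ sets. Once that is organized, the netflow-preservation step is immediate from the observation that the modified edges have matching tails but heads strictly greater than $b$.
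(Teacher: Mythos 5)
Your construction is essentially the same as the paper's: swap the flow values on the two modified edge pairs $(a,d)\leftrightarrow(a,b+1)$ and $(b,b+1)\leftrightarrow(b,d)$, keep all other flows, and check netflow preservation at the only affected vertices in $[i-1]$, namely $a$ and $b$ (noting heads of modified edges are $b+1,d>b\geq i-1$). The paper defines $\phi$ in the opposite direction ($T_{G'}\to T_G$) and proves injectivity and surjectivity separately rather than observing the manifest two-sided inverse, but these are cosmetic differences.
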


\begin{proof}
    We construct a map $\phi$ between the level $i$ nodes of $T_{G'}(\w_n)$ and $T_{G}(\w_n)$ for $1\leq i\leq b+1$. 
    Given a level $i$ node $f_{G',L_{i-1}}$ in $T_{G'}(\w_n)$, we map it to the following node in $T_G(\w_n)$: 
    \[ 
    \phi(f_{G',L_{i-1}})(j,k):=f_{G,L_{i-1}}(j,k) = \left\{ 
    \begin{array}{ll}
        f_{G',L_{i-1}}(a,b+1), &  (j,k)=(a,d) \\
        f_{G',L_{i-1}}(b,d), & (j,k)=(b,b+1) \\
        f_{G',L_{i-1}}(j,k), & \text{else}
    \end{array}\right. 
    \]
    
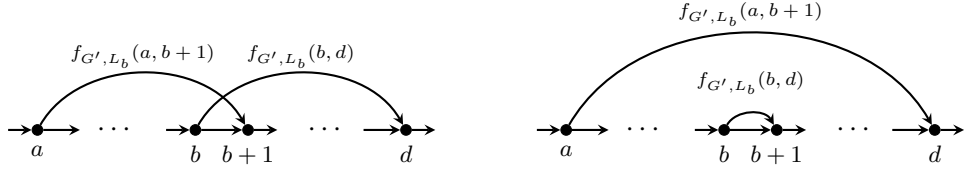
\begin{figure}
\begin{center}
\begin{tikzpicture}

\begin{scope}[xshift=0, yshift=0, scale=0.7]

	\vertex[fill,label=below:\footnotesize{$a$}](a1) at (1,0) {};
	\node(a2) at (2,0) {};
	\vertex[fill,label=below:\footnotesize{$b$}](a3) at (4,0) {};
	\vertex[fill,label=below:\footnotesize{$b+1$}](a4) at (5,0) {};
	\node(a5) at (7,0) {};
	\vertex[fill,label=below:\footnotesize{$d$}](a6) at (8,0) {};
        \node(b2) at (0.25,0) {};
        \node(b4) at (2.75,0) {};
        \node(b6) at (3.25,0) {};
        \node(b8) at (5.75,0) {};
        \node(b10) at (6.25,0) {};
        \node(b12) at (8.75,0) {};
        \draw[-stealth, thick] (b2) to[out=0,in=180] (a1);
        \draw[-stealth, thick] (a1) to[out=0,in=180] (a2);
        \draw[-stealth, thick] (a1) to[out=60,in=120] (a4);
        \draw[-stealth, thick] (b6) to[out=0,in=180] (a3);
        \draw[-stealth, thick] (a3) to[out=0,in=180] (a4);
	\draw[-stealth, thick] (a3) to[out=60,in=120] (a6);
        \draw[-stealth, thick] (a4) to[out=0,in=180] (b8);
        \draw[-stealth, thick] (a5) to[out=0,in=180] (a6);
        \draw[-stealth, thick] (a6) to[out=0,in=180] (b12);
        \node[] at (2.5,0) {$\cdots$};
        \node[] at (6.5,0) {$\cdots$};
        \node[] at (3,1.4) {\tiny $f_{G',L_b}(a,b+1)$};
        \node[] at (6,1.4) {\tiny $f_{G',L_b}(b,d)$};

\end{scope}

\begin{scope}[xshift=200, yshift=0, scale=0.7]

	\vertex[fill,label=below:\footnotesize{$a$}](a1) at (1,0) {};
	\node(a2) at (2,0) {};
	\vertex[fill,label=below:\footnotesize{$b$}](a3) at (4,0) {};
	\vertex[fill,label=below:\footnotesize{$b+1$}](a4) at (5,0) {};
	\node(a5) at (7,0) {};
	\vertex[fill,label=below:\footnotesize{$d$}](a6) at (8,0) {};
        \node(b2) at (0.25,0) {};
        \node(b4) at (2.75,0) {};
        \node(b6) at (3.25,0) {};
        \node(b8) at (5.75,0) {};
        \node(b10) at (6.25,0) {};
        \node(b12) at (8.75,0) {};
        \draw[-stealth, thick] (b2) to[out=0,in=180] (a1);
        \draw[-stealth, thick] (a1) to[out=0,in=180] (a2);
        \draw[-stealth, thick] (a1) to[out=60,in=120] (a6);
        \draw[-stealth, thick] (b6) to[out=0,in=180] (a3);
        \draw[-stealth, thick] (a3) to[out=0,in=180] (a4);
	\draw[-stealth, thick] (a3) to[out=60,in=120] (a4);
        \draw[-stealth, thick] (a4) to[out=0,in=180] (b8);
        \draw[-stealth, thick] (a5) to[out=0,in=180] (a6);
        \draw[-stealth, thick] (a6) to[out=0,in=180] (b12);
        \node[] at (2.5,0) {$\cdots$};
        \node[] at (6.5,0) {$\cdots$};
        \node[] at (4.5,2.2) {\tiny $f_{G',L_b}(a,b+1)$};
        \node[] at (4.5,0.9) {\tiny $f_{G',L_b}(b,d)$};

\end{scope}

\end{tikzpicture}
\end{center}
\caption{A level $b+1$ node $f_{G',L_b}$ and $\phi(f_{G',L_b})$, respectively.}
\label{fig:Levelb+1Corr}
\end{figure}
        
    This defines a partial flow on $L_{i-1}$ in $G$. 
    In order for it to correspond to a node in $T_G(\w_n)$, the partial flow must be valid on on $[i-1]$. 
    Since $G$ and $G'$ have edges in common, we need only verify validity for the vertices where edges have changed. 
    The vertices where edges have changed are $a$, $b$, $b+1$ and $d$. 
    However, since $1\leq i\leq b+1$, the only vertices in $[i-1]$ are $a$ and $b$. 
    But for both $a$ and $b$, this map doesn't change the edges going in to $a$ or $b$, and preserves the outflow from $a$ and $b$. 
    Thus, netflow at $a$ and $b$ is preserved, so $f_{G,L_{i-1}}$ is valid on $a$ and $b$. 

   Given two distinct partial flows in $T_{G'}(\w_n)$, it follows directly from the construction of $\phi$ that their images will have distinct flows. Thus $\phi$ is injective. 

   Given a level $i$ node in $T_G(\w_n)$, call it $f_{G,L_{i-1}}$, consider the following partial flow $f$ on $G'$ defined by
   \[ 
    f(j,k) := \left\{ 
    \begin{array}{ll}
        f_{G,L_{i-1}}(a,d), &  (j,k)=(a,b+1) \\
        f_{G,L_{i-1}}(b,b+1), & (j,k)=(b,d) \\
        f_{G,L_{i-1}}(j,k), & \text{else}
    \end{array}\right. .
    \]
    Next we verify that $f$ is a level $i$ node in $T_{G'}(\w_n)$. 
    Observe that it is equivalent to check that $f$ is partial flow on $L_{i-1}$ valid on vertices $[i-1]$ (all in $G$).
    If we look at the subgraph of $G'$ induced by the vertices $[i-1]\setminus\{a,b\}$, $G'$ agrees with $G$ and $f$ agrees with $f_{G,L_{i-1}}$. 
    Further, $f$ is defined in a way that preserves the netflow at vertices $a$ and $b$. 

    Finally, we show that $\phi$ maps $f$ to $f_{G,L_{i-1}}$. For every edge $(j,k)\in L_{i-1}$ in $G$, applying our construction for $\phi$ followed by our assumptions about $f$ gives
    \begin{align*}
        \phi(f)(j,k) &= \left\{ 
    \begin{array}{ll}
        f(a,b+1), &  (j,k)=(a,d) \\
        f(b,d), & (j,k)=(b,b+1) \\
        f(j,k), & \text{else}
    \end{array}\right. \\
    &= \left\{ 
    \begin{array}{ll}
        f_{G,L_{i-1}}(a,d), &  (j,k)=(a,d) \\
        f_{G,L_{i-1}}(b,b+1), & (j,k)=(b,b+1) \\
        f_{G,L_{i-1}}(j,k), & \text{else}
    \end{array}\right. \\
    &= f_{G,L_{i-1}}(j,k)
    \end{align*} \, .
\end{proof} 

\begin{example} \label{ex:Levelb+1Corr}
    In Figure~\ref{fig:fd-tree11} and Figure~\ref{fig:fd-tree10} we have the flow decomposition trees $T_{G_{11}}(\w_4)$ and $T_{G_{10}}(\w_4)$, respectively.
    In order to fit the tree on the page, the bottom-level nodes have been stacked vertically below their ancestors. 
    In this example, $G_{11}$ can be obtained by performing an interchange operation on $G_{10}$ with edges $(2,5)$ and $(3,4)$.
    Thus, we have $a=2$, $b=3$, and $d=5$. 
    Observe that the for each level up to $b+1=4$, the nodes of $T_{G_{11}}(\w_4)$ and $T_{G_{10}}(\w_4)$ are in bijection. 
    The flow decomposition trees have been arranged in a way that the bijective pairs lie in the same relative position in their respective trees. 
    It is also worth noting that for the netflow vector $\w_n$, the outflow at vertex $1$ is $0$, so the root always has a single branch. 
\end{example}

\begin{figure}
\begin{center}

\end{center}
\caption{$T_{G_{10}}(\w_4)$.}
\label{fig:fd-tree10}
\end{figure}

Now that we have established basic properties of flow decomposition trees, we seek to organize these DAGs in a way that ensures $T_{G'}(\w_n)$ has fewer leaves than $T_G(\w_n)$, where $G\in\mathscr{F}_{n,3}$ has nested edges that are interchanged to obtain $G'$. 
To do this, we focus on the level $b+1$ nodes. 
Throughout the remainder of this section, assume $G$ and $G'$ are related by the edge interchange structure just described.

\begin{lemma} \label{lemma:BadNodes}
  Assume $G\in\mathscr{F}_{n,3}$ has nested edges that are interchanged to obtain $G'$. 
    Let $f_{G',L_b}$ be a level $b+1$ node in $T_{G'}(\w_n)$, with corresponding level $b+1$ node $\phi(f_{G',L_b})$ in $T_{G}(\w_n)$.
    Then 
    \[
    l(f_{G',L_b}) > l(\phi(f_{G',L_b}))
    \]
    if and only if $f_{G',L_b}(a,b+1)>f_{G',L_b}(b,d)$.
\end{lemma}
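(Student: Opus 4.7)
My plan is to reduce each leaf count to a Kostant partition function on a common downstream subgraph, and then compare these counts via a path-addition injection along the spine. Let $u := f_{G',L_b}(a,b+1)$, $v := f_{G',L_b}(b,d)$, and $s$ denote the flow that $f_{G',L_b}$ assigns to the spine edge $e_b = (b,b+1)$. By Lemma~\ref{lemma:Levelb+1Corr}, the partial flow $\phi(f_{G',L_b})$ in $G$ carries flow $u$ on the overpass $(a,d)$, flow $v$ on the nested second copy of $(b,b+1)$, and agrees with $f_{G',L_b}$ on every other edge of $L_b$.

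First, let $H$ denote the subgraph with vertex set $\{b+1,\ldots,n+1\}$ and edge set $\{(i,j)\in E(G): i\ge b+1\}$. Since the interchange only modifies edges that leave $a$ or $b$ (both $\le b$), $H$ is the same for $G$ and $G'$. Extending a level-$(b+1)$ partial flow to a leaf of the decomposition tree amounts to choosing an integer flow on $H$ whose netflow vector absorbs the flow already deposited at vertices $b+1$ and $d$. A direct accounting shows that the resulting vectors $\nu_G$ and $\nu_{G'}$ agree outside $\{b+1,d\}$ and satisfy
\[
\nu_{G'}(b+1) = s+u+1,\ \nu_{G'}(d) = a_d + v, \qquad \nu_G(b+1) = s+v+1,\ \nu_G(d) = a_d + u,
\]
so $l(f_{G',L_b}) = K_H(\nu_{G'})$ and $l(\phi(f_{G',L_b})) = K_H(\nu_G)$.

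If $u=v$, then $\nu_G=\nu_{G'}$ and the leaf counts coincide. Suppose $u>v$. By Corollary~\ref{cor:spine}, the spine edges $e_{b+1},\ldots,e_{d-1}$ form a directed path in $H$ from $b+1$ to $d$, so I would define a map $\Phi$ from integer $\nu_G$-flows on $H$ to integer $\nu_{G'}$-flows on $H$ by adding $u-v$ to the flow on each $e_i$ for $b+1\le i\le d-1$ and leaving all other edge-flows unchanged. A short netflow check (outflow at $b+1$ increases by $u-v$, inflow at $d$ increases by $u-v$, and inflow/outflow changes cancel at intermediate vertices) confirms that $\Phi$ is well-defined, and $\Phi$ is injective because inverting it on its image is edge-wise subtraction by $u-v$. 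To upgrade to strict inequality, I would exhibit a $\nu_{G'}$-flow $f^\ast$ with $f^\ast(e_{b+1}) = 0 < u-v$: route the required outflow $s+u+1$ out of $b+1$ entirely through its non-spine outgoing edge (a duplicate copy of $(b+1,b+2)$ when $b_b=0$, or the long edge $(b+1,j)$ when $b_b=1$), then complete $f^\ast$ downstream by splitting the required integer outflow at each vertex $b+2,\ldots,n$ arbitrarily among its two outgoing edges. Such an $f^\ast$ cannot lie in $\Phi(\{\nu_G\text{-flows}\})$, forcing $K_H(\nu_{G'}) > K_H(\nu_G)$, which is $l(f_{G',L_b}) > l(\phi(f_{G',L_b}))$. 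The case $u<v$ is symmetric: add $v-u$ along the same spine path to inject $\nu_{G'}$-flows into $\nu_G$-flows, with the same type of non-surjective witness.

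The main obstacle I anticipate is verifying the greedy completion in the strict-inequality step: one must confirm that regardless of the binary string $\b$ encoding $G$ (and thus of the precise in/out-edges at $b+2,\ldots,n$), the flow sending all of $b+1$'s outflow through the non-spine edge can always be extended to a valid integer flow on the rest of $H$. This should follow because each interior vertex of $H$ has out-degree two and any nonnegative integer outflow requirement can always be split between its two outgoing edges, but writing this out carefully requires a small case analysis driven by the structure of $G_\b$.
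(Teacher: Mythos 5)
Your proposal is correct and matches the paper's approach: both arguments establish the equivalence by constructing the same injection (adding $u-v$ to the flows on the spine edges $e_{b+1},\ldots,e_{d-1}$, which corresponds exactly to the paper's rule $g(e_i)+g(a,d)-g(b,b+1)$) and then exhibiting a flow that routes everything out of $b+1$ through the non-spine edge so that $e_{b+1}$ carries zero flow and hence cannot lie in the image. Your reframing of the two leaf counts as Kostant partition function evaluations $K_H(\nu_G)$ and $K_H(\nu_{G'})$ on the common downstream subgraph $H$ is a tidy way to express what the paper states directly in terms of completions of partial flows, and the greedy-completion step you flag as the remaining obstacle is precisely what the paper resolves by writing down the witness $h$ explicitly and verifying its netflows.
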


\begin{proof}
    Let $f_{G',L_b}$ be a level $b+1$ node in $T_{G'}(\w_n)$.
    Note that a leaf of $f_{G',L_b}$ is a (complete) flow $f$ on $G'$ that agrees with $f_{G',L_b}$. 
    Similarly, a leaf in $T_{G}(\w_n)$ of the level $b+1$ node $\phi(f_{G',L_b})$ is a (complete) flow $g$ on $G$ that agrees with $\phi(f_{G',L_b})$. 
    
    For the backward implication of our if and only if, suppose $f_{G',L_b}(a,b+1)>f_{G',L_b}(b,d)$. We build an injection from the leaves of $\phi(f_{G',L_b})$ into the leaves of $f_{G',L_b}$, then show that the injection is not surjective.
    Given a leaf $g$ of the level $b+1$ node $\phi(f_{G',L_b})$ in $T_G(\w_n)$, we map it to the leaf $f$ of $T_{G'}(\w_n)$, where $f$ is defined as 
    \[
        f(i,j) = \left\{ \begin{array}{ll}
            g(a,d), & (i,j)=(a,b+1) \\
            g(b,b+1), & (i,j)=(b,d) \\
            g(e_i)+g(a,d)-g(b,b+1), & (i,j)=e_i, \ b+1\leq i\leq d-1 \\
            g(i,j), & \text{else}
        \end{array}
        \right. .
    \]
    This map preserves the flows on the non-spine edges leaving $a$ and $b$, which forces more flow into vertex $b+1$ in $G$, which we adjust for by increasing the flow on the spine between vertices $b+1$ and $d$. 
    
    Next, we verify that $f$ is a valid flow on $G'$. That is, the flows on $E(G')$ prescribed by $f$ agree with the netflow vector $\w_n$. For vertices $i\neq b+1,\ldots, d$, we have 
    \begin{align*}
        (f\ \text{outflow at}\ i) - (f\ \text{inflow at}\ i) 
        &= (g\ \text{outflow at}\ i) - (g\ \text{inflow at}\ i) 
        = w_i .
    \end{align*}
    For the vertex $b+1$,
    \begin{align*}
        (f\ \text{outflow at}\ b+1) - (f\ \text{inflow at}\ b+1) 
        =& [f(e_{b+1}) + f(b+1,b+2)] - [f(e_b)+f(a,b+1)] \\
        =& [g(e_{b+1}) + g(a,d)-g(b,b+1) + g(b+1,b+2)] \\
        &- [g(e_b) + g(a,d)] \\
        =& [g(e_{b+1}) + g(b+1,b+2)] - [g(e_b) + g(b,b+1)] \\
        =& (g\ \text{outflow at}\ b+1) - (g\ \text{inflow at}\ b+1) \\
        =& w_{b+1} .
    \end{align*}
    If there are any vertices $b+2\leq i \leq d-1$, note that all four incident edges are consecutive edges, else we would violate the unique overpass condition of Lemma~\ref{lemma:UniqueOverpass}. 
    So we have 
    \begin{align*}
        (f\ \text{outflow at}\ i) - (f\ \text{inflow at}\ i) 
        =& [f(e_{i}) + f(i,i+1)] - [f(e_{i-1})+f(i-1,i)] \\
        =& [g(e_{i}) + g(a,d)-g(b,b+1) + g(i,i+1)] \\
        &- [g(e_{i-1}) + g(a,d)-g(b,b+1) + g(i-1,i)] \\
        =& [g(e_{i}) + g(i,i+1)] - [g(e_{i-1}) + g(i-1,i)] \\
        =& (g\ \text{outflow at}\ i) - (g\ \text{inflow at}\ i) \\
        =& w_{i} .
    \end{align*}
    The last vertex to check is $d$, which has two outgoing edges $e_d$ (the spine) and $(d,j)$ for some $j$. Our netflow is then 
    \begin{align*}
        (f\ \text{outflow at}\ d) - (f\ \text{inflow at}\ d) 
        =& [f(e_{d}) + f(d,j)] - [f(e_{d-1})+f(d-1,d)] \\
        =& [g(e_{d}) + g(d,j)] \\
        &- [g(e_{d-1}) + g(a,d)-g(b,b+1) + g(d-1,d)] \\
        =& [g(e_{d}) + g(d,j)] - [g(e_{d-1}) + g(a,d)] \\
        =& (g\ \text{outflow at}\ d) - (g\ \text{inflow at}\ d) \\
        =& w_{d} .
    \end{align*}
    Thus $f$ is indeed a valid flow on $G'$. Now we show that $f$ agrees with $f_{G',L_b}$. For every edge $(i,j)\in L_b$, we apply the construction of $f$, the fact that $g$ agrees with $\phi(f_{G',L_b})$, and the definition of $\phi$, respectively, to get that 
    \begin{align*}
        f(i,j) &= \left\{ \begin{array}{ll}
            g(a,d), & (i,j)=(a,b+1) \\
            g(b,b+1), & (i,j)=(b,d) \\
            g(i,j), & \text{else}
        \end{array}
        \right. \\
        &= \left\{ \begin{array}{ll}
            \phi(f_{G',L_b})(a,d), & (i,j)=(a,b+1) \\
            \phi(f_{G',L_b})(b,b+1), & (i,j)=(b,d) \\
            \phi(f_{G',L_b})(i,j), & \text{else}
        \end{array}
        \right. \\
        &= \left\{ \begin{array}{ll}
            f_{G',L_b}(a,b+1), & (i,j)=(a,b+1) \\
            f_{G',L_b}(b,d), & (i,j)=(b,d) \\
            f_{G',L_b}(i,j), & \text{else}
        \end{array}
        \right. \\
        &= f_{G',L_b}(i,j) .
    \end{align*}

    At this point, we have only verified that the map $g\mapsto f$ is well-defined. 
    To prove it is an injective map, let $g_1$ and $g_2$ be (flows on $G$ that give) leaves in $T_G(\w_n)$. 
    Suppose that both $g_1$ and $g_2$ map to a leaf $f$ in $T_{G'}(\w_n)$. Then 
    \begin{align*}
        &\left\{ \begin{array}{ll}
            g_1(a,d), & (i,j)=(a,b+1) \\
            g_1(b,b+1), & (i,j)=(b,d) \\
            g_1(e_i)+g_1(a,d)-g_1(b,b+1), & (i,j)=e_i, \ b+1\leq i\leq d-1 \\
            g_1(i,j), & \text{else}
        \end{array}
        \right. \\
        =&
        \left\{ \begin{array}{ll}
            g_2(a,d), & (i,j)=(a,b+1) \\
            g_2(b,b+1), & (i,j)=(b,d) \\
            g_2(e_i)+g_2(a,d)-g_2(b,b+1), & (i,j)=e_i, \ b+1\leq i\leq d-1 \\
            g_2(i,j), & \text{else}
        \end{array}
        \right.  .
    \end{align*}
    We can immediately conclude that $g_1(a,d)=g_2(a,d)$ and $g_1(b,b+1)=g_2(b,b+1)$, and that $g_1(i,j)=g_2(i,j)$ for every edge that isn't a spine edge between $b+1\leq i\leq d-1$.
    Then it follows that $g_1(e_i)=g_2(e_i)$ for every spine edge $e_i$ such that $b+1\leq i\leq d-1$. 
    Hence $g_1=g_2$, so our map is indeed injective. 

    To prove that the inequality $l(f_{G',L_b}) \gneq l(\phi(f_{G',L_b}))$ is strict, we describe a leaf of $f_{G',L_b}$ that cannot be in the image of our injection. 
    Consider the flow $h$ on $G'$, where 
    \[
        h(i,j) = \left\{ \begin{array}{ll}
            f_{G',L_b}(i,j), & (i,j)\in L_b \\
            i-1-f_{G',L_b}(b,d), & (i,j)=(i,i+1), \ b+1\leq i\leq d-2 \\
            d-1-f_{G',L_b}(b,d), & (i,j)=e_{d-1} \\
            i-1, & (i,j)=e_i, \ d\leq i\leq n  \\
            0, & \text{else}
        \end{array}
        \right.  .
    \]
    To verify that $h$ is a valid flow on $G'$, we need only check the netflows at vertices $b+1,\ldots,n$ since $h$ agrees with $f_{G',L_b}$ on other vertices. 
    At vertex $b+1$, 
    \begin{align*}
        (h\ \text{outflow at}\ b+1) - (h\ \text{inflow at}\ b+1) 
        =& [h(b+1,b+2) + h(e_{b+1})] - [h(a,b+1) + h(e_b) ] \\
        =& [b-f_{G',L_b}(b,d) + 0 ] \\
        &- [f_{G',L_b}(a,b+1) + f_{G',L_b}(e_b)] \\
        =& (f_{G',L_b}\ \text{outflow at}\ b+1) - (f_{G',L_b}\ \text{inflow at}\ b+1) 
    \end{align*}
    by Proposition~\ref{prop:OverpassDeterminesInflow}. 

    If there are any vertices $i$ for $b+2\leq i\leq d-2$,
    \begin{align*}
        (h\ \text{outflow at}\ i) - (h\ \text{inflow at}\ i) 
        =& [h(i,i+1) + h(e_{i})] - [h(i-1,i) + h(e_{i-1})] \\
        =& [i-1-f_{G',L_b}(b,d) + 0] - [i-2-f_{G',L_b}(b,d) + 0] \\
        =& 1 \\
        =& w_i .
    \end{align*}
    
   At vertex $d-1$, there is some non-spine edge $(d-1,j)$. 
   Then our netflow is 
   \begin{align*}
        (h\ \text{outflow at}\ d) - (h\ \text{inflow at}\ d) 
        =& [h(d-1,j) + h(e_{d-1})] - [h(d-2,d-1) + h(e_{d-2})] \\
        =& [0 + d-2-f_{G',L_b}(b,d)] \\
        &- [0+ d-3- f_{G',L_b}(b,d) ] \\
        =& 1 \\
        =& w_{d-1} . 
    \end{align*}
            
    At vertex $d$, there is some non-spine edge $(d,j)$. 
    Then our netflow is
    \begin{align*}
        (h\ \text{outflow at}\ d) - (h\ \text{inflow at}\ d) 
        =& [h(d,j) + h(e_d)] - [h(b,d) + h(e_{d-1})] \\
        =& [d-1 + 0] \\
        &- [f_{G',L_b}(b,d) + d-2-f_{G',L_b}(b,d)] \\
        =& 1 \\
        =& w_d . 
    \end{align*}
    
    At a vertex $i$ for $d+1\leq i \leq n$, there are incident non-spine edges $(k,i)$ and $(i,j)$ for some $k\geq d-1$ and $j\geq 1$. 
    Then our netflow is 
    \begin{align*}
    	(h\ \text{outflow at}\ i) - (h\ \text{inflow at}\ i) 
        =& [h(i,j) + h(e_i)] - [h(k,i) + h(e_{e-1})] \\
        =& [0 + i-1] - [0 + i-2] \\
        =& 1 \\
        =& w_i . 
    \end{align*}
    
    At vertex $n+1$, there are two incoming non-spine edges $(n,n+1)$ and $(k,n+1)$ for some $k\geq d-1$. 
    Then our netflow is 
    \begin{align*}
    	(h\ \text{outflow at}\ n+1) - (h\ \text{inflow at}\ n+1) 
        =& 0- [h(k,n+1) + h(n,n+1) + h(e_{n})] \\
        =& -(0+0+n-1) \\
        =& -(n-1) \\
        =& w_{n+1} . 
    \end{align*}

    We know the leaf associated to $h$ cannot be in the image of the injection since the flow along the spine between vertices $b+1$ and $d$ is $0$, and the injection puts a nonzero amount of flow on the spine through the rule $g(e_i)+g(a,d)-g(b,b+1)$ for $e_i$ where $b+1\leq i\leq d-1$. 

    We argue the forward direction of our if and only if by contrapositive.
    Suppose $f_{G',L_b}(a,b+1)\leq f_{G',L_b}(b,d)$.
    We build an injection from the leaves of $f_{G',L_b}$ into the leaves of $\phi(f_{G',L_b})$. 
    This map and argument is nearly identical to the backward direction. 
    This time, our injection goes from the leaves of $f_{G',L_b}$ into the leaves of $\phi(f_{G',L_b})$. 
    Given a leaf $f$ of the level $b+1$ node $f_{G',L_b}$ in $T_G(\w_n)$, we map it to the leaf $g$ of $T_{G}(\w_n)$, where $g$ is defined as 
    \[
        g(i,j) = \left\{ \begin{array}{ll}
            f(a,b+1), & (i,j)=(a,d) \\
            f(b,d), & (i,j)=(b,b+1) \\
            f(e_i)+f(b,d)-f(a,b+1), & (i,j)=e_i, \ b+1\leq i\leq d-1 \\
            f(i,j), & \text{else}
        \end{array}
        \right. .
    \]
    Once again, we verify the netflow equality condition is satisfied, check that $g$ restricts to $\phi(f_{G',L_b})$, and then verify that the map $f\mapsto g$ is injective. 
    Just like in the backwards direction, the only vertices on which the netflows will change are $b+1,\ldots, d$. 
    At vertex $b+1$, we know the non-spine outgoing edge is consecutive, as $(a,d)$ is already the unique overpass edge to $b+2$ (as in Lemma~\ref{lemma:UniqueOverpass}). 
    So we have 
    \begin{align*}
    	(g\ \text{outflow at}\ b+1) - (g\ \text{inflow at}\ b+1) 
	=& [g(b+1,b+2) + g(e_{b+1})] - [g(b,b+1) + g(e_b)] \\
	=& [ f(b+1,b+2) + f(e_{b+1})+f(b,d)+f(a,b+1)] \\
	&- [ f(b,d) + f(e_b) ] \\
	=& [ f(b+1,b+2) + f(e_{b+1})] - [ f(a,b+1) + f(e_b) ] \\
	=& (f\ \text{outflow at}\ b+1) - (f\ \text{inflow at}\ b+1) \\
	=& w_{b+1} .
    \end{align*}
    If there are any vertices $i$ such that $b+2\leq i\leq d-1$, note that all four incident edges are consecutive edges, else we would violate the unique overpass condition of Lemma~\ref{lemma:UniqueOverpass}. 
    So we have 
    \begin{align*}
    	(g\ \text{outflow at}\ i) - (g\ \text{inflow at}\ i) 
	=& [g(i,i+1) + g(e_{i})] - [g(i-1,i) + g(e_{i-1})] \\
	=& [ f(i,i+1) + f(e_i) + f(b,d) - f(a,b+1) ] \\
	&- [ f(i-1,i) + f(e_{i-1}) + f(b,d) - f(a,b+1) ] \\
	=& [ f(i,i+1) + f(e_i) ] - [ f(i-1,i) + f(e_{i-1}) ] \\
	=& (f\ \text{outflow at}\ i) - (f\ \text{inflow at}\ i) \\
	=& w_{i} .
    \end{align*}
    At vertex $d$, we have a non-spine outgoing edge $(d,j)$ for some $j$. 
    Then our netflow is 
    \begin{align*}
    	(g\ \text{outflow at}\ d) - (g\ \text{inflow at}\ d) 
	=& [g(d,j) + g(e_{d})] - [g(a,d) + g(e_{d-1})] \\
	=& [ f(d,j) + f(e_d) ] \\
	&- [ f(a,b+1) + f(e_{d-1}) + f(b,d) - f(a,b+1) ] \\
	=& [ f(d,j) + f(e_d) ] - [ f(e_{d-1}) + f(b,d) ] \\
	=& (f\ \text{outflow at}\ d) - (f\ \text{inflow at}\ d) \\
	=& w_{d} .
    \end{align*}
    Thus $g$ is indeed a valid flow on $G$. 
    Next we verify that $g$ agrees with $\phi(f_{G',L_b})$ on $L_b$. 
    Given an edge in $G$, $(i,j)\in L_b$, since $f$ is a leaf of $f_{G',L_b}$ 
    \begin{align*}
    	g(i,j) &= \left\{ \begin{array}{ll}
            f(a,b+1), & (i,j)=(a,d) \\
            f(b,d), & (i,j)=(b,b+1) \\
            f(i,j), & \text{else}
        \end{array}
        \right. \\
        &=  \left\{ \begin{array}{ll}
            f_{G',L_b}(a,b+1), & (i,j)=(a,d) \\
            f_{G',L_b}(b,d), & (i,j)=(b,b+1) \\
            f_{G',L_b}(i,j), & \text{else}
        \end{array}
        \right. \\ 
        &= \phi(f_{G',L_b})(i,j) .
    \end{align*}
    
    Finally, we prove that this mapping is injective.
    Let $f_1$ and $f_2$ be (flows on $G'$ that give) leaves in $T_G'(\w_n)$. 
    Suppose that both $f_1$ and $f_2$ map to a leaf $g$ in $T_{G}(\w_n)$. Then 
    \begin{align*}
        &\left\{ \begin{array}{ll}
            f_1(a,b+1), & (i,j)=(a,d) \\
            f_1(b,d), & (i,j)=(b,b+1) \\
            f_1(e_i)+f_1(b,d)-f_1(b,b+1), & (i,j)=e_i, \ b+1\leq i\leq d-1 \\
            f_1(i,j), & \text{else}
        \end{array}
        \right. \\
        =&
        \left\{ \begin{array}{ll}
            f_2(a,b+1), & (i,j)=(a,d) \\
            f_2(b,d), & (i,j)=(b,b+1) \\
            f_2(e_i)+f_2(b,d)-f_2(a,b+1), & (i,j)=e_i, \ b+1\leq i\leq d-1 \\
            f_2(i,j), & \text{else}
        \end{array}
        \right.  .
    \end{align*}
    We can immediately conclude that $f_1(a,b+1)=f_2(a,b+1)$ and $f_1(b,d)=f_2(b,d)$, and that $f_1(i,j)=f_2(i,j)$ for every edge that isn't a spine edge between $b+1\leq i\leq d-1$.
    Then it follows that $f_1(e_i)=f_2(e_i)$ for every spine edge $e_i$ such that $b+1\leq i\leq d-1$. 
    Hence $f_1=f_2$, so our map is indeed injective. 
\end{proof}

Having established a relationship between the number of leaves below level $b+1$ notes in $G$ and $G'$, our next goal is to relate the total number of leaves in each of the flow decomposition trees.

\begin{definition} \label{def:BadNodes}
  Assume $G\in\mathscr{F}_{n,3}$ has nested edges that are interchanged to obtain $G'$. 
  We call a level $b+1$ node $f_{G',L_b}$ in $T_{G'}(\w_n)$ that satisfies the condition of Lemma~\ref{lemma:BadNodes} a \emph{bad} node.
  We denote the set of bad level $b+1$ nodes of $T_{G'}(\w_n)$ by $V^>_{b+1}(G')$.
  Otherwise, we say $f_{G',L_b}$ is \emph{good}.
  We denote the set of good level $b+1$ nodes of $T_{G'}(\w_n)$ by $V^\leq_{b+1}(G')$.
\end{definition}

We can use the $\phi$ bijection to apply the good and bad classifications to the level $b+1$ nodes of $T_G(\w_n)$.
Using this, we get that a node $f_{G,L_b}$ in $T_G(\w_n)$ is bad if and only if $f_{G,L_b}(a,d)>f_{G,L_b}(b,b+1)$.

\begin{example} \label{ex:BadNode}
    If we look at the five level $b+1=4$ nodes in Figure~\ref{fig:fd-tree11}, we see one bad node (the second node from the left). 
    The said node has $f_{G_{11},L_3}(2,4)=1>f_{G_{11},L_3}(3,5)=0$. 
    When looking at the corresponding level $b+1=4$ nodes in Figure~\ref{fig:fd-tree10}, we see that every node has more leaves than its corresponding level $b+1$ node in $T_{G_{11}}$, except for the second node from the left corresponding to the aforementioned bad level-$4$ node in Figure~\ref{fig:fd-tree11}.  
\end{example}

At this point, we have partitioned the level $b+1$ nodes in $T_{G'}(\w_n)$ into nodes that have too few leaves when compared to $T_G(\w_n)$ (bad) and nodes that have the same or more leaves when compared to $T_G(\w_n)$ (good). 
Next, we control the number of bad nodes relative to the good nodes.

\begin{lemma} \label{lemma:CoordSwap}
  Assume $G\in\mathscr{F}_{n,3}$ has nested edges that are interchanged to obtain $G'$. 
    For each bad node $f_{G',L_b}$ in $T_{G'}(\w_n)$, there exists a unique good node $g_{G',L_b}$ such that for every edge $(i,j)\in L_b$,
    \[ 
    g_{G',L_b}(i,j) = \left\{
    \begin{array}{ll}
        f_{G',L_b}(b,d), & (i,j)=(a,b+1) \\
        f_{G',L_b}(a,b+1), & (i,j)=(b,d) \\
        f_{G',L_b}(e_i)+f_{G',L_b}(a,b+1)-f_{G',L_b}(b,d), & (i,j)= e_i,\  a\leq i\leq b-1 \\
        f_{G',L_b}(i,j), & \text{else}
    \end{array}
    \right. .
    \]
    This yields an injection $\psi$ from the bad nodes of $T_{G'}(\w_n)$ into the good nodes of $T_{G'}(\w_n)$, where $\psi(f_{G',L_b}):=g_{G',L_b}$
\end{lemma}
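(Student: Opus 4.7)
My plan is to define $\psi(f_{G',L_b}) := g_{G',L_b}$ by the swap-and-compensate formula in the statement, and then verify in turn that: (i) $g_{G',L_b}$ is a well-defined partial flow on $L_b$ that is valid on $[b]$, so it actually names a level $b+1$ node of $T_{G'}(\w_n)$; (ii) it lies in $V^{\leq}_{b+1}(G')$; and (iii) the assignment $f_{G',L_b} \mapsto g_{G',L_b}$ is injective. Set $\Delta := f_{G',L_b}(a,b+1) - f_{G',L_b}(b,d)$; since $f_{G',L_b}$ is bad, $\Delta > 0$.

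For (i), nonnegativity of $g_{G',L_b}$ is immediate: the swap exchanges two nonnegative values, and the spine additions of $\Delta > 0$ keep those edges nonnegative. The vertex-by-vertex netflow check is the main computational step. At vertex $a$, the outflow on $(a,b+1)$ drops by $\Delta$ while the outflow on $e_a$ rises by $\Delta$, and no incoming edge is altered, so the contribution cancels. At vertex $b$, the outflow on $(b,d)$ rises by $\Delta$ while the inflow on $e_{b-1}$ rises by $\Delta$, again canceling. For an intermediate vertex $i$ with $a < i < b$, I would invoke Lemma~\ref{lemma:UniqueOverpass}: the unique edge of $G'$ passing over $i$ is $(a,b+1)$, which forces every non-spine edge incident to $i$ to be a consecutive multi-edge, and the formula does not modify any of those; the spine edges $e_{i-1}$ and $e_i$ both change by $+\Delta$, so their contributions cancel. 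For vertices $i < a$, no incident edge is touched.

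For (ii), the swap immediately gives $g_{G',L_b}(a,b+1) = f_{G',L_b}(b,d) < f_{G',L_b}(a,b+1) = g_{G',L_b}(b,d)$, placing $g_{G',L_b}$ in $V^{\leq}_{b+1}(G')$ by Definition~\ref{def:BadNodes}. For (iii), the construction is invertible on its image: given $g_{G',L_b}$ in the image, recover $f_{G',L_b}(a,b+1) = g_{G',L_b}(b,d)$ and $f_{G',L_b}(b,d) = g_{G',L_b}(a,b+1)$, and undo the spine additions by subtracting the now-positive quantity $g_{G',L_b}(b,d) - g_{G',L_b}(a,b+1)$ from each $g_{G',L_b}(e_i)$ for $a \leq i \leq b-1$, leaving every other value unchanged. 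Any two distinct bad nodes therefore map to distinct good nodes.

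The main obstacle is the intermediate-vertex check in (i): one must be certain that no non-spine edge incident to a vertex $i$ with $a < i < b$ is affected by the formula, and this relies essentially on the uniqueness of the overpass edge from Lemma~\ref{lemma:UniqueOverpass}, together with the observation that the only non-spine outgoing edge from $a$ (resp.\ incoming edge to $b+1$) in $G'$ in the relevant region is precisely $(a,b+1)$. Once this structural fact is pinned down, each of the three verifications is routine.
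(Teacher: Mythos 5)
Your argument is correct and matches the paper's: the same swap-and-compensate map $\psi$, the same vertex-by-vertex netflow verification on $[b]$ using the unique-overpass structure, the same observation that the swap lands among the good nodes, and the same invert-the-construction argument for injectivity. One small caveat that applies equally to the paper's own proof: at $i=a+1$ the non-spine in-edge need not be a consecutive multi-edge (a $1$-type edge $(p,a+1)$ with $p<a$ may enter $a+1$), but the netflow check still succeeds because the only edges incident to such an intermediate $i$ that $\psi$ modifies are the two spine edges $e_{i-1}$ and $e_i$, which both shift by the same $+\Delta$, making the precise identity of the unmodified incident edges immaterial.
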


\begin{proof}
    Let $f_{G',L_b}$ be a bad node in $T_{G'}(\w_n)$. 
    Define $\psi(f_{G',L_b}):=g_{G',L_b}$, where   
    \[ 
    g_{G',L_b}(i,j) = \left\{
    \begin{array}{ll}
        f_{G',L_b}(b,d), & (i,j)=(a,b+1) \\
        f_{G',L_b}(a,b+1), & (i,j)=(b,d) \\
        f_{G',L_b}(e_i)+f_{G',L_b}(a,b+1)-f_{G',L_b}(b,d), & (i,j)= e_i,\  a\leq i\leq b-1 \\
        f_{G',L_b}(i,j), & \text{else}
    \end{array}
    \right. .
    \]
    To show $g_{G',L_b}$ is a valid level $b+1$ node, we need only check the netflow at vertices in $[b]$ where the in- or outflow has changed: $a,\ldots,b$.
    At vertex $a$, there is an incoming edge $(i,a)$ for some $i$. 
    Then our netflow at $a$ is
    \begin{align*}
    	(g_{G',L_b}\ \text{outflow at}\ a) - (g_{G',L_b}\ \text{inflow at}\ a) 
	=& [g_{G',L_b}(a,b+1) + g_{G',L_b}(e_{a})] - [g_{G',L_b}(i,a) + g_{G',L_b}(e_{a-1})] \\
	=& [ f_{G',L_b}(b,d) + f_{G',L_b}(e_a)+f_{G',L_b}(a,b+1)-f_{G',L_b}(b,d) ] \\
	&- [  f_{G',L_b}(i,a) + f_{G',L_b}(e_{a-1}) ] \\
	=& [ f_{G',L_b}(a,b+1) + + f_{G',L_b}(e_a) ] - [ f_{G',L_b}(i,a) + f_{G',L_b}(e_{a-1}) ] \\
	=& (f_{G',L_b}\ \text{outflow at}\ a) - (f_{G',L_b}\ \text{inflow at}\ a) \\
	=& w_{a} .
    \end{align*}
    
    If there are any vertices $i$ such that $a+1\leq i\leq b-1$, all incident edges must be consecutive, else we would violate Lemma~\ref{lemma:UniqueOverpass}. 
    Then our netflow at $i$ is
    \begin{align*}
    	(g_{G',L_b}\ \text{outflow at}\ i) - (g_{G',L_b}\ \text{inflow at}\ i) 
	=& [g_{G',L_b}(i,i+1) + g_{G',L_b}(e_{i})] - [g_{G',L_b}(i-1,i) + g_{G',L_b}(e_{i-1})] \\
	=& [ f_{G',L_b}(i,i+1) + f_{G',L_b}(e_i)+f_{G',L_b}(a,b+1)-f_{G',L_b}(b,d) ] \\
	&- [  f_{G',L_b}(i-1,i) + f_{G',L_b}(e_{i-1})+f_{G',L_b}(a,b+1)-f_{G',L_b}(b,d) ] \\
	=& [ f_{G',L_b}(i,i+1) + f_{G',L_b}(e_i) ] - [ f_{G',L_b}(i-1,i) + f_{G',L_b}(e_{i-1})] \\
	=& (f_{G',L_b}\ \text{outflow at}\ i) - (f_{G',L_b}\ \text{inflow at}\ i) \\
	=& w_{i} .
    \end{align*}
    
    At vertex $b$, we have some incoming edge $(i,b)$.
    Our netflow is 
    \begin{align*}
    	(g_{G',L_b}\ \text{outflow at}\ b) - (g_{G',L_b}\ \text{inflow at}\ b) 
	=& [g_{G',L_b}(b,d) + g_{G',L_b}(e_{b})] - [g_{G',L_b}(i,b) + g_{G',L_b}(e_{b-1})] \\
	=& [ f_{G',L_b}(a,b+1) + f_{G',L_b}(e_b) ] \\
	&- [  f_{G',L_b}(i,b) + f_{G',L_b}(e_{b-1}) + f_{G',L_b}(a,b+1)-f_{G',L_b}(b,d) ] \\
	=& [ f_{G',L_b}(b,d) + f_{G',L_b}(e_b) ] - [ f_{G',L_b}(i,b) + f_{G',L_b}(e_{b-1}) ] \\
	=& (f_{G',L_b}\ \text{outflow at}\ b) - (f_{G',L_b}\ \text{inflow at}\ b) \\
	=& w_{b} .
    \end{align*}
    Thus $g_{G',L_b}$ is valid on $[b]$. 
    
    To verify that $g_{G',L_b}$ is a good node as in Lemma~\ref{lemma:BadNodes}, observe that 
    \begin{align*}
    	g_{G',L_b}(a,b+1) &= f_{G',L_b}(b,d) \leq f_{G',L_b}(a,b+1) = g_{G',L_b}(b,d). 
    \end{align*}
    
    Finally, we check that $\psi$ is injective. 
    Let $f_1$ and $f_2$ be two bad level \(b+1\) nodes in \(T_{G'}(\w_n)\). 
    Suppose $\psi(f_1) = \psi(f_2)$. 
    Then we can immediately conclude that $f_1(a,b+1)=f_2(a,b+1)$, $f_1(b,d)=f_2(b,d)$, and $f_1(i,j)=f_2(i,j)$ for any edge in $L_b$ that isn't $(a,b+1)$, $(b,d)$ or a spine edge between $a$ and $b-1$. 
    This in turn implies that $f_1(i,j)=f_2(i,j)$ on the spine edges between $a$ and $b-1$.
    Thus $f_1(i,j) = f_2(i,j)$ for all $(i,j)\in L_b$.
\end{proof}

\begin{example} \label{ex:CoordSwap}
    Considering the single bad node $f_{G_{11},L_3}$ in Figure~\ref{fig:fd-tree11}  (the second-from-left level 4 node), we see that there is a unique level $b+1=4$ node $g_{G_{11},L_3}$ in $T_{G_{11}}(\w_4)$ with 
    \begin{align*}
        g_{G_{11},L_3}(2,4) &= 0 = f_{G_{11},L_3}(3,5), \\
        g_{G_{11},L_3}(3,5) &= 1 = f_{G_{11},L_3}(2,4), \\
        g_{G_{11},L_3}(e_2) &= 1 = f_{G_{11},L_3}(e_2)+f_{G_{11},L_3}(3,5)-f_{G_{11},L_3}(2,4),
    \end{align*}
    and the same flows on all other edges. 
\end{example}


\begin{definition}
    We use the level $b+1$ node bijection $\psi$ for $T_{G'}(\w_n)$ to define a version of $\psi$ for the bad nodes of $T_G(\w_n)$. 
    More specifically, define 
    $\psi(f_{G,L_b}):=\phi(\psi(f_{G',L_b}))$ 
    where $\phi(f_{G',L_b})=f_{G,L_b}$. 
\end{definition}

\begin{lemma} \label{lemma:commute}
  Assume $G\in\mathscr{F}_{n,3}$ has nested edges that are interchanged to obtain $G'$. 
    The construction of $\psi$ for level $b+1$ nodes in $T_G(\w_n)$ is an injection. 
\end{lemma}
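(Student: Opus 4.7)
The plan is to recognize that, by definition, the map $\psi$ on level $b+1$ nodes of $T_G(\w_n)$ is the composition
\[
\psi \;=\; \phi \,\circ\, \psi_{G'} \,\circ\, \phi^{-1},
\]
where $\psi_{G'}$ denotes the injection from Lemma~\ref{lemma:CoordSwap} on the bad level $b+1$ nodes of $T_{G'}(\w_n)$, and $\phi$ is the bijection from Lemma~\ref{lemma:Levelb+1Corr}. Hence the argument amounts to verifying that each factor is injective on the appropriate subset and that the domains and codomains of the three factors fit together correctly.

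First I would observe that the restriction of $\phi$ (resp.\ $\phi^{-1}$) to good nodes (resp.\ bad nodes) remains a bijection onto good nodes (resp.\ bad nodes) of the other tree. Indeed, by the formula defining $\phi$ in Lemma~\ref{lemma:Levelb+1Corr}, for any level $b+1$ node $f_{G',L_b}$ of $T_{G'}(\w_n)$ we have
\[
\phi(f_{G',L_b})(a,d) = f_{G',L_b}(a,b+1) \quad \text{and} \quad \phi(f_{G',L_b})(b,b+1) = f_{G',L_b}(b,d).
\]
Combining this with the characterizations in Lemma~\ref{lemma:BadNodes} and the subsequent remark, the node $f_{G',L_b}$ is bad (i.e.\ satisfies $f_{G',L_b}(a,b+1) > f_{G',L_b}(b,d)$) if and only if $\phi(f_{G',L_b})$ is bad (i.e.\ satisfies $f_{G,L_b}(a,d) > f_{G,L_b}(b,b+1)$). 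So $\phi$ restricts to a bijection between bad nodes of the two trees and, separately, between good nodes of the two trees.

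Given this, the composition is well-defined on the bad level $b+1$ nodes of $T_G(\w_n)$: starting from such a node, $\phi^{-1}$ lands on a bad node of $T_{G'}(\w_n)$, Lemma~\ref{lemma:CoordSwap} sends it injectively to a good node of $T_{G'}(\w_n)$, and then $\phi$ returns a good node of $T_G(\w_n)$. Injectivity of the composition follows because each of the three factors is injective on its domain: $\phi^{-1}$ is a bijection restricted to bad nodes, $\psi_{G'}$ is injective by Lemma~\ref{lemma:CoordSwap}, and $\phi$ is a bijection restricted to good nodes. There is no real obstacle here; the only delicate point is confirming that the good/bad classification is preserved by $\phi$, which is a direct reading of the formula for $\phi$ against the defining inequality for bad nodes.
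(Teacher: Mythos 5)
Your proof is correct and takes essentially the same approach as the paper, which simply notes that $\psi$ on $T_G(\w_n)$ is a composition of the bijection $\phi$ (and its inverse) with the injection $\psi$ on $T_{G'}(\w_n)$. You spell out the extra bookkeeping (that $\phi$ preserves the good/bad classification, so the domains and codomains line up), which the paper leaves implicit since it defines the classification on $T_G(\w_n)$ via $\phi$ in the first place.
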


\begin{proof}
    This follows from the fact that $\phi$ is a bijection and $\psi$ as defined for $T_{G'}(\w_n)$ is an injection.
\end{proof}

Thus, every bad level $b+1$ node in $T_{G'}(\w_n)$ corresponds with a unique good node in $T_{G'}(\w_n)$, and each of these nodes has a bijective corresponding node in $T_G(\w_n)$.
We next prove our most crucial intermediate result, pertaining to the number of leaves of these two pairs of nodes.

\begin{lemma} \label{lemma:KeyLemma}
  Assume $G\in\mathscr{F}_{n,3}$ has nested edges that are interchanged to obtain $G'$. 
    Given a bad level $b+1$ node $f_{G',L_b}$ in $T_{G'}(\w_n)$,
    \[
    l(f_{G',L_b})=l(\phi(\psi(f_{G',L_b}))) \quad\text{and}\quad l(\psi(f_{G',L_b}))=l(\phi(f_{G',L_b})) .
    \]
    Further, the four nodes comprise two pairs of nodes, one pair in $T_{G'}(\w_n)$ and another in $T_{G}(\w_n)$, and each pair has the same total number of leaves. 
    That is, 
    \[
    l(f_{G',L_b}) + l(\psi(f_{G',L_b})) = l(\phi(f_{G',L_b})) + l(\phi(\psi(f_{G',L_b}))) .
    \]
\end{lemma}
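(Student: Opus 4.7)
The plan is to exploit a locality property of the flow decomposition tree: the number of leaves below a level $b+1$ node depends only on (a) the edges of the underlying DAG with source vertex $\geq b+1$, and (b) the vector of inherited inflows at vertices $b+1, \ldots, n+1$ that come from the partial flow on $L_b$. Indeed, any extension of such a partial flow is determined by assigning flows to edges with source $\geq b+1$ subject to netflow conservation at those vertices, and the only data passed in from $L_b$ is how much flow arrives from outside. Consequently, two level $b+1$ nodes, possibly in different flow decomposition trees, that agree on the data (a) and (b) have subtrees with the same number of leaves.

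First I would observe that $G$ and $G'$ differ only in the four edges $\{(a,d),(b,b+1)\}$ versus $\{(a,b+1),(b,d)\}$, all of which have source $\leq b$. Consequently the sub-multigraphs of edges with source $\geq b+1$ in $G$ and $G'$ are identical, so condition (a) above matches regardless of which pair of nodes is compared across the two trees.

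Next, I would compare the inherited inflow vectors at each vertex $j \geq b+1$ for the four nodes $f_{G',L_b}$, $\psi(f_{G',L_b})$ in $T_{G'}(\w_n)$, and $\phi(f_{G',L_b})$, $\phi(\psi(f_{G',L_b}))$ in $T_G(\w_n)$. The spine adjustments to $e_a, \ldots, e_{b-1}$ built into the definition of $\psi$ only affect edges ending at vertices $\leq b$, so they contribute nothing to inherited inflows at vertices $\geq b+1$. The map $\phi$ preserves flow values on all edges except the interchanged four. A short case check at vertices $b+1$ and $d$ then shows that $f_{G',L_b}$ and $\phi(\psi(f_{G',L_b}))$ yield identical inherited inflow vectors, and similarly $\psi(f_{G',L_b})$ and $\phi(f_{G',L_b})$ agree. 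At any vertex $j \geq b+1$ with $j \neq b+1, d$, all contributing edges in $L_b$ are left untouched by both maps, so the inflow vectors trivially agree there.

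Combining these observations, the subtree rooted at $f_{G',L_b}$ in $T_{G'}(\w_n)$ has the same number of leaves as the subtree rooted at $\phi(\psi(f_{G',L_b}))$ in $T_G(\w_n)$, giving the first equality $l(f_{G',L_b}) = l(\phi(\psi(f_{G',L_b})))$; the analogous argument for the other pair yields $l(\psi(f_{G',L_b})) = l(\phi(f_{G',L_b}))$. Summing the two identities gives the displayed equation. The main obstacle is the careful inflow bookkeeping at vertex $d$, where the second in-edge to $d$ (aside from $(b,d)$ or $(a,d)$) could have source either $\leq b$ or $\geq b+1$; but in either case it is left alone by both $\psi$ and $\phi$, so its contribution is the same on both sides of the comparison and drops out of the argument.
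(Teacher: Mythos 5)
Your proposal is correct and takes essentially the same approach as the paper's proof, which likewise hinges on the observation that the number of leaves below a level $b+1$ node depends only on the forward edge structure (edges with source $\geq b+1$, which are identical in $G$ and $G'$) and the inherited inflows at vertices $b+1$ and $d$. The paper states this more tersely with a pointer to its figures and a citation of Proposition~\ref{prop:OverpassDeterminesInflow}, whereas you spell out the inflow bookkeeping in full; the underlying argument is the same.
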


\begin{proof}
    Let $f_{G',L_b}$ be a bad node in $T_{G'}(\w_n)$. 
    Our proof is illuminated by Figures~\ref{fig:KeyLemma1} and~\ref{fig:KeyLemma2}.
    
    Note that in Figures~\ref{fig:KeyLemma1} and~\ref{fig:KeyLemma2} the solid lines represent edges whose flows have been decided by the partial flow and dotted lines represent edges whose flows will be decided later in the flow decomposition tree. 
    (This is different than what dotted edges represented in Figures~\ref{fig:fd-tree11} and~\ref{fig:fd-tree10}.)

    In Figure~\ref{fig:KeyLemma1}, we see that the nodes $f_{G',L_b}$ and $\phi(\psi(f_{G',L_b}))$ will have the same number of leaves because they have the same flows in to vertices $b+1$ (by Proposition~\ref{prop:OverpassDeterminesInflow}) and $d$, and identical edges remaining on which to direct those flows.
    The same argument applies in Figure~\ref{fig:KeyLemma2} to $\phi(f_{G',L_b})$ and $\psi(f_{G',L_b})$. 

    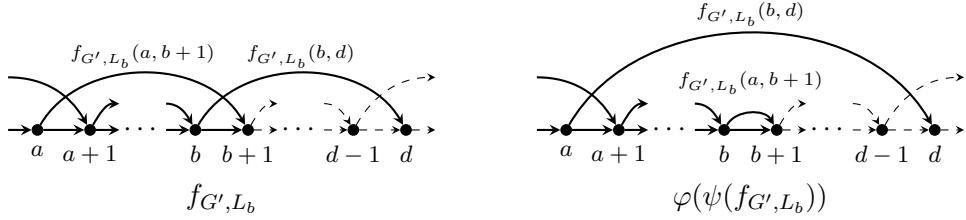
\begin{figure}
    \begin{center}
    \begin{tikzpicture}
    
    \begin{scope}[xshift=0, yshift=0, scale=0.7]
    
            \node[] at (4.5,-1.3) {$f_{G',L_b}$};
    	\vertex[fill,label=below:\footnotesize{$a$}](a1) at (1,0) {};
    	\vertex[fill,label=below:\footnotesize{$a+1$}](a2) at (2,0) {};
    	\vertex[fill,label=below:\footnotesize{$b$}](a3) at (4,0) {};
    	\vertex[fill,label=below:\footnotesize{$b+1$}](a4) at (5,0) {};
    	\vertex[fill,label=below:\footnotesize{$d-1$}](a5) at (7,0) {};
    	\vertex[fill,label=below:\footnotesize{$d$}](a6) at (8,0) {};
            \node(b1) at (0.25,1) {};
            \node(b2) at (0.25,0) {};
            \node(b3) at (2.75,0.5) {};
            \node(b4) at (2.75,0) {};
            \node(b5) at (3.25,0.5) {};
            \node(b6) at (3.25,0) {};
            \node(b7) at (5.75,0.5) {};
            \node(b8) at (5.75,0) {};
            \node(b9) at (6.25,0.5) {};
            \node(b10) at (6.25,0) {};
            \node(b11) at (8.75,1) {};
            \node(b12) at (8.75,0) {};
    	\draw[-stealth, thick] (b1) to[out=0,in=120] (a2);
            \draw[-stealth, thick] (b2) to[out=0,in=180] (a1);
            \draw[-stealth, thick] (a1) to[out=0,in=180] (a2);
            \draw[-stealth, thick] (a1) to[out=60,in=120] (a4);
            \draw[-stealth, thick] (a2) to[out=60,in=180] (b3);
            \draw[-stealth, thick] (a2) to[out=0,in=180] (b4);
            \draw[-stealth, thick] (b5) to[out=0,in=120] (a3);
            \draw[-stealth, thick] (b6) to[out=0,in=180] (a3);
            \draw[-stealth, thick] (a3) to[out=0,in=180] (a4);
    	\draw[-stealth, thick] (a3) to[out=60,in=120] (a6);
            \draw[-stealth, dashed] (a4) to[out=60,in=180] (b7);
            \draw[-stealth, dashed] (a4) to[out=0,in=180] (b8);
            \draw[-stealth, dashed] (b9) to[out=0,in=120] (a5);
            \draw[-stealth, dashed] (b10) to[out=0,in=180] (a5);
            \draw[-stealth, dashed] (a5) to[out=0,in=180] (a6);
            \draw[-stealth, dashed] (a5) to[out=60,in=180] (b11);
            \draw[-stealth, dashed] (a6) to[out=0,in=180] (b12);
            \node[] at (3,0) {$\cdots$};
            \node[] at (6,0) {$\cdots$};
            \node[] at (3,1.4) {\tiny $f_{G',L_b}(a,b+1)$};
            \node[] at (6,1.4) {\tiny $f_{G',L_b}(b,d)$};

    \end{scope}
    
    \begin{scope}[xshift=200, yshift=0, scale=0.7]
    
            \node[] at (4.5,-1.3) {$\phi(\psi(f_{G',L_b}))$};
    	\vertex[fill,label=below:\footnotesize{$a$}](a1) at (1,0) {};
    	\vertex[fill,label=below:\footnotesize{$a+1$}](a2) at (2,0) {};
    	\vertex[fill,label=below:\footnotesize{$b$}](a3) at (4,0) {};
    	\vertex[fill,label=below:\footnotesize{$b+1$}](a4) at (5,0) {};
    	\vertex[fill,label=below:\footnotesize{$d-1$}](a5) at (7,0) {};
    	\vertex[fill,label=below:\footnotesize{$d$}](a6) at (8,0) {};
            \node(b1) at (0.25,1) {};
            \node(b2) at (0.25,0) {};
            \node(b3) at (2.75,0.5) {};
            \node(b4) at (2.75,0) {};
            \node(b5) at (3.25,0.5) {};
            \node(b6) at (3.25,0) {};
            \node(b7) at (5.75,0.5) {};
            \node(b8) at (5.75,0) {};
            \node(b9) at (6.25,0.5) {};
            \node(b10) at (6.25,0) {};
            \node(b11) at (8.75,1) {};
            \node(b12) at (8.75,0) {};
    	\draw[-stealth, thick] (b1) to[out=0,in=120] (a2);
            \draw[-stealth, thick] (b2) to[out=0,in=180] (a1);
            \draw[-stealth, thick] (a1) to[out=0,in=180] (a2);
            \draw[-stealth, thick] (a1) to[out=60,in=120] (a6);
            \draw[-stealth, thick] (a2) to[out=60,in=180] (b3);
            \draw[-stealth, thick] (a2) to[out=0,in=180] (b4);
            \draw[-stealth, thick] (b5) to[out=0,in=120] (a3);
            \draw[-stealth, thick] (b6) to[out=0,in=180] (a3);
            \draw[-stealth, thick] (a3) to[out=0,in=180] (a4);
    	\draw[-stealth, thick] (a3) to[out=60,in=120] (a4);
            \draw[-stealth, dashed] (a4) to[out=60,in=180] (b7);
            \draw[-stealth, dashed] (a4) to[out=0,in=180] (b8);
            \draw[-stealth, dashed] (b9) to[out=0,in=120] (a5);
            \draw[-stealth, dashed] (b10) to[out=0,in=180] (a5);
            \draw[-stealth, dashed] (a5) to[out=0,in=180] (a6);
            \draw[-stealth, dashed] (a5) to[out=60,in=180] (b11);
            \draw[-stealth, dashed] (a6) to[out=0,in=180] (b12);
            \node[] at (3,0) {$\cdots$};
            \node[] at (6,0) {$\cdots$};
            \node[] at (4.5,2.2) {\tiny $f_{G',L_b}(b,d)$};
            \node[] at (4.5,0.9) {\tiny $f_{G',L_b}(a,b+1)$};

    \end{scope}
    
    \end{tikzpicture}
    \end{center}
    \caption{Comparing nodes $f_{G',L_b}$ and $\phi(\psi(f_{G',L_b}))$.}
    \label{fig:KeyLemma1}
    \end{figure}

    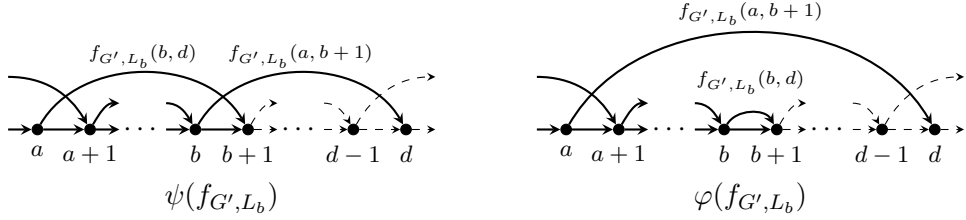
\begin{figure}
    \begin{center}
    \begin{tikzpicture}

    	\begin{scope}[xshift=0, yshift=0, scale=0.7]
    
            \node[] at (4.5,-1.3) {$\psi(f_{G',L_b})$};
    	\vertex[fill,label=below:\footnotesize{$a$}](a1) at (1,0) {};
    	\vertex[fill,label=below:\footnotesize{$a+1$}](a2) at (2,0) {};
    	\vertex[fill,label=below:\footnotesize{$b$}](a3) at (4,0) {};
    	\vertex[fill,label=below:\footnotesize{$b+1$}](a4) at (5,0) {};
    	\vertex[fill,label=below:\footnotesize{$d-1$}](a5) at (7,0) {};
    	\vertex[fill,label=below:\footnotesize{$d$}](a6) at (8,0) {};
            \node(b1) at (0.25,1) {};
            \node(b2) at (0.25,0) {};
            \node(b3) at (2.75,0.5) {};
            \node(b4) at (2.75,0) {};
            \node(b5) at (3.25,0.5) {};
            \node(b6) at (3.25,0) {};
            \node(b7) at (5.75,0.5) {};
            \node(b8) at (5.75,0) {};
            \node(b9) at (6.25,0.5) {};
            \node(b10) at (6.25,0) {};
            \node(b11) at (8.75,1) {};
            \node(b12) at (8.75,0) {};
    	\draw[-stealth, thick] (b1) to[out=0,in=120] (a2);
            \draw[-stealth, thick] (b2) to[out=0,in=180] (a1);
            \draw[-stealth, thick] (a1) to[out=0,in=180] (a2);
            \draw[-stealth, thick] (a1) to[out=60,in=120] (a4);
            \draw[-stealth, thick] (a2) to[out=60,in=180] (b3);
            \draw[-stealth, thick] (a2) to[out=0,in=180] (b4);
            \draw[-stealth, thick] (b5) to[out=0,in=120] (a3);
            \draw[-stealth, thick] (b6) to[out=0,in=180] (a3);
            \draw[-stealth, thick] (a3) to[out=0,in=180] (a4);
    	\draw[-stealth, thick] (a3) to[out=60,in=120] (a6);
            \draw[-stealth, dashed] (a4) to[out=60,in=180] (b7);
            \draw[-stealth, dashed] (a4) to[out=0,in=180] (b8);
            \draw[-stealth, dashed] (b9) to[out=0,in=120] (a5);
            \draw[-stealth, dashed] (b10) to[out=0,in=180] (a5);
            \draw[-stealth, dashed] (a5) to[out=0,in=180] (a6);
            \draw[-stealth, dashed] (a5) to[out=60,in=180] (b11);
            \draw[-stealth, dashed] (a6) to[out=0,in=180] (b12);
            \node[] at (3,0) {$\cdots$};
            \node[] at (6,0) {$\cdots$};
            \node[] at (3,1.4) {\tiny $f_{G',L_b}(b,d)$};
            \node[] at (6,1.4) {\tiny $f_{G',L_b}(a,b+1)$};

    \end{scope}
    
    \begin{scope}[xshift=200, yshift=0, scale=0.7]
    
            \node[] at (4.5,-1.3) {$\phi(f_{G',L_b})$}; 
    	\vertex[fill,label=below:\footnotesize{$a$}](a1) at (1,0) {};
    	\vertex[fill,label=below:\footnotesize{$a+1$}](a2) at (2,0) {};
    	\vertex[fill,label=below:\footnotesize{$b$}](a3) at (4,0) {};
    	\vertex[fill,label=below:\footnotesize{$b+1$}](a4) at (5,0) {};
    	\vertex[fill,label=below:\footnotesize{$d-1$}](a5) at (7,0) {};
    	\vertex[fill,label=below:\footnotesize{$d$}](a6) at (8,0) {};
            \node(b1) at (0.25,1) {};
            \node(b2) at (0.25,0) {};
            \node(b3) at (2.75,0.5) {};
            \node(b4) at (2.75,0) {};
            \node(b5) at (3.25,0.5) {};
            \node(b6) at (3.25,0) {};
            \node(b7) at (5.75,0.5) {};
            \node(b8) at (5.75,0) {};
            \node(b9) at (6.25,0.5) {};
            \node(b10) at (6.25,0) {};
            \node(b11) at (8.75,1) {};
            \node(b12) at (8.75,0) {};
    	\draw[-stealth, thick] (b1) to[out=0,in=120] (a2);
            \draw[-stealth, thick] (b2) to[out=0,in=180] (a1);
            \draw[-stealth, thick] (a1) to[out=0,in=180] (a2);
            \draw[-stealth, thick] (a1) to[out=60,in=120] (a6);
            \draw[-stealth, thick] (a2) to[out=60,in=180] (b3);
            \draw[-stealth, thick] (a2) to[out=0,in=180] (b4);
            \draw[-stealth, thick] (b5) to[out=0,in=120] (a3);
            \draw[-stealth, thick] (b6) to[out=0,in=180] (a3);
            \draw[-stealth, thick] (a3) to[out=0,in=180] (a4);
    	\draw[-stealth, thick] (a3) to[out=60,in=120] (a4);
            \draw[-stealth, dashed] (a4) to[out=60,in=180] (b7);
            \draw[-stealth, dashed] (a4) to[out=0,in=180] (b8);
            \draw[-stealth, dashed] (b9) to[out=0,in=120] (a5);
            \draw[-stealth, dashed] (b10) to[out=0,in=180] (a5);
            \draw[-stealth, dashed] (a5) to[out=0,in=180] (a6);
            \draw[-stealth, dashed] (a5) to[out=60,in=180] (b11);
            \draw[-stealth, dashed] (a6) to[out=0,in=180] (b12);
            \node[] at (3,0) {$\cdots$};
            \node[] at (6,0) {$\cdots$};
            \node[] at (4.5,2.2) {\tiny $f_{G',L_b}(a,b+1)$};
            \node[] at (4.5,0.9) {\tiny $f_{G',L_b}(b,d)$};

    \end{scope}

    \end{tikzpicture}
    \end{center}
    \caption{Comparing nodes $\psi(f_{G',L_b})$ and $\phi(f_{G',L_b})$.}
    \label{fig:KeyLemma2}
    \end{figure}
    \end{proof}

\begin{example} \label{ex:KeyLemma}
    Looking again at the bad node $f_{G_{11},L_3}$ in Figure~\ref{fig:fd-tree11}, we found its unique good node pair $g_{G_{11},L_3}$ in Example~\ref{ex:CoordSwap}. 
    If we then find the level $b+1$ bijective correspondents for these two nodes in $T_{G_{10}}(\w_4)$, $f_{G_{10},L_3}$ and $g_{G_{10},L_3}$ respectively, we see that 
    \[
    l(f_{G_{11},L_3})=4=l(g_{G_{10},L_3}) \quad\text{and}\quad l(g_{G_{11},L_3})=3=l(f_{G_{10},L_3}) .
    \]
\end{example}

We are now ready to state our main result of this section.

\begin{theorem} \label{thm:MainThm}
    Let $G\in\mathscr{F}_{n,3}$ and suppose $G'$ is obtained by interchanging a pair of nested edges in $G$. 
    Then 
    \[
    \vol \F_1(G') \leq \vol \F_1(G). 
    \]
\end{theorem}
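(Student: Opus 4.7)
The plan is to use Proposition~\ref{prop:VolumeSpecialization} to translate the volume comparison into a comparison of leaf counts: $\vol \F_1(G) = K_G(\w_n) = l(G,0,f_{G,L_0})$ and similarly for $G'$. By Proposition~\ref{prop:ConsecutiveCrosses}, since the interchange in $G$ turns a nested pair $(a,d), (b,b+1)$ into the crossed pair $(a,b+1),(b,d)$ in $G'$, the setup of Lemma~\ref{lemma:Levelb+1Corr} applies. I would decompose the leaf count over the level $b+1$ nodes:
\[
\vol \F_1(G') \;=\; \sum_{f \in V_{b+1}(G')} l(f), \qquad \vol \F_1(G) \;=\; \sum_{f \in V_{b+1}(G')} l(\phi(f)),
\]
where the second equality uses that $\phi$ is a bijection between the level $b+1$ nodes of $T_{G'}(\w_n)$ and $T_G(\w_n)$.

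Next I would split $V_{b+1}(G')$ into $V^{>}_{b+1}(G')$ (bad) and $V^{\leq}_{b+1}(G')$ (good) as in Definition~\ref{def:BadNodes}, and within the good set further distinguish those in the image $\psi(V^{>}_{b+1}(G'))$ from those not in the image. Writing
\[
\vol \F_1(G) - \vol \F_1(G') \;=\; \sum_{f \in V_{b+1}(G')} \bigl[l(\phi(f)) - l(f)\bigr],
\]
I would group each bad node $f$ together with its (unique, by Lemma~\ref{lemma:CoordSwap}) good partner $\psi(f)$. Lemma~\ref{lemma:KeyLemma} gives the exact cancellation
\[
\bigl[l(\phi(f)) - l(f)\bigr] + \bigl[l(\phi(\psi(f))) - l(\psi(f))\bigr] = 0,
\]
so every paired contribution vanishes.

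What remains in the sum are the good nodes $f$ not in the image of $\psi$, each contributing $l(\phi(f)) - l(f)$. For such an $f$, the condition $f(a,b+1) \leq f(b,d)$ holds, so Lemma~\ref{lemma:BadNodes} (in its contrapositive form) yields $l(f) \leq l(\phi(f))$, making each residual term nonnegative. Summing, $\vol \F_1(G) - \vol \F_1(G') \geq 0$, proving the theorem.

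The main obstacle is really already absorbed into the auxiliary lemmas: Lemma~\ref{lemma:KeyLemma} (equality of paired leaf counts) and Lemma~\ref{lemma:BadNodes} (the good/bad dichotomy governing direction of inequality) do the heavy lifting, and $\psi$ provides the injective matching needed to cancel the ``wrong-way'' contributions. The proof itself is then a bookkeeping argument that organizes the level $b+1$ nodes so that bad contributions cancel in pairs and only nonnegative residues survive.
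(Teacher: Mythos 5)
Your proposal is correct and follows essentially the same approach as the paper: both use the level-$(b{+}1)$ bijection $\phi$, partition into bad and good nodes, pair bad nodes with their $\psi$-partners, invoke Lemma~\ref{lemma:KeyLemma} to cancel those paired contributions, and invoke Lemma~\ref{lemma:BadNodes} to show the remaining terms are nonnegative. You phrase it as a telescoping difference while the paper presents a chain of (in)equalities, but the underlying bookkeeping is identical.
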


\begin{proof}
    We have established that for any DAG $G\in\mathscr{F}_{n,3}$,
    \[ 
    \vol \F_1(G) = K_G(\w_n) = \# \text{ of leaves in } T_G(\w_n).
    \]
    Given $G$ and $G'$ as in the theorem statement, we compare the number of leaves in their respective flow decomposition trees by summing over the leaves of the level $b+1$ nodes. Starting in $T_{G'}(\w_n)$,
    \begin{align*}
        \# \text{ of leaves in } T_{G'}(\w_n) &= \sum_{f_{G',L_b}\in V_{b+1}(G')} l(f_{G',L_b}) \\
        &= \sum_{f_{G',L_b}\in V^>_{b+1}(G')} [ l(f_{G',L_b}) + l(\psi(f_{G',L_b})) ] + \sum_{f_{G',L_b}\in V^\leq_{b+1}(G')\setminus \psi(V^>_{b+1}(G'))} l(f_{G',L_b}) \\
        &= \sum_{f_{G',L_b}\in V^>_{b+1}(G')} [ l(\phi(f_{G',L_b})) + l(\phi(\psi(f_{G',L_b}))) ] + \sum_{f_{G',L_b}\in V^\leq_{b+1}(G')\setminus \psi(V^>_{b+1}(G'))} l(f_{G',L_b}) \\
        &\leq \sum_{f_{G',L_b}\in V^>_{b+1}(G')} [ l(\phi(f_{G',L_b})) + l(\phi(\psi(f_{G',L_b}))) ] + \sum_{f_{G',L_b}\in V^\leq_{b+1}(G')\setminus \psi(V^>_{b+1}(G'))} l(\phi(f_{G',L_b})) \\
        &= \sum_{f_{G,L_b}\in V_{b+1}(G)} l(f_{G,L_b}) \\
        &= \# \text{ of leaves in } T_{G}(\w_n).
    \end{align*}
\end{proof}

\begin{corollary} \label{cor:OrderRevering}
    The function $\vol:\mathscr{F}_{n,3}\ra \Z_{\geq0}$, which maps a DAG to the volume of its flow polytope, is order-reversing with respect the the Boolean (equivalently, interchange) partial order on $\mathscr{F}_{n,3}$.
\end{corollary}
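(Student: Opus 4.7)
The plan is to derive this corollary directly from Theorem~\ref{thm:MainThm} by a routine transitivity argument along cover relations of the Boolean poset. By Corollary~\ref{cor:3poset}, the partial order on $\mathscr{F}_{n,3}$ induced by the interchange operation is isomorphic (via the bijection of Theorem~\ref{thm:bijection}) to the Boolean partial order on $(\Z/2\Z)^{n-2}$. Under this isomorphism, the cover relations are precisely those pairs $(G_\b, G_{\b'})$ where $\b'$ differs from $\b$ by changing exactly one $0$ to a $1$. By Lemma~\ref{lemma:0interchangeable}, every such $0$-indexed edge is nested inside a unique other edge, and by Lemma~\ref{lemma:interchange=0to1} passing from $G_\b$ to $G_{\b'}$ amounts to interchanging that nested pair of edges. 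Hence each cover relation in $\mathscr{F}_{n,3}$ is realized by a single interchange of nested edges, to which Theorem~\ref{thm:MainThm} applies.

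Given $G \leq G'$ in the Boolean partial order on $\mathscr{F}_{n,3}$, I would pick any saturated chain
\[
G = G_0 \lessdot G_1 \lessdot \cdots \lessdot G_k = G',
\]
which exists because $(\Z/2\Z)^{n-2}$ is a ranked poset (rank equal to the number of $1$s in the binary string). For each covering step $G_i \lessdot G_{i+1}$, Theorem~\ref{thm:MainThm} yields
\[
\vol \F_1(G_{i+1}) \leq \vol \F_1(G_i).
\]
Chaining these $k$ inequalities gives $\vol \F_1(G') \leq \vol \F_1(G)$, which is precisely the statement that $\vol$ is order-reversing.

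There is essentially no obstacle here once Theorem~\ref{thm:MainThm} and the poset identification of Corollary~\ref{cor:3poset} are in hand: the only subtlety is to check that every cover relation in the Boolean order is in fact an interchange of a \emph{nested} pair (not an arbitrary edge swap), and this was already recorded in Lemma~\ref{lemma:0interchangeable} and Lemma~\ref{lemma:interchange=0to1}. The entire proof therefore reduces to one or two sentences invoking the chain argument above.
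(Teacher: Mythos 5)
Your proof is correct and is precisely the argument the paper leaves implicit (the corollary is stated without a proof block, since it is meant to follow immediately from Theorem~\ref{thm:MainThm}). You correctly identify the one point that needs checking — that Boolean cover relations are realized by interchanges of \emph{nested} pairs, which is exactly what Lemma~\ref{lemma:0interchangeable} and Lemma~\ref{lemma:interchange=0to1} supply — and the chain argument along a saturated chain then gives the order-reversing property by transitivity.
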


\section{Linear Extensions} \label{sec:LinearExtensions}

In this section we establish a connection between flow polytope volumes for DAGs in $\mathscr{F}_{n,3}$ and volumes of a related family of order polytopes; order polytopes are lattice polytopes defined by finite posets~\cite{stanley-posets}.
M\'{e}sz\'{a}ros, Morales, and Striker~\cite{meszaros-morales-striker} established a relationship between flow polytopes and order polytopes, and we use their equivalences to translate our results from Section~\ref{sec:volume} about flow polytopes to results about order polytopes. 
In particular, in Corollary~\ref{cor:MainThmLinearExt} we restate Theorem~\ref{thm:MainThm} as a statement about posets and their linear extensions. 
We briefly review the background and relevant results of~\cite{meszaros-morales-striker}.

\begin{definition}\label{def:orderpolytope}
Given a finite poset $P$, define the \emph{order polytope} to be 
\[
\mathcal{O}(P):= \{x\in [0,1]^{|P|}:x_i\leq x_j \text{ if }i\leq_P j\} \, .
\]
\end{definition}

\begin{remark} Note that the definitions of strongly planar given below are more restrictive than the definitions given in the original M\'{e}sz\'{a}ros, Morales, and Striker paper~\cite{meszaros-morales-striker}, as the original definitions are not sufficient to allow Theorem~\ref{thm:flowandorderequivalence} to hold in general.
  However, Definitions~\ref{def:stronglyplanardag} and~\ref{def:stronglyplanarposet} are sufficient both to establish Theorem~\ref{thm:flowandorderequivalence} and to encompass all DAGs of interest.
  \end{remark}

\begin{definition} \label{def:stronglyplanardag}
A DAG $G$ is \emph{strongly planar} if $G$ is a planar graph with a planar realization such that the following two conditions hold. First, for every directed edge $(i,j)$ of $G$, the $x$-coordinate of $i$ is strictly less than the $x$-coordinate of $j$.
Second, each edge $(i,j)$ is embedded in the plane as the graph of a piecewise differentiable function of $x$.
\end{definition}

\begin{definition} \label{def:stronglyplanarposet}
A poset $P$ is \emph{strongly planar} if the Hasse diagram of $P$ is a planar graph with a planar realization such that the following two conditions hold. First, for every directed edge $(i,j)$ in the Hasse diagram, the $y$-coordinate of $i$ is strictly less than the $y$-coordinate of $j$.
Second, each edge $(i,j)$ is embedded in the plane as the graph of a piecewise differentiable function of $y$.
\end{definition}

The following definition was originally given by M\'esz\'aros, Morales, and Striker~\cite[Section 3.3]{meszaros-morales-striker}.

\begin{definition}
    \label{def:dualgraph}
    Given a strongly planar DAG $G$, we define the \emph{truncated dual graph} $G^*$ to be a strongly planar realization of the dual graph of $G$ with the vertex corresponding to the infinite region deleted.
    The graph $G^*$ is thus the Hasse diagram of a strongly planar poset that we denote by $P_G$.
    Similarly, given a strongly planar poset $P$, let $H$ be the Hasse diagram of $P\cup \{\hat{0},\hat{1}\}$.
    We define $G_P$ to be the strongly planar DAG obtained as the strongly planar dual of $H$.
\end{definition}

\begin{example} \label{ex:n=4Duals}
	Refer to Figure~\ref{fig:n=4Duals} for the truncated duals of the DAGs in $\mathscr{F}_{4,3}$.
\end{example}

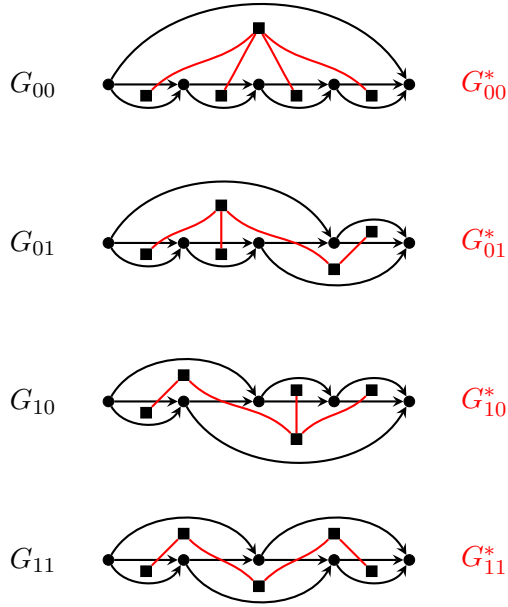
\begin{figure}
\begin{center}
\begin{tikzpicture}

\begin{scope}[scale=1, xshift=0, yshift=180]

	\vertex[fill](a1) at (1,0) {};
	\vertex[fill](a2) at (2,0) {};
	\vertex[fill](a3) at (3,0) {};
	\vertex[fill](a4) at (4,0) {};
	\vertex[fill](a5) at (5,0) {};
	\draw[-stealth, thick] (a1) to[out=0,in=180] (a2);
        \draw[-stealth, thick] (a1) to[out=-60,in=-120] (a2);
        \draw[-stealth, thick] (a1) to[out=60,in=120] (a5);
        \draw[-stealth, thick] (a2) to[out=0,in=180] (a3);
	\draw[-stealth, thick] (a2) to[out=-60,in=-120] (a3);
        \draw[-stealth, thick] (a3) to[out=0,in=180] (a4);
	\draw[-stealth, thick] (a3) to[out=-60,in=-120] (a4);
        \draw[-stealth, thick] (a4) to[out=0,in=180] (a5);
	\draw[-stealth, thick] (a4) to[out=-60,in=-120] (a5);
        \node[] at (0,0) {$G_{00}$};
        \vertex[rectangle, fill](b1) at (1.5,-0.15) {};
        \vertex[rectangle, fill](b2) at (2.5,-0.15) {};
        \vertex[rectangle, fill](b3) at (3.5,-0.15) {};
        \vertex[rectangle, fill](b4) at (4.5,-0.15) {};
        \vertex[rectangle, fill](b5) at (3,0.75) {};
        \draw[red, thick] (b1) to[out=45,in=-135] (b5);
        \draw[red, thick] (b2) to[out=60,in=-120] (b5);
        \draw[red, thick] (b3) to[out=120,in=-60] (b5);
        \draw[red, thick] (b4) to[out=135,in=-45] (b5);
        \node[] at (6,0) {\textcolor{red}{$G_{00}^*$}};
	
\end{scope}

\begin{scope}[scale=1, xshift=0, yshift=120]

	\vertex[fill](a1) at (1,0) {};
	\vertex[fill](a2) at (2,0) {};
	\vertex[fill](a3) at (3,0) {};
	\vertex[fill](a4) at (4,0) {};
	\vertex[fill](a5) at (5,0) {};
	\draw[-stealth, thick] (a1) to[out=0,in=180] (a2);
        \draw[-stealth, thick] (a1) to[out=-60,in=-120] (a2);
        \draw[-stealth, thick] (a1) to[out=60,in=120] (a4);
        \draw[-stealth, thick] (a2) to[out=0,in=180] (a3);
	\draw[-stealth, thick] (a2) to[out=-60,in=-120] (a3);
        \draw[-stealth, thick] (a3) to[out=0,in=180] (a4);
	\draw[-stealth, thick] (a3) to[out=-60,in=-120] (a5);
        \draw[-stealth, thick] (a4) to[out=0,in=180] (a5);
	\draw[-stealth, thick] (a4) to[out=60,in=120] (a5);
        \node[] at (0,0) {$G_{01}$};
        \vertex[rectangle, fill](b1) at (1.5,-0.15) {};
        \vertex[rectangle, fill](b2) at (2.5,-0.15) {};
        \vertex[rectangle, fill](b3) at (4,-0.35) {};
        \vertex[rectangle, fill](b4) at (4.5,0.15) {};
        \vertex[rectangle, fill](b5) at (2.5,0.5) {};
        \draw[red, thick] (b1) to[out=45,in=-135] (b5);
        \draw[red, thick] (b2) to[out=90,in=-90] (b5);
        \draw[red, thick] (b3) to[out=135,in=-45] (b5);
        \draw[red, thick] (b3) to[out=45,in=-135] (b4);
        \node[] at (6,0) {\textcolor{red}{$G_{01}^*$}};
	
\end{scope}

\begin{scope}[scale=1, xshift=0, yshift=60]

	\vertex[fill](a1) at (1,0) {};
	\vertex[fill](a2) at (2,0) {};
	\vertex[fill](a3) at (3,0) {};
	\vertex[fill](a4) at (4,0) {};
	\vertex[fill](a5) at (5,0) {};
	\draw[-stealth, thick] (a1) to[out=0,in=180] (a2);
        \draw[-stealth, thick] (a1) to[out=-60,in=-120] (a2);
        \draw[-stealth, thick] (a1) to[out=60,in=120] (a3);
        \draw[-stealth, thick] (a2) to[out=0,in=180] (a3);
	\draw[-stealth, thick] (a2) to[out=-60,in=-120] (a5);
        \draw[-stealth, thick] (a3) to[out=0,in=180] (a4);
	\draw[-stealth, thick] (a3) to[out=60,in=120] (a4);
        \draw[-stealth, thick] (a4) to[out=0,in=180] (a5);
	\draw[-stealth, thick] (a4) to[out=60,in=120] (a5);
        \node[] at (0,0) {$G_{10}$};
        \vertex[rectangle, fill](b1) at (1.5,-0.15) {};
        \vertex[rectangle, fill](b2) at (2,0.35) {};
        \vertex[rectangle, fill](b3) at (3.5,-0.5) {};
        \vertex[rectangle, fill](b4) at (3.5,0.15) {};
        \vertex[rectangle, fill](b5) at (4.5,0.15) {};
        \draw[red, thick] (b1) to[out=45,in=-135] (b2);
        \draw[red, thick] (b2) to[out=-45,in=135] (b3);
        \draw[red, thick] (b3) to[out=90,in=-90] (b4);
        \draw[red, thick] (b3) to[out=45,in=-135] (b5);
        \node[] at (6,0) {\textcolor{red}{$G_{10}^*$}};
 
\end{scope}

\begin{scope}[scale=1, xshift=0, yshift=0]

	\vertex[fill](a1) at (1,0) {};
	\vertex[fill](a2) at (2,0) {};
	\vertex[fill](a3) at (3,0) {};
	\vertex[fill](a4) at (4,0) {};
	\vertex[fill](a5) at (5,0) {};
	\draw[-stealth, thick] (a1) to[out=0,in=180] (a2);
        \draw[-stealth, thick] (a1) to[out=-60,in=-120] (a2);
        \draw[-stealth, thick] (a1) to[out=60,in=120] (a3);
        \draw[-stealth, thick] (a2) to[out=0,in=180] (a3);
	\draw[-stealth, thick] (a2) to[out=-60,in=-120] (a4);
        \draw[-stealth, thick] (a3) to[out=0,in=180] (a4);
	\draw[-stealth, thick] (a3) to[out=60,in=120] (a5);
        \draw[-stealth, thick] (a4) to[out=0,in=180] (a5);
	\draw[-stealth, thick] (a4) to[out=-60,in=-120] (a5);
        \node[] at (0,0) {$G_{11}$};
        \vertex[rectangle, fill](b1) at (1.5,-0.15) {};
        \vertex[rectangle, fill](b2) at (2,0.35) {};
        \vertex[rectangle, fill](b3) at (3,-0.35) {};
        \vertex[rectangle, fill](b4) at (4,0.35) {};
        \vertex[rectangle, fill](b5) at (4.5,-0.15) {};
        \draw[red, thick] (b1) to[out=45,in=-135] (b2);
        \draw[red, thick] (b2) to[out=-45,in=135] (b3);
        \draw[red, thick] (b3) to[out=45,in=-135] (b4);
        \draw[red, thick] (b4) to[out=-45,in=135] (b5);
        \node[] at (6,0) {\textcolor{red}{$G_{11}^*$}};
	
\end{scope}

\end{tikzpicture}
\end{center}
\caption{Truncated duals $G^*$ for $G\in\mathscr{F}_{4,3}$.}
\label{fig:n=4Duals}
\end{figure}

M\'esz\'aros, Morales, and Striker~\cite{meszaros-morales-striker} proved the following beautiful equivalence between flow and order polytopes in the strongly planar case.

\begin{theorem}[M\'esz\'aros, Morales, and Striker~\cite{meszaros-morales-striker}]\label{thm:flowandorderequivalence}
	Given a strongly planar DAG $G$, the flow polytope $\mathcal{F}_1(G)$ is integrally equivalent to the order polytope of the strongly planar poset dual to $G$.
	Conversely, given a strongly planar poset $P$, the order polytope $\mathcal{O}(P)$ is integrally equivalent to the flow polytope of the strongly planar DAG $G_P$.
\end{theorem}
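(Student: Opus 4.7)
The plan is to construct an explicit integral equivalence between $\mathcal{F}_1(G)$ and $\mathcal{O}(P_G)$ via a \emph{dual potential} assigned to the faces of a strongly planar embedding of $G$. I will fix such an embedding and note that (by Definition~\ref{def:dualgraph}) the vertices of $P_G$ correspond bijectively to the bounded faces of this embedding, while the two unbounded regions naturally split into a ``top'' region $F_\top$ and a ``bottom'' region $F_\bot$ thanks to the $x$-monotonicity of edges. I will declare $F_\bot$ to have potential $0$ and $F_\top$ to have potential $1$; these will play the role of $\hat 0$ and $\hat 1$ in the construction of $P_G \cup \{\hat 0, \hat 1\}$.

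First I would define a forward map $\Phi:\mathcal{F}_1(G)\to\mathcal{O}(P_G)$ by setting $\Phi(f)(F)$ equal to the signed sum of $f(e)$ over edges $e$ crossed by any dual path from $F_\bot$ to $F$, with the sign convention that $e$ contributes $+f(e)$ when the dual path crosses $e$ from the face on the ``below'' side (relative to the orientation of $e$ in the planar embedding) to the face on the ``above'' side. Well-definedness is the heart of the argument and I will verify it by showing that the signed sum around the boundary of every bounded face is $0$: such a boundary consists of the in-edges and out-edges at some vertex $v$ of $G$, and the alternating-sign contribution is exactly the flow conservation identity at $v$. The boundary potential jump of $1$ between $F_\bot$ and $F_\top$ corresponds to the net flow of $1$ from source to sink. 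The inequalities $f(e)\geq 0$ translate directly into the face inequalities $\Phi(f)(F_r(e))\geq \Phi(f)(F_\ell(e))$, which by construction of $P_G$ match the defining relations of $\mathcal{O}(P_G)$.

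Next I would define the inverse map $\Psi:\mathcal{O}(P_G)\to \mathcal{F}_1(G)$ by $\Psi(\pi)(e):=\pi(F_r(e))-\pi(F_\ell(e))$, extending $\pi$ by $\pi(F_\bot)=0$ and $\pi(F_\top)=1$ on the outer faces. Nonnegativity is automatic from the order polytope inequalities. To verify the netflow condition $M_G \Psi(\pi) = (1,0,\ldots,0,-1)$, I would compute the local balance at each vertex $v$ of $G$: the incident edges partition the neighborhood of $v$ into angular sectors, each sector corresponding to one face, and the sum $\sum_{e\text{ out}}\Psi(\pi)(e)-\sum_{e\text{ in}}\Psi(\pi)(e)$ telescopes around $v$ to $\pi(F_\text{east})-\pi(F_\text{west})$, which vanishes at interior vertices and equals $\pm 1$ at source/sink, since the source is the only vertex at which the telescoping sweep passes through $F_\bot$ and $F_\top$. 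Checking $\Phi\circ\Psi=\mathrm{id}$ and $\Psi\circ\Phi=\mathrm{id}$ is then immediate from the definitions, and since both maps are given by integer matrices with $\pm 1$ entries, they furnish an integral equivalence.

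The main obstacle will be the well-definedness of $\Phi$, which is a discrete Stokes-type statement requiring careful bookkeeping of the signs induced by the planar embedding. The strong planarity hypothesis is essential here: the $x$-monotonicity of every edge of $G$ (and the piecewise differentiability) guarantees that each interior vertex has a well-defined left/right orientation of its incident faces, so that the cyclic order of in-edges and out-edges around $v$ correctly induces the covering relations in $P_G$ and so that the telescoping argument in the previous paragraph has the correct signs. The converse direction, starting from a strongly planar poset $P$ and producing $G_P$, is a symmetric argument using the analogous $y$-monotone embedding of the Hasse diagram, and the same potential/difference construction yields the equivalence $\mathcal{O}(P)\cong \mathcal{F}_1(G_P)$.
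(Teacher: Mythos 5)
Your dual-potential (height-function) construction is precisely the argument that Mészáros, Morales, and Striker give for this theorem; the paper above simply cites their result rather than reproducing a proof. The reasoning is sound, though you should (a) be careful that the $F_\ell/F_r$ convention in your final inequality $\Phi(f)(F_r(e))\geq\Phi(f)(F_\ell(e))$ actually agrees with the ``below-to-above gives $+f(e)$'' rule you fix at the start (with the usual convention that, walking along a west-to-east edge, the right-hand face is the one below, these come out with opposite signs, so one of the two must be flipped), and (b) say explicitly that the cycles whose vanishing gives well-definedness of $\Phi$ are the boundaries of the \emph{bounded faces of the dual graph} $G^*$, which correspond to vertices of $G$ and whose boundary edges are the dual edges crossing the star of a vertex — this is what makes the Stokes-type check literally the flow-conservation identity at that vertex.
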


As a corollary, we see that the volume of a flow polytope for a strongly planar DAG is equal to the volume of the order polytope for the strongly planar dual, and vice versa.
Further, we have the following result due to Stanley~\cite{stanley-posets}.

\begin{theorem}[Stanley~\cite{stanley-posets}]\label{thm:VolumeCountsLinearExt}
	For a poset $P$, the number of linear extensions of $P$ is counted by the volume of the order polytope $\mathcal{O}(P)$. That is
	\[
		e(P) = \vol \mathcal{O}(P) . 
	\]
\end{theorem}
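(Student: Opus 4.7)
The plan is to prove this via Stanley's canonical triangulation of the order polytope into unimodular simplices indexed by linear extensions of $P$. Write $P = \{p_1, \ldots, p_n\}$ so that $\mathcal{O}(P) \subseteq [0,1]^n$, and for each linear extension $\sigma: [n] \to P$ define the simplex
\[
\Delta_\sigma := \{x \in [0,1]^n : 0 \leq x_{\sigma(1)} \leq x_{\sigma(2)} \leq \cdots \leq x_{\sigma(n)} \leq 1\}.
\]
The first step is to verify that $\Delta_\sigma \subseteq \mathcal{O}(P)$: whenever $p_i \leq_P p_j$, the fact that $\sigma$ is a linear extension implies $\sigma^{-1}(i) \leq \sigma^{-1}(j)$, so by the chain of inequalities defining $\Delta_\sigma$ we obtain $x_i \leq x_j$, which is precisely the defining inequality of $\mathcal{O}(P)$.

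The second step is to show $\mathcal{O}(P) = \bigcup_\sigma \Delta_\sigma$ with pairwise intersections of measure zero. Given any $x \in \mathcal{O}(P)$, choose a total ordering of the indices that refines both the partial order on $P$ and the ordering of the coordinates of $x$ (breaking ties consistently with $P$); this refinement is exactly a linear extension $\sigma$ for which $x \in \Delta_\sigma$. For the measure-zero overlap, if $\sigma \neq \tau$ then $\Delta_\sigma \cap \Delta_\tau$ is contained in some hyperplane $\{x_i = x_j\}$, hence has Lebesgue measure zero in $\mathbb{R}^n$.

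The third step is to compute the normalized volume of each $\Delta_\sigma$. The simplex $\Delta_\sigma$ is lattice-equivalent via the coordinate permutation $\sigma$ to the standard order simplex $\{0 \leq y_1 \leq \cdots \leq y_n \leq 1\}$, whose vertices are $0, e_n, e_n + e_{n-1}, \ldots, e_n + \cdots + e_1$. The edge vectors emanating from the origin are the partial sums $e_n, e_n + e_{n-1}, \ldots$, and the determinant of the matrix formed by these vectors is $\pm 1$; equivalently, the standard order simplex has Euclidean volume $1/n!$, so its normalized volume equals $1$.

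Combining the three steps, $\vol \mathcal{O}(P) = \sum_\sigma \vol \Delta_\sigma = e(P)$ in normalized volume, where the sum ranges over linear extensions. The main obstacle is the covering step: one must carefully handle ties among coordinates $x_i = x_j$ to ensure every point of $\mathcal{O}(P)$ lies in at least one $\Delta_\sigma$, but the ability to refine the coordinate order to a total order compatible with $P$ makes this routine.
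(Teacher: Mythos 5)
The paper does not prove this theorem; it cites it directly from Stanley's ``Two poset polytopes,'' so there is no in-paper argument to compare against. Your proof is correct and is, in substance, Stanley's original argument: decompose $\mathcal{O}(P)$ into the order simplices $\Delta_\sigma$ indexed by linear extensions, check that they cover (by refining the coordinate preorder of a given point to a total order compatible with $P$, which is always possible because any would-be cycle would force equality of all $x$-values involved and then contradict antisymmetry of $\leq_P$), check that distinct pieces meet only in a hyperplane $\{x_i = x_j\}$, and observe that each piece is a coordinate permutation of the standard order simplex of Euclidean volume $1/n!$, hence normalized volume $1$. One tiny notational slip: you write $\sigma^{-1}(i)\leq\sigma^{-1}(j)$ where you mean $\sigma^{-1}(p_i)\leq\sigma^{-1}(p_j)$ (or, equivalently, identify $P$ with $[n]$ throughout), but this does not affect the argument.
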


We show below that DAGs in $\mathscr{F}_{n,3}$ are strongly planar, so we can apply the correspondence given by Theorem~\ref{thm:flowandorderequivalence} and Theorem~\ref{thm:VolumeCountsLinearExt} to our volume results from Section~\ref{sec:volume}. 
Our flow polytope volume inequalities from Theorem~\ref{thm:MainThm} thus extend to linear extension inequalities for the posets arising as truncated duals.
We begin by proving an observation about drawing DAGs in $\mathscr{F}_{n,3}$ with non-planar embeddings.

\begin{proposition} \label{prop:1cross}
    Let $G\in\mathscr{F}_{n,3}$. 
    If we draw the vertices in a row in so that their labels increase from left to right and draw non-spine edges as upward-facing semicircles (directed from smaller vertex labels to higher), $G$ has an edge crossing between vertices $k+1$ and $k+2$ if and only if its corresponding binary string $\b=b_1\cdots b_{n-2}$ has a $1$ in position $k$.
\end{proposition}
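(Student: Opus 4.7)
The plan is to use the explicit edge-set description from Theorem~\ref{thm:bijection} together with the uniqueness result of Lemma~\ref{lemma:UniqueOverpass} to identify precisely which non-spine semicircles lie above the interval $(k+1,k+2)$. The key geometric fact is that two upward semicircles over intervals $[a_1,d_1]$ and $[a_2,d_2]$ cross transversely at some $x$-coordinate in $(k+1,k+2)$ exactly when both intervals contain $[k+1,k+2]$ and the pairs of endpoints strictly interleave; nested pairs or pairs sharing an endpoint do not cross. Since the spine edges are drawn as straight arrows at height zero, only non-spine semicircles can participate in crossings, and so I reduce the proof to listing the non-spine edges whose intervals span $[k+1,k+2]$.

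For the $(\Leftarrow)$ direction, suppose $b_k=1$ and write $k=j_l$ in the notation of Theorem~\ref{thm:bijection}. Then $E(G_\b)$ contains the two $1$-type edges $(j_{l-1}+1,\,k+2)$ and $(k+1,\,j_{l+1}+2)$, where the edge-case conventions $j_0=0$ and $j_{r+1}=n-1$ yield $(1,k+2)$ when $l=1$ and $(k+1,n+1)$ when $l=r$. Their four endpoints satisfy $j_{l-1}+1<k+1<k+2<j_{l+1}+2$, so the endpoint pairs strictly interleave while both intervals contain $[k+1,k+2]$, producing the desired crossing between vertices $k+1$ and $k+2$.

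For the $(\Rightarrow)$ direction I prove the contrapositive: if $b_k=0$, then no crossing occurs between vertices $k+1$ and $k+2$. Lemma~\ref{lemma:0interchangeable} supplies the short $0$-type semicircle $(k+1,k+2)$ together with a unique larger edge of the form $(j_l+1,j_{l+1}+2)$ that strictly nests it. I will verify these are the only non-spine semicircles whose intervals contain $[k+1,k+2]$: any candidate $(a,d)$ with $a\le k+1$ and $d\ge k+2$ falls into one of (i) $a<k+1$ and $d>k+2$, which by Lemma~\ref{lemma:UniqueOverpass} applied at $k+1$ coincides with the nesting edge above; (ii) $a=k+1$ and $d>k+2$, a $1$-type edge starting at $k+1$; or (iii) $a<k+1$ and $d=k+2$, a $1$-type edge ending at $k+2$. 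The explicit construction in Theorem~\ref{thm:bijection} produces edges of types (ii) and (iii) only when $k\in J_\b$, and since $b_k=0$ means $k\notin J_\b$, neither occurs. The two remaining candidates are strictly nested, so they do not cross. The main obstacle lies exactly in this bookkeeping: ruling out every possible second semicircle above $(k+1,k+2)$ when $b_k=0$ requires combining overpass uniqueness with the precise classification of $1$-type edges in Theorem~\ref{thm:bijection}.
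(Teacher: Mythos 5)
Your proposal is correct and follows essentially the same route as the paper: both use the explicit edge description from Theorem~\ref{thm:bijection} together with Lemma~\ref{lemma:UniqueOverpass} to show that when $b_k=1$ the two $1$-type edges $(j_{l-1}+1,k+2)$ and $(k+1,j_{l+1}+2)$ interleave over $[k+1,k+2]$, and when $b_k=0$ the only non-spine edges over that interval are $(k+1,k+2)$ and its unique nesting overpass. One small caution: your stated ``key geometric fact'' is false in general in its $\Leftarrow$ direction --- two interleaving upper semicircles over $[a_1,d_1]$ and $[a_2,d_2]$ with $a_1<a_2<d_1<d_2$ cross at a unique $x$-coordinate somewhere in $(a_2,d_1)$, which need not lie in $(k+1,k+2)$ merely because both intervals contain $[k+1,k+2]$ --- but your application is sound because the specific $1$-type edges you exhibit satisfy $a_2=k+1$ and $d_1=k+2$ exactly, forcing the crossing into that interval.
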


\begin{proof}
	Suppose $b_k = 0$. 
	Then by the bijection $\lambda$ in Theorem~\ref{thm:bijection}, the DAG $G$ has edges $e_{k+1}$ and $(k+1,k+2)$. 
	Further, Lemma~\ref{lemma:UniqueOverpass} ensures that only one other edge can pass over vertices $k+1$ and $k+2$. 
	But since the consecutive edges $e_{k+1}$ and $(k+1,k+2)$ fill all out-going stubs of $k+1$ and all in-going stubs of $k+2$, the unique edges passing over $k+1$ and $k+2$ are one in the same. 
	Drawing these three edges as upper semicircular arcs does not create any crossings. 
	
\begin{center}
\begin{tikzpicture}
\begin{scope}[xshift=0, yshift=0, scale=1]

	\node(a1) at (0.5,0.3) {};
	\vertex[fill,label=below:\footnotesize{$k+1$}](a2) at (2,0) {};
	\vertex[fill,label=below:\footnotesize{$k+2$}](a3) at (3.5,0) {};
	\node(a4) at (5,0.3) {};
        \draw[-stealth, thick] (a2) to[out=0,in=180] (a3);
        \draw[-stealth, thick] (a2) to[out=60,in=120] (a3);
        \draw[-stealth, thick] (a1) to[out=35,in=145] (a4);

\end{scope}
\end{tikzpicture}
\end{center}
	
	Conversely, suppose $b_k = 1$. 
	Denote the indices of the adjacent $1$s of $\b$ with $j$ and $l$, where $j:=0$ if $k$ is the first $1$ in $\b$ and $l:=n-1$ if $k$ is the last $1$ in $\b$. 
	So we have $j<k<l$. 
	Then the bijection $\lambda$ in Theorem~\ref{thm:bijection} gives edges $(j+1,k+2)$, $(k+1,l+2)$. 
	Note that $j<k<l$ ensures that neither $(j+1,k+2)$ nor $(k+1,l+2)$ can be a consecutive edge. 
	These two edges cross between $k+1$ and $k+2$ if drawn as upper semicircular arcs.

\begin{center}
\begin{tikzpicture}
\begin{scope}[xshift=0, yshift=0, scale=1]

	\vertex[fill,label=below:\footnotesize{$j+1$}](a1) at (0,0) {};
	\vertex[fill,label=below:\footnotesize{$k+1$}](a2) at (2,0) {};
	\vertex[fill,label=below:\footnotesize{$k+2$}](a3) at (3.5,0) {};
	\vertex[fill,label=below:\footnotesize{$l+2$}](a4) at (5.5,0) {};
        \draw[-stealth, thick] (a1) to[out=60,in=120] (a3);
        \draw[-stealth, thick] (a2) to[out=60,in=120] (a4);

\end{scope}
\end{tikzpicture}
\end{center}
\end{proof}

\begin{proposition} \label{prop:planar}
    Let $G\in\mathscr{F}_{n,3}$. 
    Then $G$ is strongly planar.
\end{proposition}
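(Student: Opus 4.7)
The plan is to construct an explicit planar realization of $G_\b \in \mathscr{F}_{n,3}$ satisfying the two conditions of Definition~\ref{def:stronglyplanardag}. Place each vertex $i$ at the point $(i,0)$ in the plane, so the $x$-coordinate condition is automatic, and draw each spine edge $e_i = (i,i+1)$ along the $x$-axis. The doubled edges at the source and sink forced by $\outd_1 = \ind_{n+1} = 3$---namely the extra copies of $(1,2)$ and $(n,n+1)$---are drawn as small semicircular arcs above the axis. For each index $l$ with $b_l = 0$, the additional copy of the consecutive edge $(l+1,l+2)$ will be drawn as a small arc whose side (above or below the axis) is chosen compatibly with the remaining edges.

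All remaining edges are the $1$-type edges from the proof of Theorem~\ref{thm:bijection}: letting $J_\b = \{j_1 < \cdots < j_r\}$ and setting $j_0 := 0$, $j_{r+1} := n-1$, they form a chain
\[
E_i := (j_{i-1}+1,\, j_i+2), \qquad i = 1, 2, \ldots, r+1.
\]
The key claim to establish is that for $i < k$, the edges $E_i$ and $E_k$ form a crossed pair (Definition~\ref{def:crossed}) if and only if $k = i+1$. The ``if'' direction follows from the strict inequalities $j_{i-1}+1 < j_i+1 < j_i+2 < j_{i+1}+2$. For the ``only if'' direction, when $k \geq i+2$ one has $j_{k-1}+1 \geq j_{i+1}+1 \geq j_i + 2$, so the $x$-range of $E_k$ begins at or after the right endpoint of $E_i$; hence $E_i$ and $E_k$ are neither crossed nor nested and cannot meet when drawn as semicircular arcs.

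Given this claim, draw each $E_i$ as an arc above the $x$-axis for odd $i$ and below it for even $i$. Because the only possible crossings among the $1$-type edges occur between consecutive ones in the chain, and such pairs now lie on opposite sides of the axis, no two $1$-type arcs intersect. To accommodate the short consecutive arcs (those forced by $b_l = 0$ and by the doubled source/sink edges) together with the $1$-type arcs, I would choose arc heights proportional to $x$-span, so that any short arc nested within the $x$-range of a $1$-type arc lies strictly below it on the same side and short arcs whose $x$-ranges are disjoint from the $1$-type arc stay out of its way. Each edge is then clearly the graph of a piecewise differentiable function of $x$, and the embedding is planar.

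The main obstacle is not combinatorial but bookkeeping: verifying that the chosen heights really do avoid all incidental intersections, particularly when multiple short consecutive arcs are nested within the same $1$-type arc. Once the consecutive-crossing claim for the $E_i$'s is in hand, the alternating above/below assignment gives planarity essentially for free, and the remaining verification is routine.
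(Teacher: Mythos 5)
Your approach is correct but takes a genuinely different route from the paper. The paper does not invoke the binary-string bijection here at all: it decomposes the non-spine edges into two vertex-disjoint monotone paths $A$ and $B$ from $1$ to $n+1$, using the fact that every interior vertex has exactly one non-spine in-edge and one non-spine out-edge. With the spine on the $x$-axis, $A$ reflected below and $B$ drawn above, each path is monotone in $x$ and so cannot self-intersect, and the two paths meet only at $1$ and $n+1$; planarity follows in a few lines with no case analysis. You instead place the $1$-type chain $E_1,\ldots,E_{r+1}$ from Theorem~\ref{thm:bijection} directly, alternating sides by parity, justified by your (correct) observation that $E_i$ and $E_k$ cross exactly when $k=i+1$. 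The bookkeeping you defer is in fact easy to close: any edge of span one (a $0$-type or doubled consecutive edge) can never be part of a crossed pair $(a,c),(b,d)$ with $a<b<c<d$, since that would force an integer strictly between consecutive integers, and two semicircular arcs on the same side of the axis that are nested or have disjoint $x$-ranges meet only at shared vertices on the axis. So with all non-spine edges drawn as semicircles, your alternating assignment of the $E_i$ already suffices and no height tuning is needed. That said, the paper's two-path decomposition handles every non-spine edge uniformly and avoids case-checking, which makes it the cleaner argument; your version is the natural one if you want an embedding tied explicitly to the binary string, since it makes the crossing between $E_i$ and $E_{i+1}$ the visible ``$1$'' of $\b$, matching Proposition~\ref{prop:1cross}.
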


\begin{proof}
    Let $G\in\mathscr{F}_{n,3}$. 
    Then $G$ can be drawn with the vertices in a horizontal row with increasing labels,
    and the edges so that each spine edge $e_i$ is realized as a horizontal line from $i$ to $i+1$ and the non-spine edges are realized as upward-facing semi-circular arcs. 
    This drawing convention ensures that every directed edge $(i,j)$ of $G$ has the $x$-coordinate of $i$ strictly less than the $x$-coordinate of $j$,
    and that each edge is embedded in the plane as the graph of a piecewise differentiable function of $x$.
    Now we seek to transform some of the edges in a way that preserves these properties, but gives a planar embedding of $G$, in order to satisfy Definition~\ref{def:stronglyplanardag}.
    
    We claim we can decompose $E(G)$ into three paths from vertex $1$ to vertex $n+1$. 
    This is useful because a path in DAG embedded so that every edge $(i,j)$ has the $x$-coordinate of $i$ strictly less than the $x$-coordinate of $j$ cannot intersect itself. 
    The first path is the spine: $(e_1,\ldots, e_n)$. 
    The next path begins with the edge $(1,2)$, and the final path begins with the other edge leaving vertex $1$, $(1,j)$ say. 
    For convenience, we label the non-spine path beginning with $(1,2)$ as $A$ and the non-spine path beginning with $(1,2)$ as $B$. 
        To prove that paths $A$ and $B$ do indeed end at vertex $n+1$, consider the subgraph of $G$ obtained by deleting the spine. 
    Every interior vertex in the subgraph has in- and out-degree $1$. 
    Hence, each interior vertex $i$ can be in at most one path, and the path can continue past vertex $i$. 
        
    Now, if we reflect each edge in path $A$ below the spine, path $A$ does not intersect the spine or path $B$, since path $B$ lies entirely above the spine. 
    For the same reason, path $B$ does not intersect the spine.
    Hence we have a planar embedding of $G$. 
    Further, the reflection of the edges in path $A$ preserves the properties necessary for this embedding to be strongly planar. 
\end{proof}

\begin{example} \label{ex:planar1}
    Figure~\ref{fig:n=4Duals} provides a strongly planar embedding for each $G\in\mathscr{F}_{4,3}$.
\end{example}

\begin{example} \label{ex:planar2}
    Figure~\ref{fig:planar} provides a strongly planar embedding of $G_{1011}$.
\end{example}

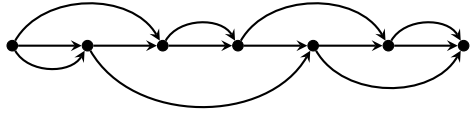
\begin{figure}
\begin{center}
\begin{tikzpicture}

    \begin{scope}[xshift=0, yshift=0, scale=1]
    
    	\vertex[fill](a1) at (1,0) {};
    	\vertex[fill](a2) at (2,0) {};
    	\vertex[fill](a3) at (3,0) {};
    	\vertex[fill](a4) at (4,0) {};
    	\vertex[fill](a5) at (5,0) {};
            \vertex[fill](a6) at (6,0) {};
            \vertex[fill](a7) at (7,0) {};
    	\draw[-stealth, thick] (a1) to[out=0,in=180] (a2);
            \draw[-stealth, thick] (a1) to[out=-60,in=-120] (a2);
            \draw[-stealth, thick] (a1) to[out=60,in=120] (a3);
            \draw[-stealth, thick] (a2) to[out=0,in=180] (a3);
    	\draw[-stealth, thick] (a2) to[out=-60,in=-120] (a5);
            \draw[-stealth, thick] (a3) to[out=0,in=180] (a4);
    	\draw[-stealth, thick] (a3) to[out=60,in=120] (a4);
            \draw[-stealth, thick] (a4) to[out=0,in=180] (a5);
    	\draw[-stealth, thick] (a4) to[out=60,in=120] (a6);
    	\draw[-stealth, thick] (a5) to[out=0,in=180] (a6);
    	\draw[-stealth, thick] (a5) to[out=-60,in=-120] (a7);
            \draw[-stealth, thick] (a6) to[out=0,in=180] (a7);
    	\draw[-stealth, thick] (a6) to[out=60,in=120] (a7);
    \end{scope}

\end{tikzpicture}
\end{center}
\caption{A strongly planar embedding for $G_{1011}$.}
\label{fig:planar}
\end{figure}

Our next goal is to identify the posets that are realizable as truncated duals of DAGs in $\mathscr{F}_{n,3}$.

\begin{proposition} \label{prop:PropertiesOfDual}
    Let $G\in\mathscr{F}_{n,3}$.
    Then the truncated dual $G^*$ 
    is a bipartite non-crossing tree on $n+1$ vertices. 
\end{proposition}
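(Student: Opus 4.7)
The plan is to analyze the planar embedding of $G$ from Proposition~\ref{prop:planar} and read off the structure of $G^*$ directly from it.

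First, I would apply Euler's formula to count bounded faces. Since $|V(G)|=n+1$ and $|E(G)|=2n+1$ by Proposition~\ref{prop:edgecount}, the planar embedding of $G$ has $F = 2+E-V = n+2$ faces, of which $n+1$ are bounded. Hence $|V(G^*)| = n+1$. Next I would use the path decomposition implicit in the proof of Proposition~\ref{prop:planar}: the non-spine subgraph of $G$ decomposes into two edge-disjoint directed paths $A$ and $B$ from vertex $1$ to vertex $n+1$, drawn below and above the spine respectively. Because each interior vertex has non-spine in- and out-degree $1$, it lies on exactly one of $A$ or $B$, yielding $V_A \cap V_B = \{1, n+1\}$ and $V_A \cup V_B = [n+1]$. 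The bounded faces of $G$ split accordingly: each edge of $B$ bounds one ``above'' face (together with the spine segment it covers), and each edge of $A$ bounds one ``below'' face. This above/below partition will serve as the bipartition of $G^*$.

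The central step is to identify the edges of $G^*$. Because paths $A$ and $B$ form the outer envelope of the drawing on either side of the spine, every non-spine edge of $G$ has the unbounded face on the side away from the spine and so contributes no edge to $G^*$. Each spine edge $e_i$, by contrast, borders exactly one above face and one below face, so contributes one edge in $G^*$; hence $G^*$ has exactly $n$ edges, all crossing the above/below bipartition. To conclude that $G^*$ is a tree, I would sweep $e_1, e_2, \ldots, e_n$ from left to right: at each transition $e_i \to e_{i+1}$, exactly one of the above or below face changes (since vertex $i+1$ lies on exactly one of $A, B$), so consecutive dual edges share precisely one endpoint. This sweep traces a walk in $G^*$ covering all $n$ dual edges, and since every dual vertex is incident to at least one such edge, the walk visits every vertex, establishing connectedness. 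With $n+1$ vertices, $n$ edges, and connectedness, $G^*$ is a tree; it is bipartite by construction, and non-crossing is automatic from the planar embedding inherited from $G$.

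The main obstacle is rigorously verifying that every non-spine edge borders the unbounded face. This requires care because the extra copies of $(1,2)$ and $(n,n+1)$ together with the $0$-type edges create digons with the corresponding spine edges, so one must confirm that the ``outer'' side of each such arc (relative to the spine) is the single unbounded region rather than some bounded pocket formed elsewhere along the path. This follows from the specific embedding in Proposition~\ref{prop:planar}, in which $A$ is drawn as a function graph strictly below the spine and $B$ strictly above, so the region beyond each of $A, B$ is a single unbounded face. Once this is in place, the counting and sweeping arguments above finish the proof.
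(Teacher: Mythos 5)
Your approach is essentially the same as the paper's: both use the Euler characteristic to get $|V(G^*)| = n+1$, both use the decomposition of $E(G)$ into the spine and the paths $A$ (below) and $B$ (above) from Proposition~\ref{prop:planar} to obtain the bipartition, and both count $|E(G^*)| = n$ by observing that precisely the spine edges lie between two bounded faces. The one place where you go beyond the paper is the sweep argument for connectedness. The paper concludes with ``$G^*$ has $n+1$ vertices and $n$ edges, so it is a tree,'' but a graph with $n+1$ vertices and $n$ edges is a tree only if one also knows it is connected (or acyclic); the paper leaves that step implicit. Your left-to-right sweep along $e_1, \ldots, e_n$, using the fact that each interior vertex lies on exactly one of $A$ or $B$ so that exactly one of the two adjacent dual vertices changes at each step, explicitly produces a walk through all $n+1$ dual vertices, closing the gap. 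The remaining point you flag --- that the non-spine edges all border the unbounded region --- is asserted with the same level of justification in both your argument and the paper's, resting on the function-graph embedding of $A$ below and $B$ above the spine. Overall this is a correct proof along the paper's lines, somewhat more careful at the final step.
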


\begin{proof}
    Let $G\in\mathscr{F}_{n,3}$. 
    Let $\widehat G$ denote the proper dual of $G$, and let $G^*$ denote the truncated dual. 
    Recall that $G^*$ is the subgraph of $\widehat G$ obtained by deleting the vertex of $\hat G$ associated with the infinite face and all its incident edges. 
    By definition, $\widehat G$ is planar, and thus $G^*$ is non-crossing. 
    The planar embedding of $G$ described in Proposition~\ref{prop:planar} partitions the vertices of $G^*$ into those between the spine and path $A$, and those between the spine and path $B$.
    Hence $G^*$ is bipartite. 
    
    We use the Euler characteristic formula to verify the vertex count for $G^*$, and show that it has the requisite number of edges to be a tree. 
    By Proposition~\ref{prop:edgecount}, we know $G$ has $n+1$ vertices and $2n+1$ edges. 
    Then by the Euler characteristic formula, we have that $\widehat G$ has 
    \[ 
    	2-|V(G)|+|E(G)| = 2-(n+1) +(2n+1) = n+2
    \]
    vertices.
    Hence $G^*$ has $n+1$ vertices. 
    To count the edges in $G^*$, note that the number of edges in $G^*$ is equal to the number of edges in $\widehat G$ that are not incident to the vertex associated to the infinite face. 
    Using the planar embedding described in Proposition~\ref{prop:planar}, the edges in $\widehat G$ that are incident to the vertex associated to the infinite face correspond to the edges in $G$ that comprise paths $A$ and $B$.
    Since the spine and paths $A$ and $B$ partition $E(G)$, we have 
    \begin{align*}    
    	|E(G^*)| &= |E(G)| - (\# \text{ edges in path } A) - (\# \text{ edges in path } B) \\
	&= (\# \text{ edges in spine of } G) \\
	&= n .
    \end{align*}
    Thus $G^*$ is a graph with $n+1$ vertices and $n$ edges, so it is a tree.
\end{proof}

	Note that the drawing conventions described in the proof of Proposition~\ref{prop:planar} yield a bipartite non-crossing tree whose distinguished sets are separated by a horizontal line (the spine). 
	When we interpret one of these truncated duals as a Hasse diagram, we get a graded poset with two ranks (rank $0$ elements in the distinguished set below the spine and rank $1$ elements in the distinguished set above the spine).
	With this in mind, we use the terms ``bipartite non-crossing trees'' and ``rank $1$ posets" interchangeably throughout the remainder of the section, though in general not all rank 1 posets are bipartite non-crossing trees.

Next, we characterize the bipartite non-crossing trees on $n+1$ vertices that are realizable as duals of DAGs from $\mathscr{F}_{n,3}$, using the drawing convention described in the proof of Proposition~\ref{prop:planar}.  
We use observations due to M\'esz\'aros and Morales~\cite{meszaros-morales}, in whose work bipartite non-crossing trees arise from a completely different property of DAGs.

\begin{proposition} \label{prop:BNTcount}
    There are $2^{n-2}$ bipartite non-crossing trees on $n$ vertices.
\end{proposition}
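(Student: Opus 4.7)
The plan is to establish a bijection between bipartite non-crossing trees on $n$ vertices and a set $\mathscr{F}_{m,3}$ (for an appropriate shift of index $m$ making the cardinalities match) via the truncated dual construction of Definition~\ref{def:dualgraph}, then invoke Corollary~\ref{cor:DAGCount}. Proposition~\ref{prop:PropertiesOfDual} already guarantees that the map $G \mapsto G^{*}$ sends DAGs in the relevant $\mathscr{F}_{m,3}$ to bipartite non-crossing trees with the correct vertex count, so most of the well-definedness is already in place.

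For injectivity, I would argue that the strongly planar dual operation is reversible on the DAGs at hand. Using the explicit strongly planar embedding of each $G$ constructed in the proof of Proposition~\ref{prop:planar}, the truncated dual $G^{*}$ together with the canonical re-insertion of a vertex for the infinite face recovers the full dual graph $\widehat G$; dualizing once more then recovers $G$ itself. Hence different DAGs in $\mathscr{F}_{m,3}$ yield non-isomorphic truncated duals.

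For surjectivity, given a bipartite non-crossing tree $T$ on $n$ vertices, I would construct a candidate DAG $G_T$ by adding a single outer vertex representing the infinite face, joining it to all faces on the tree's boundary, and taking the planar dual of the resulting graph. The critical step is verifying that $G_T$ lies in the appropriate $\mathscr{F}_{m,3}$, that is, that $G_T$ has the degree sequence $(k,2,\ldots,2,0)$ and $(0,2,\ldots,2,k)$ with $k = 3$ at the source and sink; equivalently, the faces of $T$ together with the infinite face must have the correct degrees. This will follow from a direct combinatorial analysis of bipartite non-crossing trees using the tree identity $|E(T)| = |V(T)| - 1$ and the observation that each bounded face of the augmented graph must be bounded by exactly four edges.

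The main obstacle is this last degree-verification step in the surjectivity argument: one must show that every bipartite non-crossing tree, not merely those that obviously come from a DAG, gives rise to a DAG satisfying all the full-degree constraints. Once that is in place, combining the bijection with Corollary~\ref{cor:DAGCount} yields $2^{n-2}$.
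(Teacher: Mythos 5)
Your proposed approach runs into a fundamental obstruction that the paper itself later makes explicit. Proposition~\ref{prop:BNTrealizable} shows that \emph{not} every bipartite non-crossing tree arises as the truncated dual of a DAG in $\mathscr{F}_{m,3}$: of the bipartite non-crossing trees on $m+1$ vertices (of which there are $2^{m-1}$ once the present proposition is in hand), only the $2^{m-2}$ whose left-most rank-$0$ vertex has degree $1$ are realizable. The obstruction is forced by the degree structure of $\mathscr{F}_{m,3}$: any $G\in\mathscr{F}_{m,3}$ has a double edge from $1$ to $2$ (since $\ind_2=2$), and this forces a degree-$1$ vertex in the truncated dual, as the proof of Proposition~\ref{prop:BNTrealizable} makes precise. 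So the map $G\mapsto G^{*}$ is injective but not surjective onto bipartite non-crossing trees, and your surjectivity step --- "every bipartite non-crossing tree gives rise to a DAG satisfying all the full-degree constraints" --- fails; in particular your claim that each bounded face of the augmented graph is bounded by exactly four edges is not a theorem about arbitrary bipartite non-crossing trees.

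There is also an arithmetic mismatch and a circularity. Since $G\in\mathscr{F}_{m,3}$ yields a tree on $m+1$ vertices (Proposition~\ref{prop:PropertiesOfDual}), matching trees on $n$ vertices would require $m=n-1$, but then $|\mathscr{F}_{n-1,3}|=2^{n-3}$, exactly half the claimed count --- consistent with the fact that the truncated-dual image is a proper half of all such trees. And the paper uses Proposition~\ref{prop:BNTcount} as an input when establishing Proposition~\ref{prop:BNTrealizable}, so basing the count on realizability would argue in a circle. The paper's own proof avoids all of this by invoking a direct bijection (from M\'esz\'aros and Morales) between bipartite non-crossing trees with $l$ left and $r$ right vertices and weak compositions of $r-1$ into $l$ parts, giving $\sum_{l=1}^{n-1}\binom{n-2}{l-1}=2^{n-2}$.
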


\begin{proof}
	Bipartite non-crossing trees with $l$ left vertices and $r$ right vertices are in bijection with weak compositions of $r-1$ into $l$ parts~\cite{meszaros-morales}. 
	The bijection maps a bipartite non-crossing tree with left vertices $\{x_1,\ldots,x_l\}$ and $r$ right vertices to the weak composition $(\deg x_1-1,\ldots, \deg x_l-1)$ of $r-1$.
	It is known that the numbers of such weak compositions is $\displaystyle \binom{l+r-2}{l-1}$.
	So if $n=l+r$, the total number of bipartite non-crossing trees with $n$ vertices is
	\begin{align*}
		\sum_{l=1}^{n-1} \binom{n-2}{l-1} &= \sum_{i=0}^{n-2} \binom{n-2}{i} = 2^{n-2} .
	\end{align*}     
\end{proof}

\begin{proposition} \label{prop:BNTrealizable}
    Of the $2^{n-1}$ bipartite non-crossing trees on $n+1$ vertices, $2^{n-2}$ are realizable as truncated duals of DAGs in $\mathscr{F}_{n,3}$ when using the planar embedding convention described in Proposition~\ref{prop:planar}. 
    Specifically, the bipartite non-crossing trees that are realizable as truncated duals are those whose left-most vertex of rank $0$ has degree $1$. 
\end{proposition}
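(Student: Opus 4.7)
The plan is to combine a cardinality computation with injectivity of the truncated dual map, and match against the known size of $\mathscr{F}_{n,3}$.

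First I would count the bipartite non-crossing trees on $n+1$ vertices whose leftmost rank 0 vertex has degree $1$. Using the bijection invoked in the proof of Proposition~\ref{prop:BNTcount}, a bipartite non-crossing tree with rank 0 vertices $x_1, \ldots, x_l$ (listed left-to-right) and $r$ rank 1 vertices corresponds to the weak composition $(\deg x_1 - 1, \ldots, \deg x_l - 1)$ of $r-1$ into $l$ parts. Requiring $\deg x_1 = 1$ fixes the first part to be $0$, leaving a weak composition of $r-1$ into $l-1$ parts, counted by $\binom{n-2}{l-2}$ when $l+r = n+1$. Summing over $l=2,\ldots,n$ gives
\[
\sum_{l=2}^{n}\binom{n-2}{l-2} \;=\; \sum_{k=0}^{n-2}\binom{n-2}{k} \;=\; 2^{n-2},
\]
which matches $|\mathscr{F}_{n,3}|$ by Corollary~\ref{cor:DAGCount}.

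Next I would verify that for every $G \in \mathscr{F}_{n,3}$ the truncated dual $G^*$ lies in this subset. Using the embedding of Proposition~\ref{prop:planar}, the spine edges $e_1,\ldots,e_n$ lie on a horizontal axis, path $B$ is drawn above, and path $A$ is reflected below. The leftmost bounded face below the spine is enclosed by $e_1 = (1,2)$ together with the initial edge of path $A$, which is a second copy of $(1,2)$ forced by Lemma~\ref{lemma:spinetc}. In the full dual $\widehat{G}$ this face is incident to two dual edges, but the one crossing the initial path-$A$ edge also touches the infinite-face vertex and is therefore deleted when forming $G^*$. Hence the leftmost rank 0 vertex of $G^*$ has degree exactly $1$.

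Finally I would argue injectivity. Given $G^*$ together with the ambient embedding, one reconstructs $G$ as follows: the spine edges $e_1,\ldots,e_n$ produce a zigzag of dual edges in $G^*$ (alternating between rank 0 and rank 1), and the degree sequences of the rank 0 and rank 1 vertices record exactly which consecutive spine edges bound a common face below or above the spine. This nesting data recovers, for each index $i \in [n-2]$, whether $(i+1,i+2)$ is doubled (corresponding to $b_i = 0$) or part of a longer arc (corresponding to $b_i = 1$), and thus recovers the binary string $\bb$ parameterizing $G$ via Theorem~\ref{thm:bijection}. Combining injectivity with the matching cardinality $2^{n-2}$ on both sides completes the proof. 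The main obstacle is this last reconstruction step: while the combinatorial content of $G^*$ evidently encodes the full nesting pattern of $G$, carefully verifying that the degree/adjacency data of $G^*$ determines $\bb$ requires tracking which dual edges come from spine edges versus from the reflected path-$A$ or upper path-$B$ edges.
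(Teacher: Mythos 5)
Your approach is correct, but it genuinely differs from the paper's. The paper establishes the characterization bidirectionally: it proves the forward implication ($\deg x_1 = 1$ whenever $P = G^*$) essentially as you do, and then, given an arbitrary bipartite non-crossing tree $P$ with $\deg x_1 = 1$, it \emph{explicitly constructs} a DAG $G \in \mathscr{F}_{n,3}$ with $G^* = P$ (assigning a downward arc to each rank $0$ vertex and an upward arc to each rank $1$ vertex, with endpoints determined by the edges' positions). The count $2^{n-2}$ is then obtained by a deletion/attachment bijection between the trees with $\deg x_1 = 1$ on $n+1$ vertices and all bipartite non-crossing trees on $n$ vertices. You instead compute $2^{n-2}$ directly from the weak-composition model (correctly, fixing the first part to $0$), prove the forward direction, and then argue injectivity of $G \mapsto G^*$ so that the matching cardinalities force surjectivity. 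The paper's argument is more constructive --- it hands you a preimage --- whereas yours is shorter once injectivity is settled, but you lose the explicit inverse map.

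The ``main obstacle'' you flagged is not actually a gap. As is established in the proof of Proposition~\ref{prop:PropertiesOfDual}, every edge of path $A$ and path $B$ bounds the infinite region, so their dual edges are precisely the ones deleted in passing from $\widehat{G}$ to $G^*$; consequently \emph{every} edge of $G^*$ is dual to a spine edge and there is nothing to disambiguate. With that in hand, a clean way to finish injectivity is to bypass the ``zigzag'' language entirely: the face dual to the rank $0$ vertex $x_j$ (reading rank $0$ vertices left to right) is bounded below by the $j$-th arc of path $A$, say $(a_j, b_j)$, and above by spine edges $e_{a_j}, \ldots, e_{b_j-1}$, so $\deg x_j = b_j - a_j$. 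Since path $A$ is a path $1 = a_1 < b_1 = a_2 < \cdots < b_l = n+1$, the left-degree sequence of $G^*$ determines all the $a_j, b_j$ and hence path $A$; the right-degree sequence determines path $B$ in the same way; together with the spine this reconstructs $E(G)$ and hence $G$ (equivalently, the binary string $\beta$ via Theorem~\ref{thm:bijection}).
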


\begin{proof}
	Let $P$ be a bipartite non-crossing tree on $n+1$ vertices. 
	Label its rank $0$ elements as $\{x_1,\ldots, x_l \}$ and its rank $1$ elements as $\{y_1,\ldots, y_r \}$, with subscripts increasing from left to right. 
	Then the condition we prove is necessary and sufficient for realizability is $\deg x_1=1$. 
	
	Suppose $P$ is realized as the truncated dual of a DAG $G\in\mathscr{F}_{n,3}$, where $G$ is drawn using the conventions in Proposition~\ref{prop:planar}. 
	Note that $G$ has the edges $e_1$, $e_2$, $(1,2)$, $(2,j)$ and $(1,k)$, where $j,k\geq 3$. 
	Using the edge partition from the proof of Proposition~\ref{prop:planar}, this means $(1,2)$ and $(2,j)$ are in path $A$ and $(1,k)$ is in path $B$. 
	Then our drawing conventions give us the following regions in $G$: 
	\begin{enumerate}
		\item[(i)] The region bounded by $e_1$ and $(1,2)$, below the spine;
		\item[(ii)] The region bounded by $e_2,\ldots,e_{k-1}$ and $(2,j)$, below the spine; and
		\item[(iii)] The region bounded by $e_1,\ldots,e_{j-1}$ and $(1,j)$, above the spine.
                \end{enumerate}
                
	Taking the truncated dual of $G$ gives a vertex for each region. 
	The first two regions are rank $0$, so they are labelled $x_1$ and $x_2$.
	The third region is rank $1$, so it is labelled $y_1$. 
	Since the first two regions in $G$ share the edge $e_1$, we have an edge $(x_1,y_1)$ in $P$.
	Since the second two regions in $G$ share the edge $e_2$, we have an edge $(x_2,y_1)$ in $P$.
	Then $P$ cannot have any edge of the form $(x_1,y_i)$ for $i\geq 2$, as it would cross the edge $(x_2,y_1)$ or cross into the unbounded region, which does not correspond to a vertex in the truncated dual.
	Thus $\deg x_1=1$. 
	
	Conversely, suppose $\deg x_1=1$. 
	We construct a DAG $G\in\mathscr{F}_{n,3}$ so that $G^*=P$. 
	Begin with $n+1$ vertices and spine edges $e_1,\ldots, e_n$. 
	Associate each relation in $P$ with a spine edge, going from left to right.
	This is a well-defined association since $P$ is non-crossing. 
	For each rank $0$ element $x_i$ in $P$, let $e_j$ and $e_k$ denote the spine edges associated to the left-most and right-most relations of $x_i$. 
	Then draw the edge $(j,k+1)$ in $G$ as a downward-facing semicircular arc.
	For $x_1$, draw $(1,2)$ and for the right-most rank $0$ vertex $x_l$, draw $(j,n+1)$.
	These edges will form the path $A$ as described in Proposition~\ref{prop:planar}.
	Then for each rank $1$ element $y_i$ in $P$, let $e_j$ and $e_k$ denote the spine edges associated to the left-most and right-most relations of $y_i$. 
	Then draw the edge $(j,k+1)$ in $G$ as a upward-facing semicircular arc.
	For $y_1$, draw $(1,k+1)$ and for the right-most rank $1$ vertex $y_r$, draw $(j,n+1)$.
	These edges will form the path $B$ as described in Proposition~\ref{prop:planar}.
	By construction, we have $G^*=P$. 
	
	Given a bipartite non-crossing tree on $n+1$ vertices where $\deg x_1=1$, delete the vertex $x_1$ and edge $(x_1,y_1)$, which will give us a bipartite non-crossing tree on $n$ vertices. 
	Similarly, given any bipartite non-crossing tree on $n$ vertices, we can add a single vertex $x_0$ and edge $(x_0,y_1)$ to get a bipartite non-crossing tree on $n+1$ vertices where the left-most rank $0$ vertex has degree 1. 
	Thus bipartite non-crossing trees on $n+1$ vertices that are realizable as truncated duals of DAGs in $\mathscr{F}_{n,3}$ are in bijection with bipartite non-crossing trees on $n$ vertices.
	It follows from Proposition~\ref{prop:BNTcount} that there are $2^{n-2}$ such bipartite non-crossing trees. 
\end{proof}

In Section~\ref{sec:boolean}, we described a bijection between length $n-2$ binary strings and DAGs in $\mathscr{F}_{n,3}$. 
In this section, we describe (truncated) dualization, which is a bijection between DAGs in $\mathscr{F}_{n,3}$ and bipartite non-crossing trees on $n+1$ vertices whose left-most vertex of rank $0$ has degree $1$. 
This implies the existence of a composite bijection between length $n-2$ binary strings and bipartite non-crossing trees on $n+1$ vertices whose left-most vertex of rank $0$ has degree $1$. 
We now describe this bijection, and leave it as a straightforward exercise to verify that the bijection is correct. 

\begin{proposition} \label{prop:StringPosetBijection}
	Given a length $n-2$ binary string $\b$, we can construct a corresponding bipartite non-crossing tree $T_\b$ with $n+1$ vertices whose left-most vertex of rank $0$ has degree $1$ in the following way:
	\begin{enumerate}
		\item[(i)] Begin with an initial edge $e_0$ from rank $0$ to rank $1$. 
		\item[(ii)] From the rank $1$ vertex incident to $e_0$, construct a path of edges where each edge in the path corresponds to a $1$ in $\b$. 
		\item[(iii)] Attach a terminal edge after the final $1$-corresponding edge. 
		\item[(iv)] For each $0$ in $\b$, find the $1$s that surround it and attach an edge between the corresponding path edges in the poset. 
	\end{enumerate}
	
	Conversely, given a bipartite non-crossing tree $T$ with $n+1$ vertices whose left-most vertex of rank $0$ has degree $1$, we can construct a corresponding length $n-2$ binary string $\b_T$ in the following way:
	\begin{enumerate}
		\item[(i)] Trace a path from the left-most vertex of rank $0$ as far right as possible. 
		\item[(ii)] Ignoring the first and last edge in this path, write a $1$ in $\b_T$ for any remaining edges in the path. 
		\item[(iii)] For any edges not in the path, we place a $0$ in $\b_T$ between the $1$s that correspond to the path edges. 
	\end{enumerate}
\end{proposition}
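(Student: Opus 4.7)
The plan is to deduce the proposition from the composite of two bijections already in hand: the bijection $\lambda:(\Z/2\Z)^{n-2}\to\mathscr{F}_{n,3}$ from Theorem~\ref{thm:bijection}, and the truncated-dualization bijection from Proposition~\ref{prop:BNTrealizable} between $\mathscr{F}_{n,3}$ and the set of bipartite non-crossing trees on $n+1$ vertices whose leftmost rank-$0$ vertex has degree $1$. Both sides have cardinality $2^{n-2}$ by Corollary~\ref{cor:DAGCount}, so it suffices to check that the construction $\b\mapsto T_\b$ in parts (i)--(iv) coincides with the composite $\b\mapsto (G_\b)^*$ computed under the planar embedding of Proposition~\ref{prop:planar}; the inverse description in parts (i)--(iii) then follows by reading the same identification backwards.

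To verify the identification, I would first fix the embedding from Proposition~\ref{prop:planar}: the spine drawn horizontally, path $A$ (starting with the duplicate $(1,2)$) as lower semicircular arcs, and path $B$ (starting with the other non-spine edge leaving vertex $1$) as upper semicircular arcs. Under this embedding, rank-$0$ vertices of $(G_\b)^*$ correspond to edges of path $A$, rank-$1$ vertices to edges of path $B$, and dual edges to spine edges of $G_\b$. Using the explicit edge list from Theorem~\ref{thm:bijection}, I would then identify the backbone of $T_\b$ with the sequence of faces of $G_\b$ bounded by the duplicate $(1,2)$, the $r+1$ many $1$-type edges $(j_{k-1}+1,j_k+2)$ in their natural order, and the duplicate $(n,n+1)$. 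Consecutive backbone faces share precisely one spine edge $e_{j_k+1}$, which becomes the backbone tree-edge indexed by the $k$-th $1$ in $\b$; this matches steps (i)--(iii), and the bipartite sides align because consecutive $1$-type edges begin on opposite sides of the spine.

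Finally, for each $0$ at position $l$, Lemma~\ref{lemma:0interchangeable} guarantees that the $0$-type edge $(l+1,l+2)$ is nested inside the unique $1$-type edge covering vertex $l+1$; consequently the face it bounds shares a spine edge only with the face bounded by that covering $1$-type edge. In dual terms, $e_{l+1}$ attaches a fresh leaf to the backbone vertex representing the surrounding $1$, which is exactly step (iv). The main obstacle is purely bookkeeping: tracking which $1$-type edges lie on path $A$ versus path $B$ (they alternate because their starting vertices do), so that each $0$-type edge attaches as a leaf of the correct rank to the correct backbone vertex. Once this correspondence is in place, the forward map is verified; the inverse description in (i)--(iii) is then the read-back of this same dictionary, where tracing the rightward path from the leftmost degree-$1$ rank-$0$ vertex recovers the backbone and thus the positions of the $1$s, while the remaining leaves off this backbone record the $0$s in their corresponding intervals.
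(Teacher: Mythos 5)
Your proposal takes the same route the paper does (indeed, the paper explicitly says Proposition~\ref{prop:StringPosetBijection} arises as the composite of the bijection $\lambda$ from Theorem~\ref{thm:bijection} and truncated dualization from Propositions~\ref{prop:PropertiesOfDual} and~\ref{prop:BNTrealizable}, and then ``leaves it as a straightforward exercise to verify that the bijection is correct''). Your argument simply carries out that exercise, correctly matching the backbone and leaf structure of $T_\b$ to the faces of the strongly planar embedding of $G_\b$, so it is the paper's approach with the omitted verification filled in.
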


\begin{example} \label{ex:StringPosetBijection}
The binary string $010010110010$ corresponds to the rank $1$ poset in Figure~\ref{fig:StringPosetBijection}. 
Note how the $1$s correspond to the maximal path (dashed edges), without the initial and terminal edges (dotted edges), and the $0$s correspond to the non-path edges (solid edges).
\end{example}

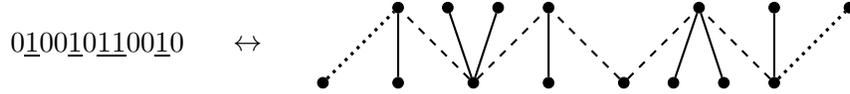
\begin{figure}
\begin{center}
\begin{tikzpicture}

\begin{scope}[scale=1, xshift=0, yshift=0]

        \node[] at (-3,0.5) {$0\underline{1}00\underline{1}0\underline{1}\underline{1}00\underline{1}0$};
        \node[] at (-1,0.5) {$\leftrightarrow$};
        \vertex[fill](a1) at (0,0) {};
        \vertex[fill](a2) at (1,0) {};
        \vertex[fill](a3) at (2,0) {};
        \vertex[fill](a4) at (3,0) {};
        \vertex[fill](a5) at (4,0) {};
        \vertex[fill](a6) at (4.66,0) {};
        \vertex[fill](a7) at (5.33,0) {};
        \vertex[fill](a8) at (6,0) {};
        \vertex[fill](b1) at (1,1) {};
        \vertex[fill](b2) at (1.66,1) {};
        \vertex[fill](b3) at (2.33,1) {};
        \vertex[fill](b4) at (3,1) {};
        \vertex[fill](b5) at (5,1) {};
        \vertex[fill](b6) at (6,1) {};
        \vertex[fill](b7) at (7,1) {};
        \draw[dotted, very thick] (a1) to (b1);
        \draw[thick] (a2) to (b1);
        \draw[dashed, thick] (a3) to (b1);
        \draw[thick] (a3) to (b2);
        \draw[thick] (a3) to (b3);
        \draw[dashed, thick] (a3) to (b4);
        \draw[thick] (a4) to (b4);
        \draw[dashed, thick] (a5) to (b4);
        \draw[dashed, thick] (a5) to (b5);
        \draw[thick] (a6) to (b5);
        \draw[thick] (a7) to (b5);
        \draw[dashed, thick] (a8) to (b5);
        \draw[thick] (a8) to (b6);
        \draw[dotted, very thick] (a8) to (b7);
	
\end{scope}
\end{tikzpicture}
\end{center}
\caption{The corresponding binary string and bipartite non-crossing tree of Example~\ref{ex:StringPosetBijection}.}
\label{fig:StringPosetBijection}
\end{figure}

Next we describe how changing a $0$ to a $1$ affects the posets structure. 

\begin{definition} \label{def:flip}
	Let $T$ be a bipartite non-crossing tree on $n+1$ vertices whose left-most vertex of rank $0$ has degree $1$.
	Let $P$ be the path beginning at the left-most vertex of $T$ and ending at the right-most vertex. 
	For an edge $e_i$ not in the path $P$, we define a \emph{flip at edge $e_i$} to be the operation that obtains a new bipartite non-crossing tree on $n+1$ vertices whose left-most vertex of rank $0$ has degree $1$, $T'$, in the following way:
	\begin{enumerate}
		\item[(i)] Split $T$ into two connected components, the left ending with edge $e_i$ and the right beginning after edge $e_i$. 
		\item[(ii)] Take the (poset) dual of the right component.
		\item[(iii)] Reattach the right component to the left component to create a new tree $T'$ where $e_i$ is in the path beginning at the left-most vertex of $T'$ and ending at the right-most vertex. 
	\end{enumerate}
\end{definition}

\begin{example} \label{ex:flip}
	In Figure~\ref{fig:PosetInterchange}, we perform a flip at the fifth edge from the left. 
	First, we see our initial poset.
	Next, we separate it into two pieces after edge five. 
	Then we invert the right component. 
	Finally, we reattach the inverted piece so that edge five is in the path from left-most vertex to the right-most vertex.
\end{example}

The following proposition follows immediately from the bijection.

\begin{proposition} \label{prop:PosetInterchange} 
	Let $\b$ be a length $n-2$ binary string and let $T$ be the corresponding poset as described in Proposition~\ref{prop:StringPosetBijection}. 
	Then changing a $0$ in $\b$ to a $1$ is equivalent to performing a flip at the edge of $T$ that corresponds to the $0$ of $\b$. 
\end{proposition}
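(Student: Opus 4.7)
The plan is to verify the equivalence directly by tracking how the bijection of Proposition~\ref{prop:StringPosetBijection} interacts with the flip operation of Definition~\ref{def:flip}, exploiting the fact that both constructions are local.

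First I would fix $\b \in (\Z/2\Z)^{n-2}$ with $b_k = 0$, and let $\b'$ denote the string obtained by setting $b_k := 1$. Let $T = T_\b$ and $T' = T_{\b'}$ be the corresponding bipartite non-crossing trees, and let $e_i$ denote the non-path edge of $T$ corresponding to the entry $b_k$. I would decompose the strings as $\b = \b_L\, 0\, \b_R$ and $\b' = \b_L\, 1\, \b_R$, and observe that the portion of the bijective construction determined by reading $\b_L$ (together with the initial dotted edge) produces identical subtrees in $T$ and $T'$. This identifies the left component of the flip with the common left portion of both trees, and places the edge $e_i$ at the boundary.

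Next I would analyze how $\b_R$ is encoded in $T$ versus $T'$. The main path built by the bijection alternates between rising and falling edges, and each $0$-type non-path edge connects a particular pair of consecutive path edges determined by the position of the $0$ relative to the surrounding $1$s. Changing $b_k$ from $0$ to $1$ inserts one additional path edge before $\b_R$ is processed, which flips the rank-parity of every subsequent path edge. This rank-parity flip is precisely the poset-dual operation on the right subtree: rank-$0$ and rank-$1$ vertices trade roles, each path edge reverses the ranks of its endpoints, and every non-path edge continues to connect the same two consecutive path edges, just with swapped ranks. Hence the right component of $T'$ is the poset dual of the right component of $T$.

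Finally I would verify the reattachment step. In $T$ the edge $e_i$ is a non-path edge joining the rightmost path vertex of the left component to the leftmost path vertex of the right component, while in $T'$ the same $e_i$ becomes a path edge joining those same endpoints (with the right side's ranks now inverted). This matches step~(iii) of Definition~\ref{def:flip}, which reattaches the dualized right component so that $e_i$ becomes part of the main left-to-right path.

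The main obstacle is purely bookkeeping: one must carefully align the indexing of $0$s and $1$s in $\b$ with the labeling of path and non-path edges in $T$, and confirm that the rank-parity flip on the right portion genuinely coincides with the poset-dual in the sense of Definition~\ref{def:flip}. Once the correspondence between bit positions and edges is pinned down, the proposition reduces to a local check and follows immediately from the structure of the bijection, as the excerpt asserts.
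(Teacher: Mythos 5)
Your proposal is correct and is exactly the verification the paper has in mind; the paper itself offers no argument beyond the remark that the proposition ``follows immediately from the bijection,'' so your sketch is more explicit than the source. The key observations you make --- that the $\b_L$ prefix produces identical left subtrees in $T_\b$ and $T_{\b'}$, that inserting one path edge flips the rank parity of everything processed afterwards and that this is precisely the poset dual on the right component, and that the reattachment step of Definition~\ref{def:flip} then glues the pendant endpoint of $e_i$ to the dualized right component so that $e_i$ becomes a path edge --- are the right ones. One wording imprecision worth fixing: a $0$-type edge does not ``connect two consecutive path edges''; it joins the path vertex shared by those two consecutive path edges to a new pendant vertex of the opposite rank. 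The parity-flip argument should be applied to that pendant and its attachment vertex (and, for $0$s lying strictly between the changed position $k$ and the next $1$, the attachment vertex itself migrates from the old shared vertex to the former pendant of $e_i$, which is exactly what the reattachment accounts for); with that adjustment, your identification of the rank-parity flip with the poset dual is correct.
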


\begin{example} \label{ex:PosetInterchange}
In Figure~\ref{fig:PosetInterchange}, we see that the flip from Example~\ref{ex:flip} corresponds to changing $b_4$ from $0$ to $1$ in the binary string $010010110010$. 
\end{example}

\begin{figure}
\begin{center}
\begin{tikzpicture}

\begin{scope}[scale=1, xshift=0, yshift=150]

        \node[] at (-3,0.5) {$0\underline{1}00\underline{1}0\underline{11}00\underline{1}0$};
        \node[] at (-1,0.5) {$\leftrightarrow$};
        \vertex[fill](a1) at (0,0) {};
        \vertex[fill](a2) at (1,0) {};
        \vertex[fill](a3) at (2,0) {};
        \vertex[fill](a4) at (3,0) {};
        \vertex[fill](a5) at (4,0) {};
        \vertex[fill](a6) at (4.66,0) {};
        \vertex[fill](a7) at (5.33,0) {};
        \vertex[fill](a8) at (6,0) {};
        \vertex[fill](b1) at (1,1) {};
        \vertex[fill](b2) at (1.66,1) {};
        \vertex[fill](b3) at (2.33,1) {};
        \vertex[fill](b4) at (3,1) {};
        \vertex[fill](b5) at (5,1) {};
        \vertex[fill](b6) at (6,1) {};
        \vertex[fill](b7) at (7,1) {};
        \draw[dotted, thick] (a1) to (b1);
        \draw[  thick] (a2) to (b1);
        \draw[dashed, thick] (a3) to (b1);
        \draw[  thick] (a3) to (b2);
        \draw[  thick] (a3) to (b3);
        \draw[dashed, thick] (a3) to (b4);
        \draw[  thick] (a4) to (b4);
        \draw[dashed, thick] (a5) to (b4);
        \draw[dashed, thick] (a5) to (b5);
        \draw[  thick] (a6) to (b5);
        \draw[  thick] (a7) to (b5);
        \draw[dashed, thick] (a8) to (b5);
        \draw[  thick] (a8) to (b6);
        \draw[dotted, thick] (a8) to (b7);
	
\end{scope}

\begin{scope}[scale=1, xshift=0, yshift=100]

        \vertex[fill](a1) at (0,0) {};
        \vertex[fill](a2) at (1,0) {};
        \vertex[fill](a3) at (2,0) {};
        \vertex[fill](b1) at (1,1) {};
        \vertex[fill](b2) at (1.66,1) {};
        \vertex[fill](b3) at (2.33,1) {};
        \draw[  thick] (a3) to (b3);
        \draw[dotted, thick] (a1) to (b1);
        \draw[  thick] (a2) to (b1);
        \draw[dashed, thick] (a3) to (b1);
        \draw[  thick] (a3) to (b2);
	
\end{scope}
\begin{scope}[scale=1, xshift=60, yshift=100]

        \vertex[fill](a3) at (2,0) {};
        \vertex[fill](a4) at (3,0) {};
        \vertex[fill](a5) at (4,0) {};
        \vertex[fill](a6) at (4.66,0) {};
        \vertex[fill](a7) at (5.33,0) {};
        \vertex[fill](a8) at (6,0) {};
        \vertex[fill](b4) at (3,1) {};
        \vertex[fill](b5) at (5,1) {};
        \vertex[fill](b6) at (6,1) {};
        \vertex[fill](b7) at (7,1) {};
        \draw[dashed, thick] (a3) to (b4);
        \draw[  thick] (a4) to (b4);
        \draw[dashed, thick] (a5) to (b4);
        \draw[dashed, thick] (a5) to (b5);
        \draw[  thick] (a6) to (b5);
        \draw[  thick] (a7) to (b5);
        \draw[dashed, thick] (a8) to (b5);
        \draw[  thick] (a8) to (b6);
        \draw[dotted, thick] (a8) to (b7);
	
\end{scope}

\begin{scope}[scale=1, xshift=0, yshift=50]

        \vertex[fill](a1) at (0,0) {};
        \vertex[fill](a2) at (1,0) {};
        \vertex[fill](a3) at (2,0) {};
        \vertex[fill](b1) at (1,1) {};
        \vertex[fill](b2) at (1.66,1) {};
        \vertex[fill](b3) at (3,1) {};
        \draw[dotted, thick] (a1) to (b1);
        \draw[  thick] (a2) to (b1);
        \draw[dashed, thick] (a3) to (b1);
        \draw[  thick] (a3) to (b2);
        \draw[dashed, thick] (a3) to (b3);
	
\end{scope}
\begin{scope}[scale=1, xshift=60, yshift=50]

        \vertex[fill](a3) at (2,1) {};
        \vertex[fill](a4) at (3,1) {};
        \vertex[fill](a5) at (4,1) {};
        \vertex[fill](a6) at (4.66,1) {};
        \vertex[fill](a7) at (5.33,1) {};
        \vertex[fill](a8) at (6,1) {};
        \vertex[fill](b4) at (3,0) {};
        \vertex[fill](b5) at (5,0) {};
        \vertex[fill](b6) at (6,0) {};
        \vertex[fill](b7) at (7,0) {};
        \draw[dashed, thick] (a3) to (b4);
        \draw[  thick] (a4) to (b4);
        \draw[dashed, thick] (a5) to (b4);
        \draw[dashed, thick] (a5) to (b5);
        \draw[  thick] (a6) to (b5);
        \draw[  thick] (a7) to (b5);
        \draw[dashed, thick] (a8) to (b5);
        \draw[  thick] (a8) to (b6);
        \draw[dotted, thick] (a8) to (b7);
	
\end{scope}

\begin{scope}[scale=1, xshift=0, yshift=0]

        \node[] at (-3,0.5) {$0\underline{1}0\underline{11}0\underline{11}00\underline{1}0$};
        \node[] at (-1,0.5) {$\leftrightarrow$}; 
        \vertex[fill](a1) at (0,0) {};
        \vertex[fill](a2) at (1,0) {};
        \vertex[fill](a3) at (2,0) {};
        \vertex[fill](b4) at (4,0) {};
        \vertex[fill](b5) at (6,0) {};
        \vertex[fill](b6) at (7,0) {};
        \vertex[fill](b7) at (8,0) {};
        \vertex[fill](b1) at (1,1) {};
        \vertex[fill](b2) at (2,1) {};
        \vertex[fill](b3) at (3,1) {};
        \vertex[fill](a4) at (4,1) {};
        \vertex[fill](a5) at (5,1) {};
        \vertex[fill](a6) at (5.66,1) {};
        \vertex[fill](a7) at (6.33,1) {};
        \vertex[fill](a8) at (7,1) {};
        \draw[dotted, thick] (a1) to (b1);
        \draw[  thick] (a2) to (b1);
        \draw[dashed, thick] (a3) to (b1);
        \draw[  thick] (a3) to (b2);
        \draw[dashed, thick] (a3) to (b3);
        \draw[dashed, thick] (b3) to (b4);
        \draw[  thick] (a4) to (b4);
        \draw[dashed, thick] (a5) to (b4);
        \draw[dashed, thick] (a5) to (b5);
        \draw[  thick] (a6) to (b5);
        \draw[  thick] (a7) to (b5);
        \draw[dashed, thick] (a8) to (b5);
        \draw[  thick] (a8) to (b6);
        \draw[dotted, thick] (a8) to (b7);
	
\end{scope}

\end{tikzpicture}
\end{center}
\caption{The poset operation corresponding to changing $b_4$ to a $1$ in the binary string $010010110010$.}\label{fig:PosetInterchange}
\end{figure}
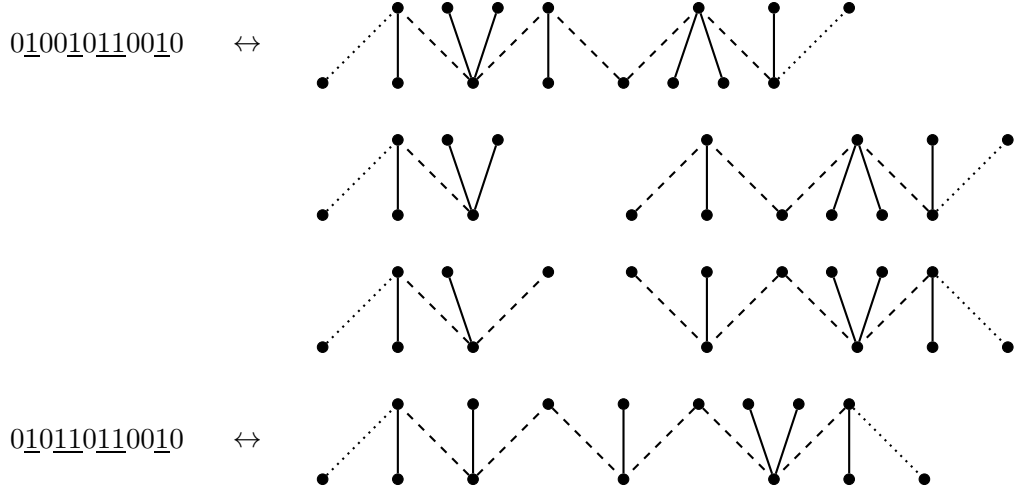

\begin{corollary} \label{cor:MainThmLinearExt}
    Let $T$ be a bipartite non-crossing tree poset on $n+1$ vertices whose left-most vertex of rank $0$ has degree $1$. 
    Let $P$ be the path beginning at the left-most vertex of $T$ and ending at the right-most vertex.
     If we perform a flip at an edge of $T$ not in $P$ to obtain a new tree $T'$, then
    \[
    	e(T')\leq e(T).
    \]
    \end{corollary}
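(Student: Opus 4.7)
The plan is to assemble the corollary from the machinery already in place: translate the poset $T$ to a DAG $G \in \mathscr{F}_{n,3}$ via truncated dualization, translate the flip operation on $T$ to an edge interchange on $G$ via the binary string encoding, and then chain together the three equivalences (volume of flow polytope, volume of order polytope, number of linear extensions).

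First, by Proposition~\ref{prop:BNTrealizable}, the bipartite non-crossing trees on $n+1$ vertices whose left-most rank $0$ vertex has degree $1$ are exactly the truncated duals of DAGs in $\mathscr{F}_{n,3}$ (using the planar embedding from Proposition~\ref{prop:planar}). So I would choose $G \in \mathscr{F}_{n,3}$ with $G^* = T$, and let $\b$ be the length $n-2$ binary string corresponding to $G$ under the bijection $\lambda$ of Theorem~\ref{thm:bijection}. Under the composite bijection of Proposition~\ref{prop:StringPosetBijection}, $\b$ is also the binary string encoding $T$.

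Next I would argue that the flip operation on $T$ at a non-path edge corresponds, via dualization, to an edge interchange on $G$. By Proposition~\ref{prop:PosetInterchange}, performing a flip at an edge of $T$ not in the maximal path $P$ is the same as changing the corresponding $0$ in $\b$ to a $1$, producing a new string $\b'$ and new tree $T' = T_{\b'}$. On the DAG side, Lemma~\ref{lemma:interchange=0to1} says changing that same $0$ to a $1$ corresponds to an interchange operation on a nested pair of edges in $G$, producing $G' = G_{\b'}$. By Proposition~\ref{prop:BNTrealizable} applied to $G'$, we have $(G')^* = T'$.

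Finally, I would combine the three equivalences. By Proposition~\ref{prop:planar}, both $G$ and $G'$ are strongly planar, so Theorem~\ref{thm:flowandorderequivalence} gives $\vol \F_1(G) = \vol \mathcal{O}(T)$ and $\vol \F_1(G') = \vol \mathcal{O}(T')$. Theorem~\ref{thm:VolumeCountsLinearExt} then yields $e(T) = \vol \mathcal{O}(T)$ and $e(T') = \vol \mathcal{O}(T')$. By Theorem~\ref{thm:MainThm}, $\vol \F_1(G') \leq \vol \F_1(G)$, so
\[
e(T') \;=\; \vol \mathcal{O}(T') \;=\; \vol \F_1(G') \;\leq\; \vol \F_1(G) \;=\; \vol \mathcal{O}(T) \;=\; e(T).
\]

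The only step requiring care is verifying that a flip on a non-path edge of $T$ really does correspond to an interchange on a nested pair in $G$ under truncated dualization; but this is a book-keeping check against the explicit constructions in Proposition~\ref{prop:StringPosetBijection} and Lemma~\ref{lemma:interchange=0to1}, not a new argument. Everything else is a direct invocation of previously established results.
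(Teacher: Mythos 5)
Your proposal is correct and assembles exactly the chain of equivalences the paper intends for this corollary (which the paper states without explicit proof): Proposition~\ref{prop:BNTrealizable} gives the DAG $G$ with $G^*=T$, Proposition~\ref{prop:PosetInterchange} and Lemma~\ref{lemma:interchange=0to1} identify the flip on $T$ with the interchange on $G$ via the shared binary-string encoding, Proposition~\ref{prop:planar} supplies strong planarity, Theorems~\ref{thm:flowandorderequivalence} and~\ref{thm:VolumeCountsLinearExt} convert flow-polytope volume to linear extensions, and Theorem~\ref{thm:MainThm} provides the inequality. The one place you correctly flag as needing care — that truncated dualization of $G_{\b'}$ produces the flipped tree $T'$ — is exactly the ``straightforward exercise'' the paper itself leaves to the reader in Proposition~\ref{prop:StringPosetBijection}, so you are on solid ground deferring to it.
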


\bibliographystyle{plain}
\bibliography{refs}

\end{document}